\newcommand{\N}{\mathbb{N}}
\newcommand{\Z}{\mathbb{Z}}
\newcommand{\nc}{\newcommand}
\nc{\look}{\marginpar{$\bullet$}}
\nc{\FO}{\ensuremath{\mathsf{FO}}\xspace}
\nc{\biFO}{\ensuremath{\mathsf{FO/\!\!\sim}}\xspace}
\nc{\MSO}{\ensuremath{\mathsf{MSO}}\xspace}
\nc{\MPL}{\ensuremath{\mathsf{MPL}}\xspace}
\nc{\Lmu}{\ensuremath{\mathsf{L}_\mu}\xspace}
\nc{\CTL}{\ensuremath{\mathsf{CTL}^*}\xspace}
\nc{\LTL}{\ensuremath{\mathsf{LTL}}\xspace}
\nc{\PDL}{\ensuremath{\mathsf{PDL}}\xspace}
\nc{\WCL}{\ensuremath{\mathsf{WCL}}\xspace}
\nc{\ML}{\ensuremath{\mathsf{ML}}\xspace}
\nc{\gML}{\ensuremath{\mathsf{ML}^\forall}\xspace}
\nc{\CK}{\ensuremath{\mathsf{CK}}\xspace}
\nc{\MLCK}{\ensuremath{\mathsf{ML}[\mathsf{CK}]}\xspace}
\nc{\MLPA}{\ensuremath{\mathsf{ML}[\mathsf{PA}]}\xspace}
\nc{\MLCP}{\ensuremath{\mathsf{ML}[\mathsf{CK,PA}]}\xspace}
\nc{\RC}{\ensuremath{\mathsf{RC}}\xspace}
\nc{\Sfive}{\ensuremath{\mathsf{S5}}\xspace}
\nc{\RCs}{\ensuremath{\mathsf{RC^\sim}}\xspace}
\nc{\RCPA}{\ensuremath{\mathsf{RC}[\mathsf{PA}]}\xspace}
\nc{\FOCK}{\ensuremath{\mathsf{FO}[\mathsf{CK}]}\xspace}
\nc{\FORC}{\ensuremath{\mathsf{FO}[\mathsf{RC}]}\xspace}
\nc{\FORCs}{\ensuremath{\mathsf{FO}[\mathsf{RC^\sim}]}\xspace}
\nc{\EF}{Ehrenfeucht-Fra\"iss\'e\xspace}
\newcommand{\ca}{Cayley\xspace}
\newcommand{\sh}{\mathrm{short}}
\newcommand{\gen}{\mathrm{gen}}
\nc{\str}[1]{{\mathfrak{#1}}}
\nc{\restr}{\!\restriction\!}
\nc{\G}{\mathbb{G}}
\nc{\HH}{\mathbb{H}}
\nc{\VV}{\mathbb{V}}
\nc{\abar}{\mathbf{a}}
\nc{\bbar}{\mathbf{b}}
\nc{\cbar}{\mathbf{c}}
\nc{\xbar}{\mathbf{x}}
\nc{\ybar}{\mathbf{y}}
\nc{\zbar}{\mathbf{z}}
\nc{\ubar}{\mathbf{u}}
\nc{\sbar}{\mathbf{s}}
\nc{\tbar}{\mathbf{t}}
\nc{\vbar}{\mathbf{v}}
\nc{\wbar}{\mathbf{w}}
\nc{\ellp}{{\ell+1}}
\nc{\ellm}{{\ell-1}}
\nc{\kp}{{k+1}}
\nc{\km}{{k-1}}
\nc{\jp}{{j+1}}
\nc{\jm}{{j-1}}
\nc{\ip}{{i+1}}
\nc{\im}{{i-1}}
\nc{\brck}[1]{[\![ #1 ]\!]}
\nc{\PI}{\ensuremath{\mbox{\textbf{I}}}\xspace}
\nc{\PII}{\ensuremath{\mbox{\textbf{II}}}\xspace}
\newtheorem{theorem}{Theorem}[section]
\newtheorem*{theorem*}{Theorem}
\newtheorem{corollary}[theorem]{Corollary}
\newtheorem{lemma}[theorem]{Lemma}
\newtheorem{pprov}[theorem]{Proviso}
\theoremstyle{definition}
\newtheorem{definition}[theorem]{Definition}
\newtheorem{example}[theorem]{Example}
\newtheorem{prop}[theorem]{Proposition}
\newtheorem{remark}[theorem]{Remark}
\newtheorem{obs}[theorem]{Observation}
\theoremstyle{remark}
\begin{document}

\title{Some Structure Theory for Cayley Graphs and Associated Hypergraphs}
\author{Felix Canavoi\\
Department of Mathematics\\
Technische Universit\"at Darmstadt}
\maketitle

\begin{abstract}
We expand the structure theory of finite \ca graphs
that avoid specific cyclic coset patterns.
A focus lies on the exploration of duality in
related structures and associated hypergraphs, especially
applied to the local analysis of paths and cycles.
We present several characterisations of local tree-likeness
for these structures and show a close connection
to $\alpha$-acyclicity of hypergraphs.
\end{abstract}

\tableofcontents

\section{Introduction}\label{sec:intro}

Acyclic, discrete structures play a significant role
in computer science. Many algorithmic graph problems that
are hard in general become tractable for trees.
These efficient
algorithms can be further adapted to larger classes
of graphs, like graphs of bounded tree-width
\cite{Co90}, \cite{Bo93}, \cite{RS12}.
These are graphs that are not necessarily trees,
but still structurally simple and in some sense
tree-like. Generalizing from graphs to hypergraphs,
there are several different notions of acyclicity,
like $\gamma$-, $\beta$- and $\alpha$-acyclicity
and tree decomposability,
that admit many different characterisations and
find applications in database theory,
constraint satisfaction problems
and finite model theory
\cite{BeeriFaginetal}, \cite{DP89}, \cite{OPS13},
\cite{DKV02}, \cite{GM99}, \cite{BDG07}.

This work focuses on the notion of coset acyclicity
for \ca graphs, a class of graphs that plays an important
role in discrete mathematics~\cite{Bi93}, \cite{CG96}, \cite{Li02},
combinatorics~\cite{Ba79},
information theory~\cite{KRY09}, coding theory~\cite{MBG09}, \cite{DEL21},
and network theory~\cite{He97}, \cite{LJD93}
among other fields of mathematics.
Coset acyclicity was in introduced
in~\cite{Otto12JACM} to construct certain finite hypergraph
coverings that have an arbitrarily high degree of
$\alpha$-acyclicity for a model-theoretic
characterisation theorem in the vein of the
van Benthem-Rosen theorem \cite{Benthem83}, \cite{Rosen}.
In~\cite{CO17},~\cite{Ca18} and~\cite{CO21} coset acyclic \ca graphs were used
to cover transition systems directly in order
to prove further model-theoretic characterisation
theorems.
These results build upon the work in~\cite{DawarOttoAPAL09}
and further develop the model-theoretic techniques
that were used there.
The current work presents the graph structure theory,
which was essentially used
in \cite{CO17}, \cite{Ca18}, \cite{CO21},
in greater detail and in a self-contained
manner that makes it accessible for wider ranging applications
beyond model-theory. It is the goal to provide
a general toolbox for the analysis of coset patterns
in finite \ca graphs that can be regarded as locally tree-like.

Coset acyclicity generalises the ordinary graph-theoretic
notion of a cycle in the context of \ca graphs. In a
\ca graph, every edge is induced by an element~$e$ from
a generating set~$E$ of the associated \ca group.
A single step can therefore be represented by
a single generator. Coset cycles generalise this
notion by combining several generator steps into
a larger step that is represented by a coset that
is generated by a subset $\alpha\subseteq E$
of generators, compared to a single generator $e\in E$.
The formal definition of coset cycles that stipulates
precisely which sequences of cosets form a coset cycle
leads to a nice structure theory for coset acyclic
\ca graphs and \ca graphs without short coset cycles~\cite{Otto12JACM}.
We further investigate the structure theory of coset
acyclic \ca graphs and present several ways in which
these structures can be regarded as locally tree-like.
The other central concept, besides coset cycles,
is the notion of coset paths, which generalise graph-theoretic paths
in the same way that coset cycles generalise graph-theoretic cycles.
Among other results, we present a qualified uniqueness property
for coset paths in coset acyclic \ca graphs, and
establish several close connections between coset
acyclic \ca graphs and $\alpha$-acyclic hypergraphs.

\subsection*{Outline}
Section~\ref{sec:prelim} introduces the basic notions
and definitions: \ca graphs, cycles, paths, acyclicity,
and hypergraphs and $\alpha$-acyclicity. Section~\ref{sec:cycles}
presents the formal definition of coset cycles, takes a close
look at \ca graphs without coset cycles of length~2, which play
a special role, and establishes the first connections between
coset acyclicity and $\alpha$-acyclicity. Section~\ref{sec:paths}
contains the main results of this work. It introduces coset
paths, develops uniqueness properties for coset paths in
acyclic \ca graphs, and deepens the connection between
\ca graphs and hypergraphs with a focus on the equivalence
between two different notions of distance, one in \ca graphs
w.r.t.\ coset paths and one in hypergraphs.

\section{Preliminaries}\label{sec:prelim}

In this section, we introduce the main
objects we want to investigate, \ca graphs
and hypergraphs. We also present some basic
and well-known notions of acyclicity for
these structures, which will be further
developed and investigated in the course
of this work. We start with fixing some notation.

For an equivalence
relation~$R$ on~$A$, we denote the equivalence class
of an element $a\in A$ by $[a]_R$ and
write $A/{R}=\{[a]_R : a\in A\}$ for the set
of all equivalence classes.
The set of $R$-successors $\{b\in A : (a,b)\in R\}$
of an element~$a$ is denoted by $R[a]$.

\subsection{\ca graphs and acyclicity}

This work further investigates the notion
of coset acyclicity of \ca graphs, which was introduced
by Otto in~\cite{Otto12JACM}, and its connection to $\alpha$-acyclicity
of hypergraphs. This section introduces \ca graphs formally,
the usual graph-theoretic notion of acyclicity and
associated concepts. Coset acyclicity
is a generalisation
of the usual notion of graph acyclicity.

A \emph{Cayley group} is a group~$(G,\circ,1)$ with an
associated generator set~$E$ that consists of
non-trivial involutions, i.e.\ $e\neq 1$ and $e\circ e=1$,
for all $e\in E$.
That~$G$ is generated by the set~$E$ means that
every group element can be represented as a
product of generators. In other words, every
$g\in G$ can be represented as a word in~$E^*$;
w.l.o.g.\ such a representation is reduced
in the sense that is does not have any
factors~$e^2$.
We can view a non-empty generator set~$E$
as an alphabet and interpret any
word $v=e_1\dots e_n$ over~$E$ as a group element
in~$G$ via $[v]^G=e_1\circ\dots\circ e_n$.
We can also think of the letters~$e_i$ as the labels
of a path from~1 to~$[v]^G$ in the Cayley graph of~$G$.
For $v=e_1\dots e_n$, we denote by~$v^{-1}$ the word
$e_n\dots e_1$; since all generators are involutions,
$[v^{-1}]^G=([v]^G)^{-1}$.

\begin{definition}
 With every Cayley group~$(G,\circ,1)$
 generated by~$E$ one associates its \emph{\ca graph}
 $(G,(R_e)_{e\in E})$: its vertex set
 is the set of group elements~$G$, and
 its edge relations are
 $$R_e=\{\{v,v\circ e\}\in G\times G: v\in G\}.$$
 If~$G$ is a \ca group, we denote the group itself,
 its \ca graph and its set of group elements with~$G$.
 If~$G$ is a \ca graph, we also write $V[G]$
 for its vertex set and $R_e[G]$ for its $e$-labelled edge relation.
\end{definition}

In our case, all edge relations are loop-free,
undirected and complete matchings on~$G$.
Since~$E$ generates~$G$, the
graph $(G,(R_e)_{e\in E})$ is connected.
Furthermore, it is homogeneous in the sense that
every two vertices~$v$ and~$u$ are related by a
graph automorphism that is induced by multiplication
from the left with $uv^{-1}$.

For a subset $\alpha\subseteq E$
we consider the subgroup~$G_\alpha$, which is the subgroup
of~$G$ generated by the generators from~$\alpha$.
Its \ca graph, also denoted~$G_\alpha$, is a subgraph of~$G$;
it is isomorphic to the $\alpha$-component of~1.
The $\alpha$-component of an arbitrary group element~$v$
is described by its \emph{$\alpha$-coset}
$vG_\alpha=\{v\circ u\in G: u\in G_\alpha\}$.
Every $\alpha\subseteq E$ induces an equivalence relation
on~$G$ through partitioning~$G$ into its $\alpha$-cosets.
Hence, we usually denote the $\alpha$-coset of a group
element~$v$ as~$[v]_\alpha$.

The main notions that we investigate in this work
are paths and cycles. \ca graphs have
multiple edge relations~$R_e$ that are labelled
with generators $e\in E$ of the associate
\ca group. Hence, all paths and cycles will
be labelled with generators to differentiate the kind
of steps that lead from one vertex to the next.

\begin{definition}
 An \emph{($E$-labelled) path of length~$\ell$} in a \ca
 graph~$G$ is an alternating sequence $ v_1,e_1,v_2,\dots,v_\ell,e_\ell,v_\ellp $
 of vertices $v_i\in V[G]$ and labels $e_i\in E$
 such that $\{v_i,v_\ip\}\in R_{e_i}$,
 for all $1\leq i\leq\ell$, end all vertices and edges are distinct,
 with the possible exception of $v_1=v_\ellp$,
 in which case the path is called a \emph{cycle}.
 The vertices~$v_1$ and~$v_\ellp$ are called
 the endpoints of the path, and we speak of
 a path from~$v_1$ to~$v_\ellp$.
 If every edge of a path is labelled with an element
 from a subset $\alpha\subseteq E$, we call it an $\alpha$-path.
\end{definition}


The definition of paths leads to several well-known
notions like distance, reachability and connectedness:
The \emph{distance} $d(v,u)$ between two vertices~$v,u$ in a graph
is the minimal length of a path from~$v$ to~$u$;
$d(v,v)=0$ for all $v\in V$,
and $d(v,u)=\infty$ if there is no path from~$v$ to~$u$.
The \emph{$\ell$-neighbourhood} of a vertex~$v$, denoted $N^\ell(v)$,
is the set of vertices of distance at most~$\ell$ from~$v$,
i.e.\ $\{u: d(v,u)\leq\ell \}$. For $\alpha\subseteq E$,
a vertex~$u$ is \emph{$\alpha$-reachable}
from~$v$ if there is an $\alpha$-path from~$v$ to~$u$.

Graphs without cycles or
without any short cycles are structurally more simple.
This lends itself to be exploited by various applications like
efficient algorithms for generally intractable problems,
and it is important for model-theoretic constructions.
We will compare and generalise properties
of graphs without short cycles to graphs without short
coset cycles. We present some further notions connected
to cycles that will be important throughout.

\begin{definition}\label{dfn:kacyclic}
 Let $G$ be a \ca graph.
 \begin{enumerate}
  \item
    $G$ is \emph{acyclic} if it has no cycles.
  \item
    A $k$-cycle in~$G$ is a \emph{cycle of length~$k$}
    in~$G$.
  \item
    $G$ is \emph{$k$-acyclic} if it has no
    cycles of length $\leq k$.
  \item
    The \emph{girth} of~$G$ is the length
    of a minimal cycle.
  \item
    $G$ is a \emph{tree} if it is acyclic.
 \end{enumerate}
\end{definition}

Usually, trees are defined as acyclic and \emph{connected}
graphs. Since \ca graphs are always connected,
it suffices to require acyclicity.
In the case of \ca graphs, every tree
must be infinite. Take as an example the \ca graph
of the free group over~$E$, for a set of involutive
generators~$E$.
A finite \ca graph can never be fully acyclic,
but finite and $k$-acyclic \ca graphs
can be constructed easily.

\begin{prop}\cite{DawarOttoAPAL09}
 For every finite set~$E$ and every $k\in\N$
 there is a finite, $k$-acyclic \ca graph with
 generator set~$E$. 
\end{prop}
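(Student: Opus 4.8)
The plan is to reduce $k$-acyclicity to a statement about words in the generators, to realise that statement in an infinite free product, and then to descend to a finite quotient.

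First I would observe that a cycle in a \ca graph is exactly a nontrivial \emph{reduced} relation of bounded length. Indeed, a cycle $v_1,e_1,\dots,v_\ell,e_\ell,v_\ellp$ of length $\ell$ satisfies $v_1=v_\ellp$, so $[e_1\cdots e_\ell]^G=1$; and since $v_1,\dots,v_\ell$ are pairwise distinct, no two consecutive labels can agree (a repetition $e_i=e_\ip$ would identify two cycle vertices). Hence every cycle of length $\ell\le k$ yields a reduced word $w=e_1\cdots e_\ell$ of length at most $k$ with $[w]^G=1$. Noting in addition that a \ca graph has no $1$- or $2$-cycles, so its girth is at least $3$, it suffices to construct, for given finite $E$ and $k\in\N$, a finite \ca group with generator set $E$ in which \emph{no reduced word of length at most $k$ represents $1$}; such a group is automatically $k$-acyclic, and the cases $k\le 1$ are trivial.

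Next I would produce an infinite group with this property from which to extract a finite one. Let $W=\ast_{e\in E}\langle e\mid e^2\rangle$ be the free product of copies of $\Z/2\Z$, one per generator in $E$ (the ``universal Coxeter group'' on $E$). Its elements are in bijection with reduced words over $E$ via the evident normal form, so in $W$ no nonempty reduced word is trivial; equivalently the \ca graph of $W$ is a regular tree and is fully acyclic. Since $W$ is infinite, the real task is to pass to a finite quotient without creating short relations.

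The central step is a separation argument. The set $S$ of reduced words over the finite alphabet $E$ of length at most $\max(k,2)$ is finite, and by the normal form each nonempty $w\in S$ is a distinct nontrivial element of $W$. I would invoke that $W$, as a free product of finite groups, is residually finite: for each such $w$ there is a homomorphism $\varphi_w\colon W\to H_w$ onto a finite group with $\varphi_w(w)\ne 1$. Forming the product map $\Phi=(\varphi_w)_w\colon W\to\prod_w H_w$ and setting $G:=\Phi(W)$ gives a finite group generated by the involutions $\Phi(e)$, $e\in E$, in which no $w\in S$ maps to $1$ (its $w$-coordinate survives). In particular each $\Phi(e)$ is a genuine involution, and, because the length-$2$ products $e\circ e'$ of distinct generators lie in $S$, distinct generators remain distinct, so $E$ is still a legitimate involutive generator set of $G$. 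As no reduced word of length $\le k$ dies, $G$ is the required finite, $k$-acyclic \ca graph.

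I expect the residual finiteness of $W$ to be the only substantial point. It may be cited as the classical fact that a free product of (residually) finite groups is residually finite; for a self-contained treatment I would instead exhibit explicit finite permutation quotients separating a given reduced word from $1$, mapping the generators into a symmetric group so that the unique $w$-labelled walk out of a base point fails to return. The remaining issues — absence of $1$- and $2$-cycles, the images having order exactly $2$, and distinctness of generators — are routine bookkeeping taken care of by including all words of length $\le 2$ in $S$.
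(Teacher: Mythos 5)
Your argument is correct. Note that the paper itself gives no proof of this proposition; it is cited from the literature, so there is nothing internal to compare against. Your reduction of $k$-acyclicity to the absence of short nontrivial reduced relators is sound (the converse direction you need --- no short reduced relator implies no short cycle --- is exactly the contrapositive of the direction you verify, and your bookkeeping about girth $\geq 3$, order-$2$ images, and distinctness of generators via length-$\leq 2$ words is complete). The passage through the universal Coxeter group $W=\ast_{e\in E}\,\Z/2\Z$ and Gruenberg's theorem that free products of finite groups are residually finite does the job, since separating the finitely many short reduced words simultaneously in a product of finite quotients is legitimate. For comparison, the construction in the cited source is essentially the ``self-contained alternative'' you sketch at the end: one lets each $e\in E$ act as an involutive permutation of the set of reduced $E$-words of length at most $k$ (appending or cancelling $e$, fixing boundary words where appending would overflow), and observes that any reduced word of length at most $k$ moves the empty word and hence is nontrivial in the resulting finite permutation group. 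That route is more elementary and explicitly effective (it gives a concrete bound on $|G|$), whereas your residual-finiteness route is shorter but leans on a nontrivial classical theorem; both are valid.
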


If~$G$ is a tree, then two vertices are always connected
If a graph is $2k+1$-acyclic,
then the subgraphs induced by the $k$-neighbourhoods of all vertices
are $k$-acyclic, i.e.\ all $k$-neighbourhoods look like trees.
This implies that paths of \emph{length up to~$k$} in $2k+1$-acyclic
graphs are unique because two vertices at distance $\leq k$
from each other must share some tree-like $k$-neighbourhood.
These concepts generalise to coset acyclic graphs
in non-trivial ways and will be explored in Section~\ref{sec:shortPaths}.

\subsection{Hypergraphs}\label{sec:hypergraphs}

This section introduces hypergraphs, $\alpha$-acyclicity
and other already known related notions like tree
decompositions.
A hypergraph is a generalisation of a graph in which
an edge can contain any number of vertices.

\begin{definition}\label{dfn:hypergraph}
 A \emph{hypergraph} is a structure $\mathcal{A}=(A,S)$
 with a set of vertices~$A$ and a set of hyperedges
 $S\subseteq \mathcal{P}(A)$.\index{hypergraph}\index{hyperedge}
\end{definition}

With a hypergraph $\mathcal{A}=(A,S)$ we associate
its \emph{Gaifman graph} $G(\mathcal{A})=(A,G(S))$ with
an undirected edge relation~$G(S)$ that links
two vertices $a\neq a'$ if $a,a'\in s$, for
some $s\in S$. An $n$-cycle in a hypergraph is
a cycle of length~$n$ in its Gaifman graph,
and an $n$-path in a hypergraph is
a path of length~$n$ in its Gaifman graph.
The distance $d(X,Y)$ in a hypergraph between
two subsets of vertices~$X$ and~$Y$ is the usual
graph-theoretic distance between~$X$ and~$Y$ in its
Gaifman graph, i.e.\ the minimal length of a path
from~$X$ to~$Y$. A \emph{chord} of 
an $n$-cycle or $n$-path is an edge between vertices
that are not next neighbours along the cycle or path.

There are several, non-equivalent ways to define acyclic hypergraphs.
However, all the different notions of acyclicity
coincide for the usual undirected, loop-free graphs.
The following definition of hypergraph acyclicity is the
classical one from~\cite{Berge}, also known as
$\alpha$-acyclicity in~\cite{BeeriFaginetal};
$n$-acyclicity was introduced in~\cite{Otto12JACM}.

\begin{definition}
 A hypergraph $\mathcal{A}=(A,S)$ is \emph{acyclic} if it is \emph{conformal}
 and \emph{chordal}:
 \begin{enumerate}
  \item conformality requires that every clique in the Gaifman graph~$G(\mathcal{A})$
	is contained in some hyperedge $s\in S$;
  \item chordality requires that every cycle in the Gaifman graph~$G(\mathcal{A})$
        of length greater than~3 has a chord.
 \end{enumerate}

 For $n\geq 3$, $\mathcal{A}=(A,S)$ is \emph{$n$-acyclic} if it is \emph{$n$-conformal}
 and \emph{$n$-chordal}:
 \begin{enumerate}
 \setcounter{enumi}{2}
  \item $n$-conformality requires that every clique in $G(\mathcal{A})$
         up to size~$n$ is contained in some hyperedge $s\in S$;
  \item $n$-chordality requires that every cycle in $G(\mathcal{A})$
        of length greater than~3 and up to~$n$ has a chord.
 \end{enumerate}
\end{definition}

\begin{remark}
If a hypergraph is $n$-acyclic, then every induced
substructure of size up to~$n$ is acyclic~\cite{Otto12JACM}.
\end{remark}

Conformal and chordal hypergraphs are called acyclic
because they are tree-like
in the sense that they are \emph{tree decomposable}.

\begin{definition}
 A hypergraph~$(A,S)$ is \emph{tree decomposable} if it
 admits a tree decomposition $\mathcal{T}=(T,\delta)$:
 $T$ is a tree and $\delta\colon T\to S$ is a map
 such that $\mathrm{image}(\delta)=S$ and, for every
 node $a\in A$, the set $\{v\in T : a\in\delta(v)\}$
 is connected in~$T$.\index{tree decomposition}
\end{definition}
A well-known result from classical hypergraph theory
states that a hypergraph is tree decomposable
if and only if it is acyclic
(see~{\cite{Berge}},~\cite{BeeriFaginetal}).

\section{Acyclicity in \ca graphs and hypergraphs}\label{sec:cycles}

This chapter is concerned with a more general
notion of cycles called \emph{coset cycles};
it was introduced by Otto in~\cite{Otto12JACM}.
Some of the results in this section
can also be found in~\cite{CO17}.


\subsection{Coset acyclicity}\label{sec:cosetCycles}

We can write a labelled cycle of length~$m$ as a finite
sequence $((v_i,e_i))_{i\in\Z_m}$ of pairs from $G\times E$
with $(v_i,v_\ip)\in R_{e_i}$, for all $i\in\Z_m$.
In such an ordinary cycle, every step from~$v_i$ to~$v_\ip$
goes along exactly one edge.
\emph{Coset cycles} allow for steps
that consist of multiple edges at once, or in other words some group
element that is the product of multiple generators from some
subset $\alpha\subseteq E$.
To differentiate ordinary cycles from coset cycles, we use the
following conventions. A \emph{cycle} can both be a finite sequence
of the form $((v_i,e_i))_{i\in\Z_m}$, $e_i\in E$, or $((v_i,\alpha_i))_{i\in\Z_m}$,
$\alpha_i\subseteq E$, where $v_i^{-1} v_\ip\in G_{e_i}$ or
$v_i^{-1} v_\ip\in G_{\alpha_i}$, respectively. A \emph{generator cycle}
is cycle of the form $((v_i,e_i))_{i\in\Z_m}$, where all~$e_i$
are single generators.

\begin{definition}\label{def:OttoCosetCycle}
Let $G$ be a \ca graph with generator set~$E$.
A \emph{coset cycle of length~$m$ in $G$} is a finite sequence
$((v_i,\alpha_i))_{i\in\Z_m}$ with $v_i\in G$ and $\alpha_i\subseteq E$,
for all $i \in \Z_m$, where
$v_i^{-1}v_{i+1}\in G_{\alpha_i}$ and 
\[ 
 [v_{i}]_{\alpha_{i-1}\cap \alpha_i}\cap
 [v_{i+1}]_{\alpha_i\cap\alpha_{i+1}}=\emptyset. 
\]
\end{definition}

\begin{remark}
 For $((v_i,\alpha_i))_{i\in\Z_m}$
 we call
 $[v_{i}]_{\alpha_{i-1}\cap \alpha_i}\cap
 [v_{i+1}]_{\alpha_i\cap\alpha_{i+1}}=\emptyset$ the
 \emph{coset cycle property}.
 It essentially states that every $\alpha_i$-step
 from~$v_i$ to~$v_\ip$ has to count in the sense that it cannot
 be replaced by the previous $\alpha_\im$-step and the subsequent
 $\alpha_\ip$-step. Without this property we would admit ``too many''
 cycles and would not obtain a sensible theory for coset cycles.
\end{remark}

\begin{definition}\label{def:OttoNacyclic}
A \ca graph is \emph{acyclic} if it does not contain a coset
cycle, and \emph{$n$-acyclic} if it does not contain a coset
cycle of length up to~$n$.
\end{definition}

This definition leads to a theory of coset acyclic
\ca graphs that is interesting in itself and has
been shown to be useful for applications in finite
model theory in~\cite{Otto12JACM} and~\cite{CO17}. The exploration
of the structure theory of coset acyclic \ca graphs
is the main topic of this work.
For the remainder of this work, if we speak about
acyclic or $n$-acyclic \ca graphs, we always mean
coset acyclic or coset $n$-acyclic. Acyclicity in the
usual graph-theoretic sense will be indicated specifically.

Coset acyclicity is of further special interest because
every \ca group can be \emph{covered} by an acyclic group
and every finite \ca group can be covered by a finite
$n$-acyclic group, for arbitrary $n$.

\begin{definition}
 A homomorphism $\pi\colon \hat{G}\to G$ is a
 \emph{covering} of~$G$ by~$\hat{G}$ if
 it is surjective and for every $v\in V[\hat{G}]$,
 the restriction of~$\pi$ to the 1-neighbourhood
 of~$v$ is an isomorphism onto the 1-neighbourhood
 of $\pi(v)$.
 If $\pi\colon\hat{G}\to G$ is a covering, we also
 often refer to the structure~$\hat{G}$ as a
 covering of~$G$, or say that~$\hat{G}$ covers~$G$. 
\end{definition}

If~$G$ is a \ca group that is generated by~$E$,
we can construct a covering~$\pi\colon\hat{G}\to G$
and give the function rule of~$\pi$ based on the representation
of a group element~$v$ as a word over~$E$.
However, since an element~$v$ can be
represented by multiple words, the covering must be
compatible with the original group in the following
sense.

\begin{definition}
 Let~$H$ and~$G$ be groups with generator set~$E$.
 $H$ is \emph{compatible} with~$G$ if for all words~$w$ over~$E$ if
 $[w]^G=1$ implies $[w]^H=1.$
\end{definition}

If~$H$ is compatible with~$G$, it is easy to see that~$G$
in fact covers~$H$.

\begin{remark}
 If~$H$ is compatible with~$G$, then
 $ \pi:G\to H, [w]^G\mapsto [w]^H $
 is a well-defined, surjective group homomorphism.
 In particular, $\pi$ is a covering of~$H$ by~$G$.
\end{remark}

Fully acyclic and infinite coverings can be obtained easily
by using the free group over~$E$. Constructing finite, fully acyclic
coverings is out of the question.
But Otto showed in~\cite{Otto12JACM} that it is
possible to construct finite coverings that have an arbitrarily
high degree of acyclicity:

\begin{lemma}\label{le:OttoNacyclic}
 For every finite \ca group~$G$ with finite generator set~$E$
 and every $n\in\N$, there is a finite, $n$-acyclic \ca group~$\hat{G}$
 with generator set~$E$ such that~$G$ is compatible with~$\hat{G}$, and
 $ \pi\colon\hat{G}\to G, [w]^{\hat{G}} \mapsto [w]^G $
 is a covering.
\end{lemma}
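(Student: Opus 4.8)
The plan is to construct $\hat{G}$ explicitly as a group of words over $E$, subject to a carefully chosen set of "local" relations that force compatibility with $G$ while leaving enough freedom to avoid short coset cycles. The guiding idea is that coset cycles of length $\leq n$ are finitary, local obstructions: each one is witnessed by a bounded amount of data (a sequence of at most $n$ cosets with the coset cycle property). So I want a group whose multiplication is governed by relations that are "long enough" to be invisible to any short coset cycle, yet strong enough to collapse back onto $G$. A natural vehicle is a quotient of the free group $F(E)$ over $E$ by the normal closure of (i) all involution relations $e^2=1$, and (ii) all relations $w=1$ where $w$ is a reduced $E$-word representing $1$ in $G$ whose length is bounded by some threshold depending on $n$. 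One must check that this quotient is finite, that $G$ is compatible with it (immediate, since every short relator of $G$ is imposed), and that it is coset $n$-acyclic.

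**First I would** make precise how a coset cycle projects. Since $\pi$ maps $[w]^{\hat G}\mapsto[w]^G$, a coset cycle $((v_i,\alpha_i))_{i\in\Z_m}$ in $\hat G$ is determined by the cosets $[v_i]_{\alpha_i}$, and its defining condition $v_i^{-1}v_\ip\in\hat G_{\alpha_i}$ together with the coset cycle property $[v_i]_{\alpha_\im\cap\alpha_i}\cap[v_\ip]_{\alpha_i\cap\alpha_\ip}=\emptyset$ are statements about equalities and inequalities of group elements reachable by short $\alpha$-words. The strategy is to show: if $\hat G$ is built so that no nontrivial product of at most $n$ such $\alpha$-cosets returns to its start except through the "counted" way, then the coset cycle property fails for every candidate of length $\leq n$, hence no short coset cycle exists. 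This is where I would invoke the structure of $\hat G$'s relations: a coset cycle closing up corresponds to a relator, and if all relators of $\hat G$ are either inherited from $G$ (hence respected by $\pi$) or have length exceeding the span of any length-$\leq n$ coset cycle, the only closed coset walks are the degenerate ones ruled out by the coset cycle property.

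**The hard part will be** controlling the trade-off between finiteness and acyclicity: imposing enough relations to make $\hat G$ finite without accidentally creating a short coset cycle. In the free-group-plus-short-relators construction above it is not obvious that the quotient is finite at all, nor that the short $G$-relators do not themselves assemble into a coset cycle of length $\leq n$. The cleanest route, which I expect the author to take, is to cite or adapt Otto's original combinatorial/algebraic construction from~\cite{Otto12JACM} rather than build from scratch: that construction produces, for each $n$, a finite $n$-acyclic \ca group over $E$ by a quotienting or amalgamation procedure whose acyclicity is verified directly. Given such a finite $n$-acyclic group $\hat G_0$ over $E$, one would then need to ensure compatibility with the specific target $G$; this can be arranged by passing to a suitable finite quotient or product that simultaneously retains $n$-acyclicity and kills exactly the relators of $G$.

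**The remaining step** is routine: once a finite $n$-acyclic $\hat G$ compatible with $G$ is in hand, the map $\pi\colon[w]^{\hat G}\mapsto[w]^G$ is well-defined and surjective by the compatibility remark preceding the lemma, and it is a covering because each edge relation $R_e$ is a complete matching, so the $1$-neighbourhood of any vertex consists of exactly one $e$-neighbour per generator $e\in E$, and $\pi$ bijects these neighbourhoods by construction. Thus the only genuine content is the existence of the finite $n$-acyclic compatible $\hat G$, and I expect the proof to lean on Lemma~\ref{le:OttoNacyclic}'s antecedent in~\cite{Otto12JACM} for that existence.
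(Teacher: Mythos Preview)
The paper does not prove this lemma at all: it is stated immediately after the sentence ``This has been shown by Otto in~\cite{Otto12JACM}'' and is simply cited without proof. Your proposal correctly anticipates this in its final paragraph, where you write that you ``expect the proof to lean on~\cite{Otto12JACM} for that existence.'' That is exactly what happens --- the paper treats the lemma as a black box imported from Otto's earlier work.

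Your exploratory sketch is honest about its own gaps: the free-group-mod-short-relators idea is, as you note, not obviously finite and not obviously $n$-acyclic, and you do not claim to close either gap. So what you have written is not a proof but a discussion of what a proof would need to achieve, together with a (correct) prediction that the paper defers to the cited source. The only part of your write-up that is genuinely complete is the final routine step --- that compatibility implies $\pi$ is a well-defined surjective homomorphism and that the matching structure of the $R_e$ makes it a covering --- and this matches the paper's own remark preceding the lemma.
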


Many concepts for graphs that are
acyclic in the usual sense can be generalise to \ca graphs that are
coset acyclic. We establish several close connections
between acyclic \ca graphs and $\alpha$-acyclic hypergraphs,
and argue that acyclic \ca graphs can be considered tree-like
in a more general sense. 
First, we take a closer look at 2-acyclicity
because it provides the backbone for most of the
forthcoming definitions and all further analysis.

\subsection{2-acyclicity}\label{sec:2acyc}

A \ca graph is 2-acyclic if there are no coset cycles of length~2,
i.e.\ if for all vertices $v,u$ and all sets of generators $\alpha,\beta$
with $[v]_\alpha=[u]_\alpha$ and $[v]_\beta=[u]_\beta$:
$[v]_{\alpha\cap\beta}\cap [u]_{\alpha\cap\beta}\neq\emptyset$.
2-acyclicity imposes
a high degree of order in \ca graphs.

\begin{lemma}\label{le:cutChar}
 A \ca graph~$G$ is $2$-acyclic if and only if for all
 $v\in G, \alpha,\beta\subseteq E$
 \[ [v]_\alpha\cap [v]_\beta=[v]_{\alpha\cap\beta}. \]
\end{lemma}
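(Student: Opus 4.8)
The plan is to prove both directions of the equivalence. First, note that the inclusion $[v]_{\alpha\cap\beta}\subseteq [v]_\alpha\cap[v]_\beta$ is immediate and holds in every \ca graph: if $u\in[v]_{\alpha\cap\beta}$, then $v^{-1}u\in G_{\alpha\cap\beta}\subseteq G_\alpha\cap G_\beta$, so $u$ lies in both $[v]_\alpha$ and $[v]_\beta$. The content of the lemma is therefore entirely in the reverse inclusion $[v]_\alpha\cap[v]_\beta\subseteq[v]_{\alpha\cap\beta}$, and it is exactly this inclusion that should be equivalent to 2-acyclicity.

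For the direction assuming 2-acyclicity, I would take an arbitrary $u\in[v]_\alpha\cap[v]_\beta$ and show $u\in[v]_{\alpha\cap\beta}$. By the characterisation of 2-acyclicity stated just before the lemma (no coset cycles of length~2), the hypotheses $[v]_\alpha=[u]_\alpha$ and $[v]_\beta=[u]_\beta$ yield $[v]_{\alpha\cap\beta}\cap[u]_{\alpha\cap\beta}\neq\emptyset$. So there is a witness $w$ with $w\in[v]_{\alpha\cap\beta}$ and $w\in[u]_{\alpha\cap\beta}$. Since $(\alpha\cap\beta)$-cosets form an equivalence relation, $[v]_{\alpha\cap\beta}=[w]_{\alpha\cap\beta}=[u]_{\alpha\cap\beta}$, whence $u\in[v]_{\alpha\cap\beta}$, giving the missing inclusion.

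For the converse, I would argue by contraposition: if $G$ is not 2-acyclic, produce $v,\alpha,\beta$ violating the cut identity. Unwinding Definition~\ref{def:OttoCosetCycle} for $m=2$ (indices in $\Z_2$, so $i-1$ and $i+1$ coincide modulo~$2$), a coset 2-cycle is a pair $((v_0,\alpha_0),(v_1,\alpha_1))$ with $v_0^{-1}v_1\in G_{\alpha_0}$, $v_1^{-1}v_0\in G_{\alpha_1}$, and the coset cycle property $[v_0]_{\alpha_0\cap\alpha_1}\cap[v_1]_{\alpha_0\cap\alpha_1}=\emptyset$. The first two conditions say precisely $[v_0]_{\alpha_0}=[v_1]_{\alpha_0}$ and $[v_0]_{\alpha_1}=[v_1]_{\alpha_1}$, so $v_1\in[v_0]_{\alpha_0}\cap[v_0]_{\alpha_1}$, while the coset cycle property forces $v_1\notin[v_0]_{\alpha_0\cap\alpha_1}$ (as $[v_1]_{\alpha_0\cap\alpha_1}$ is disjoint from $[v_0]_{\alpha_0\cap\alpha_1}$). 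Setting $v\coloneqq v_0$, $\alpha\coloneqq\alpha_0$, $\beta\coloneqq\alpha_1$ then shows $[v]_\alpha\cap[v]_\beta\neq[v]_{\alpha\cap\beta}$, since the left side contains $v_1$ but the right side does not.

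The only real subtlety, and the step I would treat most carefully, is the reconciliation of the informally stated 2-acyclicity condition (phrased for two vertices $v,u$ with $[v]_\alpha=[u]_\alpha$, $[v]_\beta=[u]_\beta$) with the formal Definition~\ref{def:OttoCosetCycle} of a coset 2-cycle, especially the collapse of the cyclic indices $i-1$ and $i+1$ when $m=2$. I would verify once, explicitly, that $v_i^{-1}v_{i+1}\in G_{\alpha_i}$ is equivalent to $[v_i]_{\alpha_i}=[v_{i+1}]_{\alpha_i}$ and that the emptiness condition is equivalent to the two cosets being distinct; everything else is a routine use of the fact that cosets partition $G$.
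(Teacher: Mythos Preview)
Your proof is correct and essentially matches the paper's: both directions hinge on the observation that $[v]_{\alpha\cap\beta}\subseteq[v]_\alpha\cap[v]_\beta$ always holds and that a witness $u\in([v]_\alpha\cap[v]_\beta)\setminus[v]_{\alpha\cap\beta}$ is exactly the data of a coset 2-cycle. The only cosmetic difference is that you prove the implication ``2-acyclic $\Rightarrow$ cut identity'' directly (via a witness $w$ in the nonempty intersection), whereas the paper argues both directions by contraposition; the underlying observations are identical.
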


\begin{proof}
 "$\Leftarrow$": If there is a 2-cycle $v,\alpha,u,\beta,v$, then
 $u\in[v]_\alpha\cap[v]_\beta$ and
 $[v]_{\alpha\cap\beta}\cap [u]_{\alpha\cap\beta}=\emptyset$.
 In particular, this means $u\notin [v]_{\alpha\cap\beta}$, which implies
 $[v]_\alpha\cap [v]_\beta\neq[v]_{\alpha\cap\beta}$.
 
 "$\Rightarrow$": Assume there are $v\in G$, $\alpha,\beta\subseteq E$
 such that $[v]_\alpha\cap [v]_\beta\neq[v]_{\alpha\cap\beta}$.
 Since by definition always $[v]_{\alpha\cap\beta}\subseteq[v]_\alpha\cap [v]_\beta$,
 there must be some $u\in([v]_\alpha\cap [v]_\beta)\setminus[v]_{\alpha\cap\beta}$.
 In particular, $u\notin[v]_{\alpha\cap\beta}$ implies
 $[v]_{\alpha\cap\beta}\cap[u]_{\alpha\cap\beta}=\emptyset$.
 Hence $v,\alpha,u,\beta,g$ forms a 2-cycle.
\end{proof}

\begin{example}
 A \ca graph can be of girth~4
 without being even coset 2-acyclic:
 The symmetric group~$S_3$ generated by the transpositions
 $(1,2),(1,3),(2,3)$ has such a \ca graph. Its shortest
 cycle has length~4, but it contains the coset 2-cycle
 $(1),\{(1,2),(2,3)\},(1,3),\{(1,3)\},(1)$.
 This example further illustrates that there is no unique
 minimal connecting subset of generators between two group
 elements; both $\{(1,2),(2,3)\}$ and $\{(1,3)\}$ connect~$(1)$
 and~$(1,3)$, but neither is contained in the other.
 This is not the case in coset 2-acyclic graphs,
 as Lemma~\ref{le:2acycProps} shows. 
\end{example}

The characterisation of 2-acyclicity in Lemma~\ref{le:cutChar} implies
that the intersections of cosets with different subsets of generators in
2-acyclic \ca groups are already far form arbitrary.
As mentioned above, 2-acyclicity provides the backbone of our
further structural analysis. Lemma~\ref{le:2acycProps} shows that in 2-acyclic
groups two elements~$v,u$ are always connected by some unique minimal
set of generators~$\alpha$, i.e.\ $[v]_{\beta}=[u]_{\beta}$ if and only if
$\beta\supseteq\alpha$.
Before we present the lemma, we define the \emph{dual hyperedge}.

\begin{definition}\label{def:dualHyperedge}
In a \ca graph~$G$, define the \emph{dual hyperedge}
induced by an element~$v$ to be the set of cosets
that contain~$v$:
\[
\brck{v} := \{ [v]_\alpha \colon \alpha \subseteq E \}
\]
\end{definition}

\begin{remark}
 In a \ca graph~$G$ for all $v,u\in G$ and all $\alpha\subseteq E$:
 \[
  [v]_\alpha=[u]_\alpha \quad \Leftrightarrow \quad
  v\in [u]_\alpha \quad \Leftrightarrow \quad
  [u]_\alpha \in \brck{v}
 \]
\end{remark}

\begin{lemma}\label{le:2acycProps}
In a $2$-acyclic~\ca group~$G$ with elements $v, v_1,\dots,v_k$ 
and sets of generators $\alpha_1,\ldots,\alpha_k\subseteq E$:
\begin{enumerate}
\item
 For $\beta := \bigcap_{1\leq i\leq k}\alpha_i$:
 $$ v\in\bigcap_{1\leq i\leq k}[v_i]_{\alpha_i}
    \qquad \Rightarrow \qquad
    \bigcap_{1\leq i\leq k}[v_i]_{\alpha_i} = [v]_\beta$$
\item
 The set $\bigcap_{1\leq i\leq k}\brck{v_i}$ 
 has a least element in the sense that there is an $\alpha_0 \subseteq E$
 such that $[v_1]_{\alpha_0} \in \bigcap_{1\leq i\leq k}\brck{v_i}$
 and, for any $\alpha \subseteq E$: 
 $$ [v_i]_{\alpha} \in \bigcap_{1\leq i\leq k}\brck{v_i}
 \qquad \Leftrightarrow \qquad
 \alpha_0 \subseteq \alpha' $$
\end{enumerate}
\end{lemma}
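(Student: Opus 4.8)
Both parts reduce to the cut characterisation of Lemma~\ref{le:cutChar}, which is the only place where $2$-acyclicity is used. For part~(1) I would first note that $v\in[v_i]_{\alpha_i}$ gives $[v]_{\alpha_i}=[v_i]_{\alpha_i}$ for each~$i$, so after this change of base point the claim becomes $\bigcap_{i}[v]_{\alpha_i}=[v]_{\bigcap_i\alpha_i}$. I would prove this by induction on~$k$: the case $k=1$ is trivial, and writing $\beta'=\bigcap_{i\le k-1}\alpha_i$, the inductive step is a single application of Lemma~\ref{le:cutChar},
\[
 \bigcap_{i\le k}[v]_{\alpha_i}=[v]_{\beta'}\cap[v]_{\alpha_k}=[v]_{\beta'\cap\alpha_k}=[v]_{\beta}.
\]
The only thing to keep track of is the initial change of base point.

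For part~(2) I would call a set $\alpha\subseteq E$ \emph{connecting} if $[v_1]_\alpha=\dots=[v_k]_\alpha$, and collect all connecting sets in a family~$\mathcal{S}$. By the remark following Definition~\ref{def:dualHyperedge}, $[v_i]_\alpha\in\bigcap_j\brck{v_j}$ holds iff every $v_j$ lies in $[v_i]_\alpha$, i.e.\ iff $\alpha$ is connecting; hence $\bigcap_j\brck{v_j}=\{[v_1]_\alpha:\alpha\in\mathcal{S}\}$. Since $\alpha\subseteq\alpha'$ implies $[v_1]_\alpha\subseteq[v_1]_{\alpha'}$, a least element of this set of cosets corresponds to a minimum of~$\mathcal{S}$. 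Moreover $\mathcal{S}$ is upward closed, because $G_\alpha\subseteq G_{\alpha'}$ keeps the $v_j$ in a common $\alpha'$-coset; so once a minimum $\alpha_0$ of~$\mathcal{S}$ is produced, the stated equivalence $[v_i]_\alpha\in\bigcap_j\brck{v_j}\Leftrightarrow\alpha_0\subseteq\alpha$ is immediate, and $[v_1]_{\alpha_0}$ is the least element.

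It therefore remains to show that $\mathcal{S}$ has a least element, which is where $2$-acyclicity is indispensable (the $S_3$ example shows the conclusion can fail without it). First, $E\in\mathcal{S}$ since $E$ generates~$G$, so $\mathcal{S}\neq\emptyset$. Second, $\mathcal{S}$ is closed under finite intersection: for $\alpha,\beta\in\mathcal{S}$ every $v_j$ lies in $[v_1]_\alpha\cap[v_1]_\beta=[v_1]_{\alpha\cap\beta}$ by Lemma~\ref{le:cutChar}, so $\alpha\cap\beta\in\mathcal{S}$. If~$E$ is finite, this already yields the minimum $\alpha_0:=\bigcap_{\alpha\in\mathcal{S}}\alpha\in\mathcal{S}$. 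For arbitrary~$E$ I would reduce to finite support: each $v_1^{-1}v_j$ is a finite word over~$E$, so the finite set~$\gamma$ of generators occurring in these words is connecting, giving $\alpha_0\subseteq\gamma$; then $\{\alpha\cap\gamma:\alpha\in\mathcal{S}\}$ is a finite, intersection-closed subfamily of~$\mathcal{S}$ whose intersection equals $\bigcap_{\alpha\in\mathcal{S}}\alpha=\alpha_0$, so $\alpha_0\in\mathcal{S}$ by finite intersection-closure.

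I expect the induction in part~(1), together with the reformulation, upward-closure and non-emptiness in part~(2), to be routine bookkeeping. The one load-bearing step — and the natural place for a slip — is the closure of~$\mathcal{S}$ under intersection, the sole use of Lemma~\ref{le:cutChar} and hence of $2$-acyclicity; the only further wrinkle, present only if~$E$ is not assumed finite, is the finite-support reduction that upgrades finite intersection-closure to a genuine minimum.
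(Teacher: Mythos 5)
Your proposal is correct and follows essentially the same route as the paper: part (1) is the change of base point followed by the cut characterisation of Lemma~\ref{le:cutChar}, and part (2) rests on showing that the family of connecting generator sets is closed under intersection, which is exactly the paper's argument (the paper exhibits the forbidden $2$-cycle directly rather than citing Lemma~\ref{le:cutChar}, but that is the same content). Your additional bookkeeping --- upward closure, non-emptiness, and the finite-support reduction for infinite~$E$ --- fills in steps the paper leaves implicit rather than taking a different approach.
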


\begin{proof}
\begin{enumerate}
 \item
   Lemma~\ref{le:cutChar} implies
   $\bigcap_{1\leq i\leq k}[v_i]_{\alpha_i} = \bigcap_{1\leq i\leq k}[v]_{\alpha_i} = [v]_\beta$.
 \item
   $2$-acyclicity implies that the collection
   \[
    \{ \alpha \subseteq E \colon [v_1]_\alpha \in \bigcap_{1\leq i\leq k}\brck{v_i}\}
   \]
   is closed under intersections: otherwise there would be $\alpha,\beta\subseteq E$
   with
   $$[v_1]_\alpha,[v_1]_\beta\in\bigcap_{1\leq i\leq k}\brck{v_i}
   \qquad \mbox{and} \qquad
   [v_1]_{\alpha\cap\beta}\notin\bigcap_{1\leq i\leq k}\brck{v_i}.$$
   This implies $[v_1]_{\alpha\cap\beta}\notin\brck{v_j}$, but
   $[v_1]_{\alpha},[v_1]_{\beta}\in\brck{v_j}$,
   for some $1\leq j\leq k$.
   Hence, there would be a 2-cycle $v_1,\alpha,v_j,\beta,v_1$.
\end{enumerate}
\end{proof}

Lemma~\ref{le:2acycProps} justifies the following definition.

\begin{definition}
 In a $2$-acyclic \ca graph we denote the unique \emph{minimal set of
 generators that connects the vertices in a tuple $\vbar$}
 by~$\gen(\vbar)\subseteq E$.
\end{definition}

Intuitively, $\gen(\vbar)$ sets the scale for zooming-in on
the minimal substructure that connects the vertices~$\vbar$.
It behaves in a regular manner.

\begin{lemma}\label{le:addAgent}\label{le:oneLessAgent}
In a $2$-acyclic \ca graph~$G$ for vertices $v,u$ and
  every generator $e\notin \gen(v,u)$:
  \[\gen(v,u\circ e)=\gen(v,u)\cup\{e\}\]
\end{lemma}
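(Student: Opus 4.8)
The plan is to work entirely on the level of the group element $g := v^{-1}u$ and to use the characterisation of $\gen$ coming from Lemma~\ref{le:2acycProps}(2): for the pair $(v,u)$, setting $\alpha := \gen(v,u)$, one has $[v]_\beta = [u]_\beta$, equivalently $g \in G_\beta$, precisely when $\beta \supseteq \alpha$. Translating the claim the same way, I observe that $v^{-1}(u\circ e) = g\circ e =: h$, so it suffices to show that $\gen(v,u\circ e)$ --- the least $\gamma$ with $h \in G_\gamma$ --- equals $\alpha\cup\{e\}$. One inclusion is immediate: since $g\in G_\alpha$ and $e\in G_{\{e\}}$, we have $h = g\circ e \in G_{\alpha\cup\{e\}}$, so by minimality $\gen(v,u\circ e)\subseteq\alpha\cup\{e\}$.

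The crux, and the step where $2$-acyclicity is genuinely needed, is the auxiliary fact that a generator lying outside $\alpha$ cannot be swallowed by the subgroup $G_\alpha$: if $e\notin\alpha$ then $e\notin G_\alpha$ --- an implication that fails in general \ca graphs, as the $S_3$ example above illustrates. I would prove this by contradiction. Assuming $e\in G_\alpha$ gives $[1]_\alpha = [e]_\alpha$, while trivially $[1]_{\{e\}} = [e]_{\{e\}}$; hence $e \in [1]_\alpha \cap [1]_{\{e\}}$. By Lemma~\ref{le:cutChar} this intersection equals $[1]_{\alpha\cap\{e\}} = [1]_\emptyset = \{1\}$, because $e\notin\alpha$ forces $\alpha\cap\{e\}=\emptyset$ and $G_\emptyset=\{1\}$. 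Thus $e=1$, contradicting that $e$ is a non-trivial involution.

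With this in hand the remaining inclusion $\alpha\cup\{e\}\subseteq\gamma$, where $\gamma:=\gen(v,u\circ e)\subseteq\alpha\cup\{e\}$, is routine bookkeeping. First, $e\in\gamma$: otherwise $\gamma\subseteq\alpha$, so $h\in G_\gamma\subseteq G_\alpha$, and then $e=g^{-1}h\in G_\alpha$, contradicting the auxiliary fact. Second, since $e\in\gamma$ we have $e\in G_\gamma$, so $g = h\circ e \in G_\gamma$; the characterisation of $\alpha=\gen(v,u)$ then yields $\alpha\subseteq\gamma$. Together with $e\in\gamma$ and the upper bound this gives $\gamma=\alpha\cup\{e\}$, as desired. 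I expect the only real obstacle to be isolating and proving the auxiliary fact, since everything else follows from the two defining properties of $\gen$; the main thing to watch is that $2$-acyclicity is invoked exactly once, precisely to rule out the pathological coset collapses that the $S_3$ example exhibits.
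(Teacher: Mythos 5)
Your proof is correct and follows essentially the same route as the paper's: the inclusion $\gen(v,u\circ e)\subseteq\gen(v,u)\cup\{e\}$ comes from minimality, membership of $e$ is forced by a single application of $2$-acyclicity, and $\gen(v,u)\subseteq\gen(v,u\circ e)$ follows again from minimality of $\gen(v,u)$. The only difference is presentational: where the paper exhibits a coset $2$-cycle on $u$ and $u\circ e$ with labels $\gen(v,u)$ and $\{e\}$ directly from the definition, you route the same contradiction through Lemma~\ref{le:cutChar} (your auxiliary fact that $e\notin\alpha$ implies $e\notin G_\alpha$ is the special case $\beta=\{e\}$ of Lemma~\ref{le:agentsAndClasses}), and you recast coset equalities as subgroup membership of $v^{-1}u$.
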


\begin{proof}
Set $\alpha:=\gen(v,u)$, and
   let $e\in E\setminus\gen(v,u)$, $u':=u\circ e\neq u$,
   and set $\beta:=\gen(v,u')$.
   The choice of~$u'$ implies an $(\alpha\cup\{e\})$-path from~$v$ to~$u'$.
   Hence, $\beta\subseteq (\alpha\cup\{e\})$ because of 2-acyclicity
   and Lemma~\ref{le:2acycProps}.
   
   Assume $\beta\subsetneq (\alpha\cup\{e\})$.
   First, if $e\notin\beta$, then $\beta\subseteq\alpha$,
   which means there is an $\alpha$-path from~$v$
   to~$u'$ that can be combined with the $\alpha$-path from~$v$ to~$u$
   to an $\alpha$-path from~$u$ to~$u'$. Furthermore,
   $[u]_{\alpha\cap\{e\}}=[u]_\emptyset=\{u\}$ and
   $[u']_{\{e\}\cap\alpha}=[u']_\emptyset=\{u'\}$ since $e\notin\alpha$.
   Together with $u\neq u'$ this implies that $v,\alpha,u',e,u$ forms
   a 2-cycle. Thus, $a\in\beta$ since~$G$ is 2-acyclic.\\
   Second, assume there is some generator $e'\in\alpha$ with $e'\notin\beta$.
   Additionally, $e\in\beta$ and $[u]_e=[u']_e$ imply
   $[u]_\beta=[u']_\beta=[v]_\beta.$
   However, if $\beta\cap\alpha \subsetneq \alpha$, then a $\beta$-path
   from~$v$ to~$u$ contradicts the minimality property of~$\alpha$.
\end{proof}

Lemma~\ref{le:agentsAndClasses} gives us some additional useful insight
into the structure of 2-acyclic \ca graphs.

\begin{lemma}\label{le:agentsAndClasses}
 Let~$G$ be a 2-acyclic \ca graph.
 Then, for all vertices~$v$ and all $\alpha,\beta\subseteq E$,
 $ \beta\subseteq\alpha $ if and only if $[v]_\beta \subseteq [v]_\alpha.$
\end{lemma}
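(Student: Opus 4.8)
The plan is to prove the two implications separately, observing at the outset that only the right-to-left direction relies on 2-acyclicity. For the forward implication $\beta\subseteq\alpha \Rightarrow [v]_\beta\subseteq[v]_\alpha$ I would argue purely group-theoretically: since every generator in $\beta$ also lies in $\alpha$, the subgroup $G_\beta$ generated by $\beta$ is contained in $G_\alpha$, and left-multiplying by $v$ yields $[v]_\beta = vG_\beta \subseteq vG_\alpha = [v]_\alpha$. This direction holds in an arbitrary \ca graph and uses none of the preceding lemmas.

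For the converse, I would assume $[v]_\beta\subseteq[v]_\alpha$ and fix an arbitrary generator $e\in\beta$, aiming to show $e\in\alpha$. First note that $v\circ e\in[v]_\beta$ (because $e\in G_\beta$), and hence $v\circ e\in[v]_\alpha$ by hypothesis. The crux is to identify $\gen(v,v\circ e)$ precisely: since $e$ is a non-trivial involution we have $v\circ e\neq v$, so the minimal connecting set cannot be empty; and since the single generator $e$ connects $v$ to $v\circ e$, the minimality property of Lemma~\ref{le:2acycProps} forces $\gen(v,v\circ e)\subseteq\{e\}$. Combining the two observations gives $\gen(v,v\circ e)=\{e\}$. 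Finally, because $v\circ e\in[v]_\alpha$, the set $\alpha$ connects $v$ to $v\circ e$, so again by Lemma~\ref{le:2acycProps} we obtain $\{e\}=\gen(v,v\circ e)\subseteq\alpha$, i.e.\ $e\in\alpha$. As $e\in\beta$ was arbitrary, $\beta\subseteq\alpha$ follows.

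I expect the only genuinely delicate point to be the pinning-down of $\gen(v,v\circ e)$. This is exactly where 2-acyclicity enters, through the well-definedness of the unique minimal connecting set in Lemma~\ref{le:2acycProps}: without it one could not rule out that some \emph{other} minimal set, incomparable to $\{e\}$, connects $v$ and $v\circ e$---precisely the pathology exhibited by the $S_3$ example above, where no unique minimal connecting subset of generators exists. Everything else is routine bookkeeping with cosets.
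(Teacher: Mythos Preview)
Your proof is correct. The forward direction is handled identically in the paper (dismissed as ``true in general''), and your converse argument via $\gen(v,v\circ e)=\{e\}$ and Lemma~\ref{le:2acycProps} goes through without gaps.

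The paper takes a more direct route for the converse: rather than invoking the $\gen$ machinery, it assumes $e\in\beta\setminus\alpha$ and exhibits an explicit coset 2-cycle $v,\{e\},v\circ e,\alpha,v$, checking the coset cycle property from the fact that $\{e\}\cap\alpha=\emptyset$ reduces the relevant cosets to singletons $\{v\}$ and $\{v\circ e\}$. Your argument is essentially the same contradiction unwound through Lemma~\ref{le:2acycProps}: the well-definedness of $\gen$ already encodes ``no 2-cycle'', so appealing to it is equivalent to the paper's explicit construction. The paper's version is slightly more self-contained (it touches only the definition of 2-acyclicity), while yours illustrates that once $\gen$ is available the result is an immediate corollary; neither approach has a real advantage over the other.
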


\begin{proof}
 The direction from left to right is, of course,
 true in general.
 
 For the converse direction, let $e\in\beta$, and
 assume $e\notin\alpha$. Since $e\neq 1$,
 the element $v'=v\circ e\in [v]_e$
 is different from~$v$. Additionally,
 $v'\in[v]_e\subseteq[v]_\beta\subseteq[v]_\alpha$
 implies an $\alpha$-path from~$v$ to~$v'$.
 However, this means that $v,\{e\},v',\alpha,v$
 is a coset cycle of length~2 since
 $$ [v]_{\{e\}\cap\alpha} \cap [v']_{\alpha\cap\{e\}}
  = [v]_\emptyset \cap [v']_\emptyset
  = \{v\} \cap \{v'\}
  =\emptyset,
 $$
 which contradicts the assumption of 2-acyclicity.
\end{proof}

Lemma~\ref{le:cosetCut} gives another characterisation
of the coset cycle property in 2-acyclic \ca graphs that
provides a helpful tool in dealing with coset cycles;
it's proof is straightforward.

\begin{lemma}\label{le:cosetCut}
If~$G$ is a 2-acyclic \ca group and $(v_i,\alpha_i)_{i\in\Z_m}$ a finite
sequence with $[v_i]_{\alpha_i}=[v_\ip]_{\alpha_i}$, for all $i\in\Z_m$.
Then for all $i\in\Z_m$
\[
	[v_{i}]_{\alpha_{i-1}\cap \alpha_i}\cap [v_{i+1}]_{\alpha_i\cap \alpha_{i+1}}=
	[v_{i-1}]_{\alpha_{i-1}} \cap [v_{i}]_{\alpha_{i}} \cap [v_{i+1}]_{\alpha_{i+1}}.
\]
\end{lemma}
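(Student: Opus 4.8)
The plan is to show that the two sides of the claimed identity collapse to one and the same intermediate expression, namely $[v_i]_{\alpha_{i-1}\cap\alpha_i}\cap[v_{i+1}]_{\alpha_{i+1}}$, using only the cut characterisation of 2-acyclicity from Lemma~\ref{le:cutChar} together with the adjacency hypothesis. The only facts I would extract from the sequence are the two instances of $[v_j]_{\alpha_j}=[v_{j+1}]_{\alpha_j}$ that touch the index~$i$: taking $j=i-1$ yields $[v_{i-1}]_{\alpha_{i-1}}=[v_i]_{\alpha_{i-1}}$, and taking $j=i$ yields $[v_i]_{\alpha_i}=[v_{i+1}]_{\alpha_i}$. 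These let me transport the base point of a coset between the two relevant indices whenever the generator set is the matching $\alpha_j$, which is what makes Lemma~\ref{le:cutChar} applicable.

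First I would rewrite the left-hand side. Since both factors of $[v_{i+1}]_{\alpha_i\cap\alpha_{i+1}}$ share the base point $v_{i+1}$, Lemma~\ref{le:cutChar} splits it as $[v_{i+1}]_{\alpha_i}\cap[v_{i+1}]_{\alpha_{i+1}}$. Replacing $[v_{i+1}]_{\alpha_i}$ by $[v_i]_{\alpha_i}$ via the second adjacency equation turns the left-hand side into $[v_i]_{\alpha_{i-1}\cap\alpha_i}\cap[v_i]_{\alpha_i}\cap[v_{i+1}]_{\alpha_{i+1}}$. Now $[v_i]_{\alpha_{i-1}\cap\alpha_i}$ and $[v_i]_{\alpha_i}$ again share the base point $v_i$, so Lemma~\ref{le:cutChar} contracts them to $[v_i]_{(\alpha_{i-1}\cap\alpha_i)\cap\alpha_i}=[v_i]_{\alpha_{i-1}\cap\alpha_i}$, leaving exactly $[v_i]_{\alpha_{i-1}\cap\alpha_i}\cap[v_{i+1}]_{\alpha_{i+1}}$.

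Next I would treat the right-hand side symmetrically. Using the first adjacency equation to replace $[v_{i-1}]_{\alpha_{i-1}}$ by $[v_i]_{\alpha_{i-1}}$, the right-hand side reads $[v_i]_{\alpha_{i-1}}\cap[v_i]_{\alpha_i}\cap[v_{i+1}]_{\alpha_{i+1}}$. The first two factors share the base point $v_i$, so Lemma~\ref{le:cutChar} merges them into $[v_i]_{\alpha_{i-1}\cap\alpha_i}$, and the right-hand side likewise becomes $[v_i]_{\alpha_{i-1}\cap\alpha_i}\cap[v_{i+1}]_{\alpha_{i+1}}$. Comparing the two computations gives the asserted equality.

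There is no deep obstacle here; the argument is a bookkeeping exercise and Lemma~\ref{le:cutChar} does all the real work. The one point demanding care is that every application of Lemma~\ref{le:cutChar} presupposes a \emph{common base vertex} for the two cosets being intersected, which is precisely why the adjacency equations must be invoked first, to align the base points ($v_{i+1}\to v_i$ on the left, $v_{i-1}\to v_i$ on the right) before any contraction. Since each rewriting step is a genuine equality of sets rather than a mere inclusion, no separate verification of reverse inclusions is required.
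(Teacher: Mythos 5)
Your proposal is correct and uses essentially the same argument as the paper: both rely on Lemma~\ref{le:cutChar} to split and merge cosets with a common base point and on the two adjacency equations $[v_{i-1}]_{\alpha_{i-1}}=[v_i]_{\alpha_{i-1}}$ and $[v_i]_{\alpha_i}=[v_{i+1}]_{\alpha_i}$ to align base points. The only cosmetic difference is that you reduce both sides to a common intermediate expression, while the paper writes one chain of equalities from the left-hand side to the right-hand side.
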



\subsection{Dual hypergraphs}\label{sec:dualHypgergraphs}

In this section, we define for every \ca graph~$G$ an
associated structure~$d(G)$, the \emph{dual hypergraph
of~$G$}, and present the first connections between
coset acyclicity for \ca graphs and $\alpha$-acyclicity
for their dual hypergraphs.

\begin{definition}\label{def:dualhypergraphdef}
Let $G=(V,(R_\alpha)_{\alpha\in E})$ 
be a \ca graph, and define the equivalence relation
$R_\alpha:=\mathsf{TC}(\bigcup_{e \in \alpha} R_e)$,
for all $\alpha\subseteq E$ ($\mathsf{TC}$ denotes the transitive closure).
The \emph{dual hypergraph}
of~$G$ is the vertex-coloured hypergraph
 \begin{align*}\textstyle
 d(G) &:= (d(V),S, (Q_\alpha)_{\alpha\subseteq E}) \mbox{ where}
\\
d(V)&:= \dot{\bigcup}_{\alpha\subseteq E} Q_\alpha \;\;\mbox{ for }
      Q_\alpha :=  V/R_\alpha, \\
    S &:= \{\brck{v} \subseteq d(V): v\in V \}.
 \end{align*}
\end{definition}

As the name suggests, everything in the dual hypergraph is flipped.
The vertices of~$G$ are the hyperedges of~$d(G)$,
the $\alpha$-cosets of~$G$ are the $\alpha$-coloured
vertices of~$d(G)$.
Furthermore, Lemma~\ref{le:2acycProps} implies that every intersection
between hyperedges can be described by the unique set of generators $\gen(\vbar)$.
This means, for every $v\in\vbar$ and every $\alpha\subseteq E$:
$$ [v]_\alpha \in \bigcap_{v\in\vbar} \brck{v}
   \qquad \Leftrightarrow \qquad
   \alpha \supseteq \gen(\vbar)
$$

The notions of acyclicity for \ca graphs and hypergraph
acyclicity are directly connected.
Otto showed that the dual hypergraph~$d(G)$ is $n$-acyclic
if~$G$ is coset $n$-acyclic, and we show the other direction
for 2-acyclic~$G$.

\begin{lemma}\label{le:acycCayleyHyp}\cite{Otto12JACM}
 For~$n\geq 3$, if $G$ is an $n$-acyclic \ca graph,
 then $d(G)$ is an $n$-acyclic hypergraph.
\end{lemma}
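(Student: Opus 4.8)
The plan is to unfold the definition of an $n$-acyclic hypergraph — namely $n$-conformality together with $n$-chordality — and to show that a violation of either property in $d(G)$ produces a coset cycle of length at most $n$ in $G$, contradicting Definition~\ref{def:OttoNacyclic}. The dictionary between the two worlds is immediate from the Remark following Definition~\ref{def:dualHyperedge}: two cosets $c=[v]_\alpha$ and $c'=[u]_\beta$, viewed as vertices of $d(G)$, are adjacent in the Gaifman graph $G(d(G))$ exactly when $c\cap c'\neq\emptyset$ as subsets of $V[G]$, and a family of cosets lies in a common hyperedge $\brck{w}$ exactly when the corresponding subsets of $V[G]$ share the element $w$. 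Since $n\geq 3$, $G$ is in particular $2$-acyclic, so throughout I may use the cut characterisation (Lemma~\ref{le:cutChar}), which in particular makes the intersection of two cosets again a coset, and, crucially, the reformulation of the coset cycle property in Lemma~\ref{le:cosetCut}.

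For $n$-conformality I would show that every family $c_1,\dots,c_k$ of pairwise intersecting cosets has a common element, by induction on $k$ reducing to the case $k=3$. In the base case $c_1,c_2,c_3$ pairwise intersect but have no common point; choosing $u_1\in c_1\cap c_3$, $u_2\in c_1\cap c_2$, $u_3\in c_2\cap c_3$ and letting $\beta_i$ be the generator set defining $c_i$, the step condition $[u_i]_{\beta_i}=[u_{i+1}]_{\beta_i}=c_i$ holds by construction, and Lemma~\ref{le:cosetCut} rewrites each instance of the coset cycle property as $c_1\cap c_2\cap c_3$, which is empty. This yields a coset $3$-cycle, contradicting $3$-acyclicity. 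For the inductive step ($k\geq4$) I would replace $c_1,c_2$ by the single coset $a:=c_1\cap c_2$ (a coset by Lemma~\ref{le:cutChar}); the $3$-case just proved guarantees $a\cap c_j=c_1\cap c_2\cap c_j\neq\emptyset$ for every $j$, so $a,c_3,\dots,c_k$ is again a pairwise intersecting family of cosets, now of size $k-1$, with the same empty overall intersection. Iterating reaches $k=3$, the contradiction. Hence every clique of $G(d(G))$ — in particular every one of size at most $n$ — sits inside a hyperedge.

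For $n$-chordality I would start from a chordless cycle $c_1,\dots,c_\ell$ in $G(d(G))$ with $4\leq\ell\leq n$, so consecutive cosets meet while all non-consecutive ones are disjoint; in particular $c_i\cap c_{i+2}=\emptyset$ for every $i\in\Z_\ell$ because $\ell\geq4$. Picking $u_i\in c_i\cap c_{i+1}$ and letting $\beta_i$ be the generator set defining $c_{i+1}$ makes the step condition $[u_i]_{\beta_i}=[u_{i+1}]_{\beta_i}=c_{i+1}$ automatic. Applying Lemma~\ref{le:cosetCut} once more turns the coset cycle property at position $i$ into the condition $c_i\cap c_{i+1}\cap c_{i+2}=\emptyset$, which holds because that triple intersection is contained in $c_i\cap c_{i+2}=\emptyset$. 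Thus $((u_i,\beta_i))_{i\in\Z_\ell}$ is a coset cycle of length $\ell\leq n$, contradicting $n$-acyclicity; so every such Gaifman cycle has a chord.

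The main obstacle is the conformality argument, and specifically verifying the coset cycle property. The tempting move — to walk once around the clique picking $u_i\in c_i\cap c_{i+1}$, as in the chordality case — fails here: for a \emph{minimal} family with no common point every triple still intersects, so the expression $c_i\cap c_{i+1}\cap c_{i+2}$ need not vanish. The device that rescues the argument is to collapse the size by replacing two cosets with their intersection (legitimate precisely because $2$-acyclicity makes that intersection a coset) until one lands on a genuine triple, where Lemma~\ref{le:cosetCut} finally forces the property. The chordality half, by contrast, is direct, since chordlessness immediately supplies the disjointness $c_i\cap c_{i+2}=\emptyset$ that the coset cycle property needs.
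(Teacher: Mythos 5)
The paper states this lemma purely as a citation from \cite{Otto12JACM} and supplies no proof of its own, so there is no in-paper argument to measure yours against; judged on its merits, your proof is correct and self-contained given the paper's toolkit. Your dictionary is the right one: adjacency in the Gaifman graph of $d(G)$ is non-empty intersection of cosets, and containment in a hyperedge $\brck{w}$ is possession of the common element $w$. For chordality, choosing $u_i\in c_i\cap c_{i+1}$ and labelling the step by the generator set of $c_{i+1}$ makes the step condition automatic, and Lemma~\ref{le:cosetCut} (available because $n\geq 3$ forces $2$-acyclicity) turns the coset cycle property at each position into $c_i\cap c_{i+1}\cap c_{i+2}=\emptyset$, which chordlessness delivers since $\ell\geq 4$ makes $c_i$ and $c_{i+2}$ non-neighbours on the cycle; this is exactly the reverse of the translation the paper does carry out in detail when proving the converse implication for $2$-acyclic $G$, so the two directions fit together cleanly. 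For conformality you correctly diagnose that the naive ``walk once around the clique'' move fails and that the Helly-style induction is what is needed: the triple case produces a genuine coset $3$-cycle (note $u_1,u_2,u_3$ are automatically distinct, since any coincidence would already be a common point of all three cosets, and Definition~\ref{def:OttoCosetCycle} demands nothing beyond the step condition and the coset cycle property), and the reduction step is legitimate because Lemma~\ref{le:cutChar} makes the intersection of two meeting cosets again a coset. Your induction in fact establishes the full Helly property for cosets from $3$-acyclicity alone, which is stronger than the required $n$-conformality but entirely harmless.
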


\begin{lemma}
 Let~$G$ be a $2$-acyclic \ca graph.
 For~$n\geq 3$, if $d(G)$ is an $n$-acyclic hypergraph,
 then~$G$ is $n$-acyclic.
\end{lemma}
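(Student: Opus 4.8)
The plan is to argue by contraposition, proving the converse form: assuming $G$ is not $n$-acyclic, I will exhibit a violation of $n$-acyclicity in $d(G)$. Since $G$ is $2$-acyclic it contains no coset $2$-cycle, so a shortest coset cycle $((v_i,\alpha_i))_{i\in\Z_m}$ has length $m$ with $3\le m\le n$; fix one of minimal length and write $c_i:=[v_i]_{\alpha_i}=[v_\ip]_{\alpha_i}$ for the coset traversed by the $i$-th step. Two preliminary observations drive everything. First, consecutive cosets are adjacent in the Gaifman graph $G(d(G))$: both $c_\im=[v_i]_{\alpha_\im}$ and $c_i=[v_i]_{\alpha_i}$ lie in the dual hyperedge $\brck{v_i}$, so $v_i\in c_\im\cap c_i$; moreover $c_\im\ne c_i$ as vertices of $d(G)$, because $c_\im=c_i$ would force $\alpha_\im=\alpha_i$, and then $v_\ip$ would lie in $[v_i]_{\alpha_\im\cap\alpha_i}\cap[v_\ip]_{\alpha_i\cap\alpha_\ip}$, contradicting the coset cycle property at $i$. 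Second, by Lemma~\ref{le:cosetCut} the coset cycle property is equivalent, in the $2$-acyclic $G$, to $c_\im\cap c_i\cap c_\ip=\emptyset$ for every $i$: no three consecutive cosets share a vertex.

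The case $m=3$ is then immediate and rests on conformality. The cosets $c_0,c_1,c_2$ are pairwise adjacent and, by the first observation, pairwise distinct, so $\{c_0,c_1,c_2\}$ is a clique of size $3\le n$ in $G(d(G))$. If some hyperedge $\brck{w}$ contained all three, then $w\in c_0\cap c_1\cap c_2$, contradicting $c_0\cap c_1\cap c_2=\emptyset$. Hence this clique lies in no hyperedge, $d(G)$ fails $n$-conformality, and is not $n$-acyclic.

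For $m\ge 4$ the goal is to show that the closed walk $c_0,c_1,\dots,c_{m-1}$ is a chordless cycle in $G(d(G))$: its length satisfies $4\le m\le n$ and exceeds $3$, so chordlessness immediately refutes $n$-chordality. Chordlessness means $c_i\cap c_j=\emptyset$ for all non-consecutive $i,j$ (this also yields distinctness), so suppose instead some non-consecutive pair meets; among all such chords choose one, say between $c_0$ and $c_j$ with $2\le j\le m-2$, whose span (the shorter arc length) is minimal, so that no two non-consecutive cosets strictly inside the arc $c_0,\dots,c_j$ intersect. From a witness $w\in c_0\cap c_j$ one gets $[v_0]_{\alpha_0\cup\alpha_j}=[w]_{\alpha_0\cup\alpha_j}=[v_j]_{\alpha_0\cup\alpha_j}$, so $\delta:=\gen(v_0,v_j)\subseteq\alpha_0\cup\alpha_j$ is well defined by Lemma~\ref{le:2acycProps} and there is a genuine $\delta$-step from $v_j$ back to $v_0$. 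I then reroute the short arc into the strictly shorter cyclic sequence $((v_0,\alpha_0),\dots,(v_{j-1},\alpha_{j-1}),(v_j,\delta))$ of length $j+1\in[3,m-1]$, whose cosets are $c_0,\dots,c_{j-1}$ together with the new coset $c':=[v_0]_\delta$.

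The crux, which I expect to be the main obstacle, is to verify that this rerouted sequence is again a coset cycle, for then its length lies in $[3,m-1]$ and contradicts the minimality of $m$, forcing chordlessness. At every position not adjacent to the new step the required triple of cosets coincides with a triple from the original cycle and is therefore empty; the only genuinely new conditions are the three surrounding $c'$, namely $c_{j-2}\cap c_{j-1}\cap c'=\emptyset$, $c_{j-1}\cap c'\cap c_0=\emptyset$, and $c'\cap c_0\cap c_1=\emptyset$. To establish these I would rewrite the intersections with $c'$ using Lemma~\ref{le:cutChar} (for instance $c_0\cap c'=[v_0]_{\alpha_0\cap\delta}$), invoke the minimal-connector property of $\delta=\gen(v_0,v_j)$ from Lemma~\ref{le:2acycProps}, and use the minimal-span choice, which prevents any unwanted incidences inside the arc. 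Should this not place the sequence exactly into coset-cycle form, a residual redundant position (one where the triple intersection is non-empty) can be eliminated by merging its two adjacent steps into the union of their generator sets, an operation that preserves the relation $[\,\cdot\,]_{\alpha}=[\,\cdot\,]_{\alpha}$ and strictly shortens the sequence, until a coset cycle of length in $[3,m-1]$ remains. In all cases this contradicts minimality, so the original $m$-cycle of cosets is chordless and $d(G)$ fails $n$-chordality, completing the contrapositive.
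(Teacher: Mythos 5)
Your overall strategy coincides with the paper's: take a minimal coset cycle of length $m$, pass to the induced cycle of cosets in the Gaifman graph of $d(G)$, kill $m=3$ via $3$-conformality and Lemma~\ref{le:cosetCut}, and rule out $m\geq 4$ by showing that a chord of minimal span would yield a shorter coset cycle. The $m=3$ case and the reduction of the coset cycle property to ``no three consecutive cosets meet'' are fine. The gap is exactly where you predict it: the construction of the shorter cycle and the verification that it is a \emph{coset} cycle. The paper takes the chord witness $u\in c_i\cap c_j$ as the new base point and keeps the \emph{original} generator sets, forming $u,\alpha_i,v_{i+1},\dots,v_j,\alpha_j,u$; every coset of this cycle lies on the original cycle, so the only two new conditions rewrite via Lemma~\ref{le:cosetCut} as $c_j\cap c_i\cap c_{i+1}=\emptyset$ and $c_{j-1}\cap c_j\cap c_i=\emptyset$, and non-emptiness of either is a strictly shorter chord (or, when $j=i+2$, a violation of the original coset cycle property). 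Your construction instead closes the arc with a brand-new step $(v_j,\delta)$, $\delta=\gen(v_0,v_j)$, whose coset $c'=[v_0]_\delta$ does \emph{not} lie on the original cycle. The three conditions involving $c'$ therefore do not reduce to triple intersections of original cosets: for instance $c'\cap c_0\cap c_1=[v_0]_{\delta\cap\alpha_0}\cap[v_1]_{\alpha_0\cap\alpha_1}$, and since $\delta\subseteq\alpha_0\cup\alpha_j$ is in general incomparable with $\alpha_{m-1}$ and with $\alpha_j$, neither the original coset property nor chord-minimality applies to it directly. You explicitly defer this verification (``I would rewrite\dots''), and it is the entire content of the induction step; as written the proof is incomplete.

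The fallback you offer does not repair this. Merging two adjacent steps $\beta_{i-1},\beta_i$ into $\beta_{i-1}\cup\beta_i$ when the triple intersection at position $i$ is non-empty shortens the sequence, but it \emph{enlarges} the intersection sets at the two neighbouring positions (e.g.\ $\beta_{i-2}\cap\beta_{i-1}$ becomes $\beta_{i-2}\cap(\beta_{i-1}\cup\beta_i)$), so conditions that held before the merge can fail afterwards; the process can cascade and there is no argument that it terminates at a genuine coset cycle of length at least $3$ rather than collapsing entirely --- indeed, guarding against exactly this kind of collapse is the point of the coset cycle property. To close the gap, either prove the three conditions involving $c'$ (this appears to need a genuinely new argument), or switch to the paper's re-basing construction, where chord-minimality and Lemma~\ref{le:cosetCut} do all the work.
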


\begin{proof}
 Let $((v_i,\alpha_i))_{i\in\Z_m}$ be a coset cycle
 of minimal length in~$G$. We need to show that $m>n$.
 The cycle $((v_i,\alpha_i))_{i\in\Z_m}$ in~$G$ induces
 an associated cycle $(([v_i]_{\alpha_i},\brck{v_\ip}))_{i\in\Z_m}$
 in the dual hypergraph~$d(G)$ because
 $[v_i]_{\alpha_i}\in\brck{v_\ip}$ since $(v_i,v_\ip)\in R_{\alpha_i}$.
 If we show that this cycle is chordless, then $n$-acyclicity
 of~$d(G)$ implies $m>n$.
 
 The length of $((v_i,\alpha_i))_{i\in\Z_m}$ is at least be~$3$
 because~$G$ is 2-acyclic. If it is~$3$, then the induced
 cycle $(([v_i]_{\alpha_i},\brck{v_\ip}))_{i\in\Z_m}$ must
 be contained in some hyperedge~$\brck{v}$ because~$d(G)$ is,
 in particular, 3-conformal. However, the definition of~$d(G)$
 and 2-acyclicity of~$G$ together with Lemma~\ref{le:cosetCut} imply
 \begin{align*}
  &[v_1]_{\alpha_1}\cap[v_2]_{\alpha_2}\cap[v_3]_{\alpha_3}\in\brck{v} \\
  \Rightarrow\quad
  &v\in[v_1]_{\alpha_1}\cap[v_2]_{\alpha_2}\cap[v_3]_{\alpha_3}
   =[v_1]_{\alpha_3\cap\alpha_1}\cap[v_2]_{\alpha_1\cap\alpha_2};
 \end{align*}
 this violates the coset cycle property
 $[v_1]_{\alpha_3\cap\alpha_1}\cap[v_2]_{\alpha_1\cap\alpha_2}=\emptyset$.
 Hence, $m$ must be at least 4.
 
 Now, assume that the cycle $(([v_i]_{\alpha_i},\brck{v_\ip}))_{i\in\Z_m}$
 has a chord, i.e.\ there is
 some hyperedge~$\brck{u}$ and there are $1\leq i,j\leq m$
 with $j>i+1$ such that $[v_i]_{\alpha_i},[v_j]_{\alpha_j}\in\brck{u}$.
 First, we choose~$\brck{u}$ such that the distance between~$[v_i]_{\alpha_i}$
 and~$[v_j]_{\alpha_j}$ on the cycle is \emph{minimal}, i.e.\ there are
 no other vertices on the cycle that are connected by a chord and have
 a shorter distance on the cycle than~$[v_i]_{\alpha_i}$ and~$[v_j]_{\alpha_j}$.
 Then
 $$[u]_{\alpha_i},\brck{v_\ip},[v_\ip]_{\alpha_\ip},\dots,
  [v_\jm]_{\alpha_\jm},\brck{v_j},[u]_{\alpha_j},\brck{u},[u]_{\alpha_i}$$
 is a cycle in $d(G)$ since $[u]_{\alpha_i}=[v_i]_{\alpha_i}$ and
 $[u]_{\alpha_j}=[v_j]_{\alpha_j}$. This cycle in the dual hypergraph
 induces a cycle 
 $$ u,\alpha_i,v_\ip,\alpha_\ip,\dots,\alpha_\jm,v_j,\alpha_j,u$$
 in~$G$ of length shorter than~$m$. If we can show that this cycle
 is also a \emph{coset} cycle, then the chord~$\brck{u}$ could
 not exists because it would contradict that we chose
 $((v_i,\alpha_i))_{i\in\Z_m}$ as a coset cycle of minimal length.
 
 We need to check the coset property at~$u$,
 i.e.\ $[u]_{\alpha_j\cap\alpha_i}\cap[v_\ip]_{\alpha_i\cap\alpha_\ip}=\emptyset$
 and $[v_j]_{\alpha_\jm\cap\alpha_j}\cap[u]_{\alpha_j\cap\alpha_i}=\emptyset$.
 Assume there is some $w\in[u]_{\alpha_j\cap\alpha_i}\cap[v_\ip]_{\alpha_i\cap\alpha_\ip}$.
 2-acyclicity of~$G$ and Lemma~\ref{le:cosetCut} imply
 $w \in [u]_{\alpha_j}\cap[v_i]_{\alpha_i}\cap[v_\ip]_{\alpha_\ip}
       =[v_j]_{\alpha_j}\cap[v_i]_{\alpha_i}\cap[v_\ip]_{\alpha_\ip}$.
 We assumed~$\brck{u}$ to be such that the distance between~$[v_i]_{\alpha_i}$
 and~$[v_j]_{\alpha_j}$ on the cycle is minimal, hence $j>i+2$ cannot be the case
 because $[v_\ip]_{\alpha_\ip},[v_j]_{\alpha_j}\in\brck{w}$ have shorter
 distance. This leaves $j=i+2$, which implies
 \[
  \emptyset \neq [v_i]_{\alpha_i}\cap[v_\ip]_{\alpha_\ip}\cap[v_{i+2}]_{\alpha_{i+2}}
                =[v_\ip]_{\alpha_i\cap\alpha_\ip}\cap[v_{i+2}]_{\alpha_\ip\cap\alpha_{i+2}}.
 \]
 But this contradicts the coset property of the given coset cycle. Showing
 $[v_j]_{\alpha_\jm\cap\alpha_j}\cap[u]_{\alpha_j\cap\alpha_i}=\emptyset$
 works analogously.
 
 Thus, we found a coset cycle that is shorter than~$m$.
 This contradicts the choice of $((v_i,\alpha_i))_{i\in\Z_m}$ as a
 coset cycle of minimal length in~$G$.
 This means that $(([v_i]_{\alpha_i},\brck{v_\ip}))_{i\in\Z_m}$
 must be chordless, which implies $m>n$ by $n$-acyclicity of~$d(G)$.
\end{proof}

Thus, the previous lemmas show that an acyclic \ca graph is tree-like
in the sense that its dual hypergraph is tree-decomposable.

\section{Analysis of paths and distances}\label{sec:paths}

Coset cycles generalise the graph-theoretic notion
of a cycle for \ca graphs.
\emph{Coset paths} generalise the graph-theoretic notion of a path
in the same way.
These coset paths and their behaviour in $n$-acyclic \ca graphs
are the subject of this chapter.

Many of the various definitions and notions that we
introduce from now on only make sense in 2-acyclic
\ca graphs, because they are based on
the set $\gen(\vbar)$.
Therefore, and because every \ca graph has
a 2-acyclic covering, we make the following assumption
for the remainder of this section.

\begin{pprov}
 Every \ca graph is assumed to be 2-acyclic.
\end{pprov}

\begin{definition}[Coset path]\label{def:cosetPath}
 Let~$G$ be a \ca graph.
 A \emph{coset path of length~$\ell\geq 1$}
 is a labelled path
 $v_1,\alpha_1,v_2,\alpha_2,\dots,\alpha_\ell,v_{\ell+1}$
 such that, for $1\leq i \leq \ell$,
 \[
  [v_i]_{\alpha_{i-1}\cap \alpha_i} \cap
  [v_{i+1}]_{\alpha_{i}\cap\alpha_{i+1}}=\emptyset,
 \]
 with $\alpha_0=\alpha_{\ell+1}=\emptyset$. 
 A coset path $v_1,\alpha_1,\dots,\alpha_{\ell},v_{\ell+1}$
 of length $\ell\geq 2$ is \emph{non-trivial} if,
 for $\alpha=\gen(v_1,v_\ellp)$, for all $1\leq i\leq\ell$,
 \[ [v_1]_\alpha \nsubseteq [v_i]_{\alpha_i}. \]
 A coset path $v_1,\alpha_1,\dots,\alpha_{\ell},v_{\ell+1}$
 of length $\ell\geq 2$ is an \emph{inner} path if,
 for $\alpha=\gen(v_1,v_\ellp)$, for all $1\leq i\leq \ell$,
 \[ [v_i]_{\alpha_i}\subsetneq [v_1]_\alpha. \]
 A non-trivial coset path from~$v$ to~$u \not=v$ is
 \emph{minimal} if there is no shorter non-trivial coset
 path from~$v$ to~$u$.
\end{definition}

\begin{remark}
 Non-trivial and inner coset paths are only
 well-defined in 2-acyclic graphs.
\end{remark}

\begin{obs}
 Inner coset paths are non-trivial.
\end{obs}

In other words, a coset path is a path that links
two consecutive vertices not via a single edge or
generator, but via a coset in a way that respects
the coset property of coset cycles in every step.
An analogue of Lemma~\ref{le:cosetCut} is also true
for coset paths.

\begin{lemma}\label{le:cosetCutPath}
If~$G$ is a \ca graph and $v_1,\alpha_1,v_2,\dots,v_{\ell},\alpha_\ell,v_{\ell+1}$
a path, then, for all $2 \leq i\leq\ell$,
\[
	[v_{i}]_{\alpha_{i-1}\cap \alpha_i}\cap [v_{i+1}]_{\alpha_i\cap \alpha_{i+1}}=
	[v_{i-1}]_{\alpha_{i-1}} \cap [v_{i}]_{\alpha_{i}} \cap [v_{i+1}]_{\alpha_{i+1}},
\]
with $\alpha_{\ell+1}=\emptyset$.
\end{lemma}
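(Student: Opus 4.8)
The plan is to treat this as the path analogue of the cycle identity in Lemma~\ref{le:cosetCut} and to reuse essentially the same chain of equalities. The one input that must be translated from the cycle setting is the coset-equality hypothesis: in a labelled path the edge condition $\{v_i,v_\ip\}\in R_{\alpha_i}$ means $v_\ip\in[v_i]_{\alpha_i}$, hence $[v_i]_{\alpha_i}=[v_\ip]_{\alpha_i}$ for every $1\leq i\leq\ell$. This plays exactly the role that the explicit hypothesis $[v_i]_{\alpha_i}=[v_\ip]_{\alpha_i}$ played in Lemma~\ref{le:cosetCut}. Since the standing proviso makes $G$ 2-acyclic, Lemma~\ref{le:cutChar} is available and supplies $[v]_\gamma\cap[v]_\delta=[v]_{\gamma\cap\delta}$ for every vertex~$v$ and all $\gamma,\delta\subseteq E$.

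First I would fix $i$ with $2\leq i\leq\ell$ and split both factors of the left-hand side using Lemma~\ref{le:cutChar}, writing $[v_i]_{\alpha_\im\cap\alpha_i}=[v_i]_{\alpha_\im}\cap[v_i]_{\alpha_i}$ and $[v_\ip]_{\alpha_i\cap\alpha_\ip}=[v_\ip]_{\alpha_i}\cap[v_\ip]_{\alpha_\ip}$. Next I would rewrite two of the four resulting factors via the edge conditions: the edge at step $i-1$ gives $[v_i]_{\alpha_\im}=[v_\im]_{\alpha_\im}$, and the edge at step $i$ gives $[v_\ip]_{\alpha_i}=[v_i]_{\alpha_i}$. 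After this substitution the factor $[v_i]_{\alpha_i}$ occurs twice and collapses, leaving exactly $[v_\im]_{\alpha_\im}\cap[v_i]_{\alpha_i}\cap[v_\ip]_{\alpha_\ip}$, which is the claimed right-hand side.

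The only point that needs care --- a bookkeeping matter rather than a genuine obstacle --- is the availability of the two edge conditions together with the boundary convention. The edge at step $i-1$ requires $i\geq 2$ and the edge at step $i$ requires $i\leq\ell$, both guaranteed by the stated range $2\leq i\leq\ell$; in particular no wrap-around at the endpoints is needed, which is precisely why the path version excludes the extreme indices that the cyclic version of Lemma~\ref{le:cosetCut} includes. For the top index $i=\ell$ the convention $\alpha_\ellp=\emptyset$ enters harmlessly: since $\alpha_\ell\cap\alpha_\ellp=\emptyset$ we have $[v_\ellp]_{\alpha_\ell\cap\alpha_\ellp}=[v_\ellp]_\emptyset=\{v_\ellp\}=[v_\ellp]_{\alpha_\ellp}$, so Lemma~\ref{le:cutChar} and the computation above apply verbatim, the empty label acting as a neutral element. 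I would present the result as a short displayed chain of equalities, mirroring the proof of Lemma~\ref{le:cosetCut}.
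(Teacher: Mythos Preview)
Your proposal is correct and matches the paper's approach exactly: the paper's proof simply reads ``Exactly as Lemma~\ref{le:cosetCut},'' and your chain of equalities is precisely that argument, with the edge conditions of the path supplying $[v_i]_{\alpha_i}=[v_\ip]_{\alpha_i}$ in place of the cycle hypothesis. Your remarks on the index range and the boundary convention $\alpha_{\ell+1}=\emptyset$ are accurate bookkeeping and add nothing beyond what the direct transcription of Lemma~\ref{le:cosetCut} already covers.
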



The following sections develop a theory of coset
paths in $n$-acyclic \ca graphs.

\subsection{Short coset paths}\label{sec:shortPaths}

If a \ca graph is $2k+1$-acyclic in the usual sense,
then every $k$-neighbourhood $N^k(v)$ induces a substructure
that is a tree. This entails that two vertices that
have a distance of at most~$k$ are connected by a unique
path of length at most~$k$. This concept generalises to coset acyclic
\ca graphs w.r.t.\ coset paths.

In an acyclic \ca graph, two distinct vertices~$v$ and~$u$
are always uniquely connected by a coset path of the form
$v_1,\{e_1\},\dots,\{e_\ell\},v_\ellp$ where
all the sets of generators are singletons.
But there might be a myriad of different recombinations
of sets of these generators that pass as proper coset paths.
However, all these paths overlap in some sense, and if the \ca
graph is $2n$-acyclic all paths of length up to~$n$ overlap
in this way. This is the content of the zipper lemma
(Lemma~\ref{le:zipper}), the central result of this section.
Let us make precise what we mean by \emph{short}
coset paths.

\begin{definition}
 Let~$G$ be a~\ca graph that is $2n$-acyclic. We call
 a coset path \emph{short} if its length is $\leq n$.
\end{definition}

Often we do not make it explicit to what degree a \ca graph is
acyclic. Instead, we write that a \ca graph~$G$ is
\emph{sufficiently} acyclic, i.e.\ there is some
$n\in\N$ such that~$G$ is $n$-acyclic and
all the arguments go through.

Essentially, the zipper lemma states that in a sufficiently
acyclic \ca graph two short coset paths that both start at
the same vertex~$v$ and end at the same vertex~$u$ overlap non-trivially
at \emph{both} ends. Thus, multiple applications of the
zipper lemma imply that two short coset paths of this kind
behave like a zipper that can be closed from both ends.
Furthermore, the zipper lemma
implies that, for all pairs of vertices~$(v,u)$,
there is a unique minimal set of generators~$\alpha_0$
such that $\alpha_0\subseteq\alpha_1$, for all
\emph{short} coset paths $v,\alpha_1,\dots,\alpha_\ell,u$.
This set~$\alpha_0$ can be interpreted as
the direction one \emph{has to} take if one wants to
move from~$v$ to~$u$ on a short coset path.

\medskip
In order to prove the zipper lemma, we begin with
considering short coset paths
$v_1,\alpha_1,v_2,\dots,v_\ell,\alpha_\ell,v_1$
that start and end at the same vertex~$v_1$.
Such a path may differ from a coset
cycle regarding the overlaps at the ends. If
$v_1,\alpha_1,v_2,\dots,v_\ell,\alpha_\ell,v_1$ is just a \emph{path},
we can by definition only assume
\[
 [v_1]_{\emptyset\cap\alpha_1}\cap[v_2]_{\alpha_1\cap\alpha_2}=\emptyset
 \quad\mbox{and}\quad
 [v_\ell]_{\alpha_\ellm\cap\alpha_\ell}\cap[v_1]_{\alpha_\ell\cap\emptyset}=\emptyset,
\]
i.e.\ $v_1\notin [v_2]_{\alpha_1\cap\alpha_2}$ and
$v_1\notin [v_\ell]_{\alpha_\ellm\cap\alpha_\ell}$,
but not that it is a complete coset \emph{cycle}, i.e.\ that also
\[
 [v_1]_{\alpha_\ell\cap\alpha_1}\cap[v_2]_{\alpha_1\cap\alpha_2}=\emptyset
 \quad\mbox{and}\quad
 [v_\ell]_{\alpha_\ellm\cap\alpha_\ell}\cap[v_1]_{\alpha_\ell\cap\alpha_1}=\emptyset.
\]
Hence, these cyclic coset paths are not directly ruled out
by acyclicity but by the following lemma.

\begin{lemma}\label{le:cyclicZipper}
 Let~$v$ be a vertex in a \ca graph~$G$. If~$G$ is $n$-acyclic, 
 then there is no coset path of length up to~$n$ that starts
 and ends at~$v$.
\end{lemma}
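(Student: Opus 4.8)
The plan is to prove the contrapositive in spirit: assume there is a coset path of length $\ell\leq n$ that starts and ends at $v=v_1$, and show that this forces the existence of a genuine coset \emph{cycle} of length at most $n$, contradicting $n$-acyclicity. The only gap between a cyclic coset path and a coset cycle lies at the single ``seam'' vertex $v_1$, where the path condition only gives us $[v_1]_{\emptyset\cap\alpha_1}\cap[v_2]_{\alpha_1\cap\alpha_2}=\emptyset$ and $[v_\ell]_{\alpha_{\ell-1}\cap\alpha_\ell}\cap[v_1]_{\alpha_\ell\cap\emptyset}=\emptyset$, whereas a coset cycle additionally demands the coset cycle property at $v_1$ itself, namely $[v_1]_{\alpha_\ell\cap\alpha_1}\cap[v_2]_{\alpha_1\cap\alpha_2}=\emptyset$. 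So the whole difficulty is concentrated at this one junction: I would try to repair the seam, either by verifying the missing property directly or by modifying the path near $v_1$ to close it into a cycle.

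First I would dispose of trivial cases: a path of length $\ell=1$ starting and ending at $v_1$ is impossible since the endpoints of a length-1 coset path must be distinct (indeed $[v_1]_{\alpha_1}=[v_2]_{\alpha_1}$ with $v_1=v_2$ forces nothing problematic, but a genuine path requires $v_1\neq v_2$), so we may assume $\ell\geq 2$. Next I would invoke Lemma~\ref{le:cosetCut} (which requires $2$-acyclicity, guaranteed by the standing Proviso) to rewrite the pairwise overlap conditions as triple intersections. Concretely, the missing coset cycle condition at $v_1$ is $[v_1]_{\alpha_\ell\cap\alpha_1}\cap[v_2]_{\alpha_1\cap\alpha_2}=\emptyset$, and by Lemma~\ref{le:cosetCut} this equals $[v_\ell]_{\alpha_\ell}\cap[v_1]_{\alpha_1}\cap[v_2]_{\alpha_2}$, while the analogous condition at $v_\ell$ rewrites as $[v_{\ell-1}]_{\alpha_{\ell-1}}\cap[v_\ell]_{\alpha_\ell}\cap[v_1]_{\alpha_1}$. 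This reformulation makes the seam conditions symmetric and comparable to the interior coset-path conditions.

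The key idea I would pursue is the following dichotomy at the seam. Either the two seam conditions already hold, in which case $((v_i,\alpha_i))_{i\in\Z_\ell}$ is literally a coset cycle of length $\ell\leq n$ and we are immediately done by $n$-acyclicity. Otherwise at least one of them fails, say there is some $w\in[v_\ell]_{\alpha_\ell}\cap[v_1]_{\alpha_1}\cap[v_2]_{\alpha_2}$. Here I would use such a witness $w$ to build a \emph{strictly shorter} cyclic coset path (or directly a shorter coset cycle), by short-circuiting through $w$; the triple-intersection form from Lemma~\ref{le:cosetCut} shows $w$ lies on the relevant cosets and lets me reroute the path through $w$, eliminating one step. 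This suggests an induction on the length $\ell$: a failure of the seam condition produces a cyclic coset path of length $\ell-1\leq n-1\leq n$ to which the induction hypothesis applies, while success produces a genuine coset cycle contradicting $n$-acyclicity directly. The base case $\ell=2$ would have to be checked by hand, exploiting that the two singleton seam conditions together with Lemma~\ref{le:cosetCut} leave no room for both seam properties to fail simultaneously.

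The main obstacle I anticipate is making the ``reroute through $w$'' construction rigorous so that the shortened object is again a \emph{coset path} (not merely a labelled path), i.e. verifying that the interior coset-path conditions survive the modification and that the new endpoints still coincide at a single vertex. The bookkeeping of which $\alpha_i$ attach to which vertices after deleting a step, and checking the coset cycle property at the newly created junction, is where the real work lies; I expect Lemma~\ref{le:cosetCut} and Lemma~\ref{le:2acycProps} (least-element / unique minimal connecting set of generators) to be exactly the tools needed to control these intersections and guarantee the modified path remains a valid coset path of the claimed shorter length.
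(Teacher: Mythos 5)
Your proposal is correct and follows essentially the same route as the paper: induction on the path length, the dichotomy at the seam vertex (either the sequence is already a coset cycle of length $\leq n$, or a witness $u\in[v_1]_{\alpha_\ell\cap\alpha_1}\cap[v_2]_{\alpha_1\cap\alpha_2}$ exists), and rerouting through that witness to obtain a strictly shorter cyclic coset path based at $u$. The only detail worth making explicit when you carry out the rerouting is the sub-case split the paper performs: if the witness $u$ also lies in $[v_\ell]_{\alpha_{\ell-1}\cap\alpha_\ell}$, the coset-path condition at $v_\ell$ fails for the length-$(\ell-1)$ candidate, so one drops $v_\ell$ as well and obtains a cyclic coset path of length $\ell-2$ instead, to which the (universally quantified over the base vertex) induction hypothesis still applies.
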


\begin{proof}
 The claim is shown by induction on the length~$\ell$
 of the coset path, for $1\leq\ell\leq n$.
 
 For $\ell=1$, Definition~\ref{def:cosetPath} rules out
 coset loops $v,\alpha,v$ because it implies
 \[
  \emptyset=[v]_{\emptyset\cap\alpha} \cap [v]_{\alpha\cap\emptyset}=\{v\}.
 \]
 
 For $\ell=2$, coset paths $v_1,\alpha_1,v_2,\alpha_2,v_1$
 with $v_1\notin[v_2]_{\alpha_1\cap\alpha_2}$ are ruled
 out because 2-acyclicity implies
 $$[v_1]_{\alpha_1\cap\alpha_2}=[v_2]_{\alpha_1\cap\alpha_2},$$
 leading to the contradiction $v_1\notin[v_1]_{\alpha_1\cap\alpha_2}$.
 
 For $2<\ell\leq n$, assume there are no coset paths
 of length up to~$\ell-1$ from any vertex back to itself.
 Consider a coset path
 \[
  v_1,\alpha_1,v_2,\dots,v_\ell,\alpha_\ell,v_\ellp
 \]
 of length~$\ell$ with $v_1=v_\ellp$. That~$G$ is $n$-acyclic implies
 \[
  [v_1]_{\alpha_\ell\cap\alpha_1}\cap[v_2]_{\alpha_1\cap\alpha_2}\neq\emptyset
  \quad \mbox{or} \quad
  [v_\ell]_{\alpha_\ellm\cap\alpha_\ell}\cap[v_1]_{\alpha\ell\cap\alpha_1}\neq\emptyset.
 \]
 W.l.o.g.\ we assume there is some
 $u\in[v_1]_{\alpha_\ell\cap\alpha_1}\cap[v_2]_{\alpha_1\cap\alpha_2}$.
 If $u\notin[v_\ell]_{\alpha_\ellm\cap\alpha_\ell}$, then
 $$u,\alpha_2,v_3,\alpha_3,v_4,\dots,v_\ell,\alpha_\ell,u$$
 is a coset path of length $\ell-1$ from~$u$ to itself. Otherwise,
 $$u,\alpha_2,v_3,\alpha_3,v_4,\dots,v_\ellm,\alpha_\ellm,u$$
 is a coset path of length $\ell-2$ from~$u$ to itself.
 In both cases, such a coset path cannot exist according
 to the induction hypothesis.
\end{proof}

The proof of Lemma~\ref{le:cyclicZipper} shows that a short
cyclic path cannot exist in a sufficiently acyclic graph because
it would collapse onto itself.
The zipper lemma follows easily from this.

\begin{lemma}[Zipper lemma]\label{le:zipper}
Let~$G$ be a $2n$-acyclic \ca graph, $v,u\in G$, and
\[
 v,\alpha_1,t_2,\alpha_2,t_3,\dots,t_\ell,\alpha_\ell,u
 \quad\mbox{and}\quad
 v,\beta_1,r_2,\beta_2,r_3,\dots,r_k,\beta_k,u
\]
be two coset paths from~$v$ to~$u$ of length up to~$n$.
Then
\begin{enumerate}
\item
$[v]_{\beta_1\cap \alpha_1}\cap[t_2]_{\alpha_1\cap\alpha_2}\neq\emptyset$
\; or \;
$[v]_{\alpha_1\cap\beta_1} \cap [r_2]_{\beta_1\cap \beta_2} \neq\emptyset$;
\item
$[u]_{\beta_k \cap \alpha_{\ell}} \cap [t_{\ell}]_{\alpha_\ell\cap\alpha_{\ell-1}}
\neq\emptyset$
\; or \;
$[u]_{\alpha_\ell\cap \beta_k}\cap[r_{k}]_{\beta_k\cap \beta_{k-1}}\neq\emptyset$.

\end{enumerate}
\end{lemma}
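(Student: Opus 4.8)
The plan is to reduce the Zipper lemma to the cyclic collapse lemma (Lemma~\ref{le:cyclicZipper}) by gluing the two given coset paths into a single cyclic coset path and then arguing that the failure of \emph{both} disjunct conditions would produce a short cyclic coset path, which cannot exist in a $2n$-acyclic graph.

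\medskip
\noindent\textbf{Setting up the argument.} I would prove the two statements symmetrically, so let me focus on statement~(1) at the vertex~$v$; statement~(2) at~$u$ follows by reversing both paths (and using $2n$-acyclicity, since the reversed concatenation again has length $\leq 2n$). First I would form the concatenation of the first path with the reverse of the second path, which is a cyclic coset path from~$v$ back to~$v$:
\[
 v,\alpha_1,t_2,\alpha_2,\dots,t_\ell,\alpha_\ell,u,\beta_k,r_k,\dots,r_2,\beta_1,v.
\]
This closed walk has length $\ell+k\leq 2n$. The key point is that the coset path conditions hold at every interior vertex automatically (they are inherited from the two original coset paths, and at~$u$ the gluing of $\alpha_\ell$ and $\beta_k$ is exactly condition~(2), which I will have handled separately). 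So the only places where the full \emph{coset cycle} property can fail are at the two occurrences of the seam vertex~$v$, where the consecutive labels are $(\beta_1,\alpha_1)$ on one side and $(\alpha_\ell,\beta_k)$ across the other seam.

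\medskip
\noindent\textbf{The collapse.} By Lemma~\ref{le:cyclicZipper} there is no coset path of length up to~$2n$ from~$v$ back to~$v$, so the glued sequence cannot itself be a coset path: the coset property must fail at one of the seams. Concretely, I would argue by contradiction: suppose \emph{both} disjuncts in statement~(1) are empty, i.e.
\[
 [v]_{\beta_1\cap \alpha_1}\cap[t_2]_{\alpha_1\cap\alpha_2}=\emptyset
 \quad\text{and}\quad
 [v]_{\alpha_1\cap\beta_1} \cap [r_2]_{\beta_1\cap \beta_2} =\emptyset.
\]
These two emptiness conditions are precisely the two coset properties needed at the vertex~$v$ when it sits between labels $\beta_1$ and $\alpha_1$ (reading the cycle as $\dots,r_2,\beta_1,v,\alpha_1,t_2,\dots$). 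Once condition~(2) supplies the analogous emptiness at~$u$, every seam satisfies the coset property, and the glued sequence becomes a genuine coset path of length $\ell+k\leq 2n$ from~$v$ to~$v$. This contradicts Lemma~\ref{le:cyclicZipper}, so at least one of the two disjuncts must be non-empty, establishing statement~(1).

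\medskip
\noindent\textbf{Main obstacle.} The delicate point is the bookkeeping at the seams, and in particular making sure statements~(1) and~(2) are not invoked circularly. The clean way to organise this is to prove both statements at once: assume for contradiction that \emph{both} disjuncts of~(1) fail, and then observe that this alone already forces the coset property at~$v$; combined with whichever disjunct of~(2) one needs, one still has to rule out the remaining seam. I expect the honest version to run the argument as a single contradiction in which the negation of both disjuncts in~(1) is used to verify the coset property at~$v$, and then one splits on whether the seam at~$u$ is good or can be shortened, exactly in the style of the case analysis in the proof of Lemma~\ref{le:cyclicZipper} (where a non-empty overlap at one end was used to splice in a new basepoint~$u$ and shorten the cyclic path). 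Thus the real content is that any surviving overlap lets one re-anchor the cyclic path at the overlap witness and shrink it below length $2n$, and the degenerate short cases ($\ell$ or $k$ small) should be checked directly against 2-acyclicity as in the base cases of Lemma~\ref{le:cyclicZipper}.
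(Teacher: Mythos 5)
Your high-level plan --- glue the two paths into one cyclic coset path and invoke Lemma~\ref{le:cyclicZipper} --- is the same engine the paper uses, but the way you orient the gluing leaves a genuine gap. The glued sequence $v,\alpha_1,t_2,\dots,u,\beta_k,r_k,\dots,r_2,\beta_1,v$ is a candidate coset \emph{path} from~$v$ to~$v$, and for a coset path the conditions at the two endpoint occurrences of~$v$ are the ones with the empty label substituted (as in Definition~\ref{def:cosetPath}), namely $v\notin[t_2]_{\alpha_1\cap\alpha_2}$ and $v\notin[r_2]_{\beta_1\cap\beta_2}$; these hold automatically because the two given paths are coset paths. The conditions $[v]_{\beta_1\cap\alpha_1}\cap[t_2]_{\alpha_1\cap\alpha_2}=\emptyset$ and $[v]_{\alpha_1\cap\beta_1}\cap[r_2]_{\beta_1\cap\beta_2}=\emptyset$ that you want to refute are the coset \emph{cycle} property at~$v$; they are simply not required for the glued sequence to fall under Lemma~\ref{le:cyclicZipper}. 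For this particular gluing the lemma only bites at the interior seam~$u$: the glued sequence is a coset path from~$v$ to~$v$ if and only if both intersections in statement~(2) are empty. So this gluing proves statement~(2), not statement~(1). Moreover, your patch ``once condition~(2) supplies the analogous emptiness at~$u$'' has the polarity reversed: condition~(2) asserts that one of the intersections at~$u$ is \emph{non-empty}, which destroys the coset path property at the seam~$u$ rather than establishing it, so assuming only that both disjuncts of~(1) fail yields no contradiction with Lemma~\ref{le:cyclicZipper} and nothing follows about~(1).

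The repair is small. To get statement~(1), glue the other way round so that $v$ becomes the interior seam: $u,\alpha_\ell,t_\ell,\dots,t_2,\alpha_1,v,\beta_1,r_2,\dots,r_k,\beta_k,u$. The endpoint conditions at~$u$ and all conditions interior to the two halves are inherited, and the coset path property at the seam~$v$ is exactly the conjunction of the negations of the two disjuncts of~(1); if both fail you obtain a coset path of length $\ell+k\leq 2n$ from~$u$ to~$u$, contradicting Lemma~\ref{le:cyclicZipper}. Statement~(2) then follows from the gluing you already wrote down, and the two statements come out independently, so there is no circularity to manage at all. The paper's own proof reaches the same conclusion by a slightly different route (first extracting one overlap from $2n$-acyclicity of the would-be coset cycle, then applying Lemma~\ref{le:cyclicZipper} at the other end); the essential point your write-up misses is the bookkeeping of which vertex is an endpoint of the glued path (where only the $\emptyset$-conditions are needed and hold for free) and which is an interior seam (where the full coset property must be checked).
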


\begin{proof}
 Both paths are short and share the start vertex~$v$
 and the end vertex~$u$. Both paths fulfil the coset
 cycle property at every link between~$v$ and~$u$ by
 definition. However, the assumptions do not tell us
 exactly what the situation looks like at~$v$ and~$u$,
 the places where the paths overlap. The zipper lemma
 claims that there is an overlap that violates the
 coset cycle property at \emph{both} ends.
 
 Since~$G$ is $2n$-acyclic we know that there must
 be an overlap at one of the ends, i.e.\ 
 \begin{itemize}
  \item
    $[v]_{\beta_1\cap \alpha_1}\cap[t_2]_{\alpha_1\cap\alpha_2}\neq\emptyset$, or
  \item
    $[v]_{\alpha_1\cap\beta_1} \cap [r_2]_{\beta_1\cap \beta_2}\neq\emptyset$, or
  \item
    $[u]_{\beta_k \cap \alpha_{\ell}} \cap [t_{\ell}]_{\alpha_\ell\cap\alpha_{\ell-1}}\neq\emptyset$, or
  \item
    $[u]_{\alpha_\ell\cap \beta_k}\cap[r_{k}]_{\beta_k\cap \beta_{k-1}}\neq\emptyset$
 \end{itemize}
 occurs because otherwise the two coset paths would form
 a coset cycle of length up to~$2n$; w.l.o.g.\ assume
 $[v]_{\beta_1\cap \alpha_1}\cap[t_2]_{\alpha_1\cap\alpha_2}\neq\emptyset$.
 If we now assume that there is no overlap at~$u$, i.e.\ 
 \[
  [u]_{\beta_k \cap \alpha_{\ell}} \cap [t_{\ell}]_{\alpha_\ell\cap\alpha_{\ell-1}}=\emptyset
  \quad\mbox{and}\quad
  [u]_{\alpha_\ell\cap \beta_k}\cap[r_{k}]_{\beta_k\cap \beta_{k-1}}=\emptyset,
 \]
 then there would be a cyclic coset path of length up to~$2n$
 from~$v$ to~$v$, contradicting Lemma~\ref{le:cyclicZipper}. 
\end{proof}

The zipper lemma states that two short coset paths that start and end
at the same vertices can be considered two recombinations
of the constituents of a common core path.
Short coset paths in acyclic \ca graphs are unique
in the sense that the zipper lemma applies to them.
Thus, $n$-acyclic \ca graphs can be
considered locally tree-like.
The zipper lemma has several important consequences.

\begin{corollary}\label{cor:edgeCut}
 Let~$G$ be a $2n$-acyclic \ca graph, $v,u\in G$.
 If there are two short coset paths
 \[
  v,\alpha_1,t_2,\alpha_2,t_3,\dots,t_\ell,\alpha_\ell,u
  \quad\mbox{and}\quad
  v,\beta_1,r_2,\beta_2,r_3,\dots,r_k,\beta_k,u
 \]
 from~$v$ to~$u$ with $\ell,k\leq n$, then there is a short coset paths
 from~$v$ to~$u$ that starts with an $(\alpha_1\cap\beta_1)$-edge.
\end{corollary}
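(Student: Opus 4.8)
The plan is to use the zipper lemma (Lemma~\ref{le:zipper}) to show that the two coset paths overlap at the start vertex~$v$, and then to build a new, shorter coset path that begins with an $(\alpha_1\cap\beta_1)$-edge. Applying part~(1) of the zipper lemma, I obtain (w.l.o.g.)
\[
 [v]_{\beta_1\cap\alpha_1}\cap[t_2]_{\alpha_1\cap\alpha_2}\neq\emptyset,
\]
so there is a witness $w\in[v]_{\beta_1\cap\alpha_1}\cap[t_2]_{\alpha_1\cap\alpha_2}$. This $w$ lies in $[v]_{\alpha_1\cap\beta_1}$, so $v^{-1}w\in G_{\alpha_1\cap\beta_1}$ and there is an $(\alpha_1\cap\beta_1)$-path from~$v$ to~$w$. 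Since $\alpha_1\cap\beta_1\neq\emptyset$ (otherwise $[v]_{\alpha_1\cap\beta_1}=\{v\}$ would force $w=v$, contradicting $v\notin[t_2]_{\alpha_1\cap\alpha_2}$, which holds because the first path is a genuine coset path), this path starts with an $(\alpha_1\cap\beta_1)$-edge.

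Next I would assemble the candidate shorter path. The natural construction is to replace the initial segment $v,\alpha_1,t_2$ by the detour through~$w$: namely
\[
 v,\,\alpha_1\cap\beta_1,\,w,\,\alpha_2,\,t_3,\,\alpha_3,\dots,t_\ell,\,\alpha_\ell,\,u,
\]
where I use that $w\in[t_2]_{\alpha_1\cap\alpha_2}\subseteq[t_2]_{\alpha_2}=[t_3]_{\alpha_2}$, so $w^{-1}t_3\in G_{\alpha_2}$ and the $\alpha_2$-step from~$w$ to~$t_3$ is legitimate. This has length $\ell$, so it is still short; the first label is now $\alpha_1\cap\beta_1$, which is exactly what the corollary demands. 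The main obstacle is verifying that this really is a coset \emph{path}, i.e.\ that the coset cycle property holds at every interior link, in particular at the new junction~$w$. For the steps from $t_3$ onward the property is inherited from the original path, but at~$w$ I must check
\[
 [v]_{\emptyset\cap(\alpha_1\cap\beta_1)}\cap[w]_{(\alpha_1\cap\beta_1)\cap\alpha_2}=\emptyset
 \quad\text{and}\quad
 [w]_{(\alpha_1\cap\beta_1)\cap\alpha_2}\cap[t_3]_{\alpha_2\cap\alpha_3}=\emptyset.
\]
The first equals $\{v\}\cap[w]_{(\alpha_1\cap\beta_1)\cap\alpha_2}$, which is empty iff $v\notin[w]_{(\alpha_1\cap\beta_1)\cap\alpha_2}$; this I would derive from $2$-acyclicity via Lemma~\ref{le:cutChar}, using that $w\in[t_2]_{\alpha_1\cap\alpha_2}$ together with $v\notin[t_2]_{\alpha_1\cap\alpha_2}$.

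For the second condition I would invoke Lemma~\ref{le:cosetCutPath} (the path version of the cut lemma) to rewrite
\[
 [w]_{(\alpha_1\cap\beta_1)\cap\alpha_2}\cap[t_3]_{\alpha_2\cap\alpha_3}
 =[v]_{\alpha_1\cap\beta_1}\cap[w]_{\alpha_2}\cap[t_3]_{\alpha_3}
\]
and compare it with the corresponding intersection for the original path at~$t_2$, namely $[v]_{\emptyset}\cap[t_2]_{\alpha_2}\cap[t_3]_{\alpha_3}$; since $[w]_{\alpha_2}=[t_2]_{\alpha_2}$ and $[v]_{\alpha_1\cap\beta_1}\subseteq[v]_{\alpha_1}=[t_2]_{\alpha_1}$, any witness here would, by $2$-acyclicity and the triple-intersection identities of Lemma~\ref{le:2acycProps}, yield a witness to the coset cycle property of the original path at~$t_2$, which is empty by hypothesis. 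Handling the degenerate short cases (e.g.\ $\ell=1$, where $t_2=u$ and the two checks merge into a single condition at~$u$) separately and then combining everything gives the desired short coset path starting with an $(\alpha_1\cap\beta_1)$-edge. I expect the bookkeeping at the junction~$w$ to be the only real difficulty; everything else is a direct transcription of the zipper overlap into a concrete path.
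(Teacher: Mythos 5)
Your proposal is correct and follows essentially the same route as the paper: apply the zipper lemma to obtain a witness $w\in[v]_{\beta_1\cap\alpha_1}\cap[t_2]_{\alpha_1\cap\alpha_2}$, splice it in as $v,(\alpha_1\cap\beta_1),w,\alpha_2,t_3,\dots,\alpha_\ell,u$, and verify the two coset conditions at the junction from $v\notin[t_2]_{\alpha_1\cap\alpha_2}=[w]_{\alpha_1\cap\alpha_2}$ and from the original path's coset property at $t_2$ via $[w]_{(\alpha_1\cap\beta_1)\cap\alpha_2}\subseteq[t_2]_{\alpha_1\cap\alpha_2}$. The paper's verification of the second condition is slightly more direct (a plain inclusion rather than a detour through Lemma~\ref{le:cosetCutPath}), but the argument is the same.
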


\begin{proof}
 W.l.o.g.\ we can assume that there
 is some $v_2\in[v]_{\beta_1\cap\alpha_1}\cap[t_2]_{\alpha_1\cap\alpha_2}$
 by Lemma~\ref{le:zipper}. First, the choice of~$v_2$ and the coset property
 of the original path imply
 \[
  [v_2]_{\alpha_1\cap\alpha_2}\cap[t_3]_{\alpha_2\cap\alpha_3}=
  [t_2]_{\alpha_1\cap\alpha_2}\cap[t_3]_{\alpha_2\cap\alpha_3}=\emptyset.
 \]
 Second,
 \[
  v\notin[t_2]_{\alpha_1\cap\alpha_2}=[v_2]_{\alpha_1\cap\alpha_2}
  \supseteq[t_2]_{\alpha_1\cap\alpha_2\cap\beta_1}
 \]
 implies
 \[
  [v]_{\emptyset\cap(\alpha_1\cap\beta_1)}\cap[v_2]_{(\alpha_1\cap\beta_1)\cap\alpha_2}=\emptyset.
 \]
 Thus, $v,(\alpha_1\cap\beta_1),v_2,\alpha_2,t_3,\dots,t_\ell,\alpha_\ell,u$
 is a short coset path.
\end{proof}

Let~$G$ be a 2-acyclic \ca graph and $v,u\in G$. Based on
Corollary~\ref{cor:edgeCut} we define the unique minimal set
of generators~$\sh(v,u)\subseteq E$.

\begin{definition}\label{def:short}
A set of generators~$\alpha$ is a \emph{first generator set for $(v,u)$}
if there is a short coset path from~$v$ to~$u$ that starts with an
$\alpha$-edge. The \emph{minimal first generator set for $(v,u)$} $\sh(v,u)$
is the intersection of all first generator sets:
\[
 \sh(v,u) := \bigcap \{\alpha\subseteq E :
 \mbox{ $\alpha$ is a first generator set for $(v,u)$}\}
\]
\end{definition}

The unique set $\sh(v,u)$ is well-defined because the intersection
of two first generator sets is again a first generator set by
Corollary~\ref{cor:edgeCut}. In general, $\sh(v,u)\neq\sh(u,v)$
but
\[
 \sh(v,u),\sh(u,v)\subseteq\gen(v,u)=\gen(u,v)
\]
because $\gen(v,u)$ is a first generator set for~$(v,u)$ and~$(u,v)$.
The set $\sh(v,u)$ gives us another perspective
on the uniqueness of short coset paths. If one wants to move from one vertex
to another on a short coset path, then there might be many possibilities
but just one single ``direction'' to start with.

Furthermore, the zipper lemma implies that all short
coset paths of length $\geq 2$ can be assumed
to be inner paths.

\begin{corollary}\label{cor:shortIsNontrivial}
 Let~$G$ be a $2n$-acyclic \ca graph, $2\leq\ell\leq n$,
 \[
  v_1,\alpha_1,v_2,\alpha_2,v_3,\dots,v_\ell,\alpha_\ell,v_\ellp
 \]
 be a coset path and $\alpha\supseteq\gen(v_1,v_\ellp)$.
 Then $\alpha_i\nsupseteq\alpha$,
 for $1\leq i\leq\ell$, and there are
 $v_i'\in[v_i]_{\alpha_{i-1}\cap\alpha_i}$, for $1< i\leq\ell$,
 such that
 \[
  v_1,(\alpha_1\cap\alpha),v_2',(\alpha_2\cap\alpha),v_3',\dots,v_\ell',(\alpha_\ell\cap\alpha),v_\ellp
 \]
 is an inner coset path.
\end{corollary}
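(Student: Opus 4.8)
The plan is to split the statement into two independent tasks and, first of all, to pin down what inner-ness means here. Since $\alpha\supseteq\gen(v_1,v_\ellp)$, both endpoints lie in the coset $C:=[v_1]_\alpha$, and the natural reading of the conclusion is that each $[v_i']_{\alpha_i\cap\alpha}$ is a \emph{proper} subcoset of $C$ (for $\alpha=\gen(v_1,v_\ellp)$ this is literally Definition~\ref{def:cosetPath}). By Lemma~\ref{le:agentsAndClasses}, once $v_i'\in C$ we have $[v_i']_{\alpha_i\cap\alpha}\subsetneq[v_1]_\alpha$ precisely when $\alpha_i\cap\alpha\subsetneq\alpha$, i.e.\ exactly when $\alpha_i\nsupseteq\alpha$. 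Thus the corollary reduces to: (1) the first claim $\alpha_i\nsupseteq\alpha$; and (2) choosing $v_i'\in[v_i]_{\alpha_{i-1}\cap\alpha_i}$ that all lie in $C$ and make the relabelled sequence a coset path. Inner-ness then follows automatically.

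The observation that makes (2) cheap is that the coset cycle property is inherited \emph{for free}. For any choice $v_i'\in[v_i]_{\alpha_{i-1}\cap\alpha_i}$, monotonicity of cosets (Lemma~\ref{le:agentsAndClasses}) gives
\[
 [v_i']_{(\alpha_{i-1}\cap\alpha)\cap(\alpha_i\cap\alpha)}\subseteq[v_i']_{\alpha_{i-1}\cap\alpha_i}=[v_i]_{\alpha_{i-1}\cap\alpha_i},
\]
and likewise $[v_\ip']_{(\alpha_i\cap\alpha)\cap(\alpha_\ip\cap\alpha)}\subseteq[v_\ip]_{\alpha_i\cap\alpha_\ip}$; as the two larger cosets are disjoint (the original is a coset path), so are the smaller ones. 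Hence only connectivity and $v_i'\in C$ remain. I would produce the $v_i'$ by sweeping from $v_1$ and applying Corollary~\ref{cor:edgeCut} repeatedly: at the $i$-th stage I compare the current tail (a short coset path from the relocated $v_i'\in C$ to $v_\ellp$) with the trivial coset path $v_i',\alpha,v_\ellp$ — legitimate since $v_i',v_\ellp\in C$ force $\gen(v_i',v_\ellp)\subseteq\alpha$ — and read off a short coset path whose first edge is an $(\alpha_i\cap\alpha)$-edge, relabelling the step and replacing the next vertex by some $v_\ip'\in[v_i']_{\alpha\cap\alpha_i}\cap[v_\ip]_{\alpha_i\cap\alpha_\ip}$. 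In particular $v_\ip'\in[v_i']_\alpha=C$, so relocated vertices never leave $C$; by the free-seam observation no cycle-property violation is ever created, and after the sweep $v_1,(\alpha_1\cap\alpha),v_2',\dots,(\alpha_\ell\cap\alpha),v_\ellp$ is the desired path (the final stage degenerates to producing the last edge directly).

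For (1) it suffices to show $\alpha_i\nsupseteq\beta$ with $\beta:=\gen(v_1,v_\ellp)$, since $\alpha\supseteq\beta$ turns $\alpha_i\supseteq\alpha$ into $\alpha_i\supseteq\beta$. The core is the first-edge case $\alpha_1\nsupseteq\beta$, equivalently $v_\ellp\notin[v_1]_{\alpha_1}$: assuming the contrary, $v_\ellp\in[v_1]_{\alpha_1}=[v_2]_{\alpha_1}$, so $v_2$ and $v_\ellp$ are $\alpha_1$-connected, and appending the trivial return step labelled $\alpha_1$ to the tail $v_2,\alpha_2,\dots,v_\ellp$ yields a short closed coset path, contradicting Lemma~\ref{le:cyclicZipper}; the one new seam it introduces is handled exactly as in that proof, by letting the closed path collapse onto itself. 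An arbitrary index $i$ is meant to be reduced to this by the same collapsing mechanism, using that $\alpha_i\supseteq\beta$ places $v_1$ and $v_\ellp$ in a common $\alpha_i$-coset.

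The step I expect to be the main obstacle is this last one — excluding an \emph{interior} oversized label $\alpha_i\supseteq\beta$. At the ends the clean two-line contradiction works because one endpoint is itself a path vertex, so a single use of Lemma~\ref{le:cutChar} against the coset property finishes; for interior $i$ this fails, since $[v_i]_{\alpha_i}$ need not contain $v_1$ or $v_\ellp$ and the intermediate vertices may leave $C$. I therefore anticipate needing the full self-collapsing induction of Lemma~\ref{le:cyclicZipper}, or a careful tracking of the sets $\gen(v_1,v_j)$ along the path through Lemma~\ref{le:2acycProps} and Lemma~\ref{le:cosetCutPath}, to manufacture the short cyclic coset path that forces the contradiction.
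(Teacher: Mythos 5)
Your construction of the relocated vertices is essentially the paper's argument: it too sweeps from $v_1$, compares the current tail $v_i',\alpha_i,v_{i+1},\dots,v_\ellp$ with the trivial short coset path $v_i',\alpha,v_\ellp$ (legitimate because $v_i'\in C:=[v_1]_\alpha$ forces $\gen(v_i',v_\ellp)\subseteq\alpha$), and reads off $v_\ip'\in[v_i']_{\alpha\cap\alpha_i}\cap[v_\ip]_{\alpha_i\cap\alpha_\ip}$ via the zipper lemma as in Corollary~\ref{cor:edgeCut}. Your ``free seam'' observation is a cleaner justification of a step the paper leaves implicit, and your reduction of inner-ness to ``$\alpha_i\nsupseteq\alpha$ plus $v_i'\in C$'' via Lemma~\ref{le:agentsAndClasses} is sound.

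The gap is exactly where you flag it, but the obstacle is self-inflicted: you try to exclude an interior oversized label $\alpha_i\supseteq\alpha$ by arguing on the \emph{original} path, where $v_i$ need not lie in $C$ and $\gen(v_1,v_i)$ is uncontrolled; your proposed reduction (``$\alpha_i\supseteq\beta$ places $v_1$ and $v_\ellp$ in a common $\alpha_i$-coset'') does not by itself manufacture a short cyclic coset path, because that common $\alpha_i$-coset need not be the one the path actually traverses at step~$i$. The paper sidesteps this entirely: $\alpha_i\nsupseteq\alpha$ is not a prerequisite of the sweep but an \emph{output} of it. At stage~$i$ you already hold the relocated tail $v_i',\alpha_i,v_{i+1},\dots,v_\ellp$, a coset path both of whose endpoints lie in~$C$, so $\gen(v_i',v_\ellp)\subseteq\alpha$ and your clean first-edge argument applies verbatim with $v_i'$ in place of $v_1$: if $\alpha_i\supseteq\alpha$ then $v_\ellp\in[v_i']_{\alpha_i}=[v_\ip]_{\alpha_i}$, which for a tail of length~$2$ puts $v_\ellp$ into $[v_\ip]_{\alpha_i\cap\alpha_\ip}$ by Lemma~\ref{le:cutChar} and violates the coset property at the last link, and for longer tails yields a short cyclic coset path from $v_\ellp$ to itself, contradicting Lemma~\ref{le:cyclicZipper}. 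So claims (1) and (2) should be interleaved in a single induction along the path rather than proved separately; with that reordering your argument closes, and no ``careful tracking of $\gen(v_1,v_j)$'' is needed.
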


\begin{proof}
 First, $\alpha_1 \supseteq \alpha$ cannot be the case:
 if $\ell=2$, then $v_1,\alpha_1,v_2,\alpha_2,v_3$ would not be
 a coset path since $v_3\in[v_2]_{\alpha_1\cap\alpha_2}$,
 and $\ell>2$ would imply a short cyclic coset path from~$v_\ellp$
 to itself, contradicting Lemma~\ref{le:cyclicZipper}.
 Hence, in both cases $\alpha_1 \nsupseteq \alpha$, and with that
 $\alpha_1\cap\alpha \subsetneq \alpha$ which implies
 $[v_1]_{\alpha_1\cap\alpha} \subsetneq [v_1]_\alpha.$
 
 Second, analogously to the proof of Corollary~\ref{cor:edgeCut}
 one can show that there is some $v_2'\in[v_2]_{\alpha_{1}\cap\alpha_2}$
 such that
 \[
  v_1,(\alpha_1\cap\alpha),v_2',\alpha_2,v_3,\dots,v_\ell,\alpha_\ell,v_\ellp
 \]
 is a coset path because $v_1,\alpha,v_\ellp$ is also a short
 coset path from~$v_1$ to~$v_\ellp$. Applying the same argument iteratively
 to the paths $v_i',\alpha_i,v_{i+1},\dots,v_\ell,\alpha_\ell,v_\ellp$
 and $v_i',\alpha,v_\ellp$, for $2\leq i\leq\ell$,
 shows $\alpha_i\nsupseteq\alpha$ and yields the desired vertices.
\end{proof}

Corollary~\ref{cor:shortIsNontrivial} illustrates the special
role of the subgraph induced by~$[v]_{\gen(v,u)}$:
all short coset paths between~$v$ and~$u$ essentially
move within~$[v]_{\gen(v,u)}$.
Conversely, if a coset path has a link that is disjoint
from~$[v]_{\gen(v,u)}$, then it must be long.

\begin{corollary}\label{cor:longOnTheOutside}
 Let~$G$ be a $2n$-acyclic \ca graph.
 If $v_1,\alpha_1,\dots,\alpha_\ell,v_\ellp$
 is a coset path with
 \[ [v_1]_{\gen(v_1,v_\ellp)}\cap[v_i]_{\alpha_{i-1}\cap\alpha_i}=\emptyset, \]
 for some $2\leq i\leq\ell$, then $\ell>n$.
\end{corollary}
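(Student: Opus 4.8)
The plan is to prove the contrapositive: assume $\ell \leq n$ and derive a contradiction with the disjointness hypothesis. This is a direct consequence of Corollary~\ref{cor:shortIsNontrivial}, which states that every short coset path between $v_1$ and $v_\ellp$ can be modified into an inner coset path that stays entirely within the minimal connecting coset $[v_1]_{\gen(v_1,v_\ellp)}$. The key observation is that the hypothesis here asserts the \emph{opposite}: for some link index~$i$, the coset $[v_i]_{\alpha_{i-1}\cap\alpha_i}$ is disjoint from $[v_1]_{\gen(v_1,v_\ellp)}$, meaning that link sits entirely outside the minimal connecting coset. These two statements cannot both hold for a short path, so $\ell \leq n$ is impossible.

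First I would set $\alpha := \gen(v_1,v_\ellp)$ and suppose toward a contradiction that $2 \leq \ell \leq n$. Then the path $v_1,\alpha_1,\dots,\alpha_\ell,v_\ellp$ is a short coset path of length $\geq 2$, so Corollary~\ref{cor:shortIsNontrivial} applies. That corollary yields vertices $v_j' \in [v_j]_{\alpha_{j-1}\cap\alpha_j}$ for $1 < j \leq \ell$ such that the modified path with links labelled $\alpha_j \cap \alpha$ is an \emph{inner} coset path; in particular, every link satisfies $[v_j']_{\alpha_{j-1}\cap\alpha_j \cap \alpha} \subsetneq [v_1]_\alpha$, which already forces $[v_j']_{\alpha_{j-1}\cap\alpha_j\cap\alpha}$ to be a subset of $[v_1]_\alpha$ and hence to intersect it. The next step is to relate this back to the original links: since $v_i' \in [v_i]_{\alpha_{i-1}\cap\alpha_i}$, the two cosets $[v_i']_{\alpha_{i-1}\cap\alpha_i}$ and $[v_i]_{\alpha_{i-1}\cap\alpha_i}$ coincide, so the inner-path property gives $[v_i]_{\alpha_{i-1}\cap\alpha_i} \cap [v_1]_\alpha \neq \emptyset$ (in fact one direction of containment holds after intersecting with $\alpha$). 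This directly contradicts the hypothesis $[v_1]_{\gen(v_1,v_\ellp)}\cap[v_i]_{\alpha_{i-1}\cap\alpha_i}=\emptyset$, so $\ell > n$ as claimed.

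I would also handle the degenerate case $\ell = 1$ separately, though it is essentially vacuous: a length-1 coset path $v_1,\alpha_1,v_2$ has no intermediate link index $i$ with $2 \leq i \leq \ell$, so the hypothesis is never triggered, and the statement $\ell > n$ would only be at issue when $n \geq 1$; one can note that for $n \geq 1$ a single link with $[v_1]_{\gen}\cap[v_i]_{\dots}=\emptyset$ simply cannot occur under the index constraints, so there is nothing to prove.

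The main obstacle I anticipate is the bookkeeping that connects the \emph{modified} inner path produced by Corollary~\ref{cor:shortIsNontrivial} back to the \emph{original} links whose disjointness is hypothesised. The corollary changes both the labels (from $\alpha_j$ to $\alpha_j \cap \alpha$) and the vertices (from $v_j$ to $v_j'$), so I must verify carefully that intersecting the original link coset $[v_i]_{\alpha_{i-1}\cap\alpha_i}$ with $[v_1]_\alpha$ is nonempty precisely because the corresponding inner link $[v_i']_{(\alpha_{i-1}\cap\alpha)\cap(\alpha_i\cap\alpha)}$ is a subset of $[v_1]_\alpha$ and is itself a subset of $[v_i]_{\alpha_{i-1}\cap\alpha_i}$. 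This uses Lemma~\ref{le:agentsAndClasses} (monotonicity of cosets under inclusion of generator sets, valid in 2-acyclic graphs) together with $v_i' \in [v_i]_{\alpha_{i-1}\cap\alpha_i}$ to identify the two relevant cosets. Once that identification is in place, the contradiction is immediate.
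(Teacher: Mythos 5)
Your proof is correct and follows exactly the route the paper intends: the corollary is stated as the contrapositive of Corollary~\ref{cor:shortIsNontrivial}, and you derive it by applying that corollary with $\alpha=\gen(v_1,v_{\ell+1})$ to a hypothetical short path and observing that the resulting inner path's vertex $v_i'\in[v_i]_{\alpha_{i-1}\cap\alpha_i}$ also lies in $[v_1]_\alpha$, contradicting the disjointness hypothesis. The only simplification available is that $v_i'$ itself already witnesses the nonempty intersection, so the coset-identification bookkeeping via Lemma~\ref{le:agentsAndClasses} is not strictly needed.
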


\subsection{Distance in \ca graphs}\label{sec:distance}

In a \ca graph, every pair of vertices~$v$, $u$ is connected
by the coset path $v,E,u$ of length~1.
This makes the definition of a sensible measure of distance
w.r.t.\ coset paths non-obvious.
However, we find a solution with the help of 2-acyclicity
and its implications.
Using 2-acyclicity and the set~$\gen(v,u)$, for vertices~$v,u$,
we defined non-trivial coset paths, the paths that remain if one forbids
all cosets that connect~$v$ and~$u$ in one step.
This leads us to a non-trivial notion
of distance in 2-acyclic \ca graphs.

\begin{definition}[Distance in \ca graphs]\label{def:distance}
 Let~$G$ be a \ca graph.
 The \emph{distance} $d(v,u)$ between two vertices~$v\neq u$
 is defined as the length of a minimal
 non-trivial coset path from~$v$ to~$u$.
\end{definition}

\begin{remark}
 Definition~\ref{def:distance} does not allow for $d(v,u)=1$.
 This might seem peculiar compared to other distance measures.
 However, the measure $d(v,u)$ is precisely designed to
 capture the length of the \emph{non-trivial} coset path connections
 between two vertices, and their length is always at least~2.
\end{remark}

In the previous section, we showed that in sufficiently
acyclic structures all short coset paths can be considered
inner paths. This has implications for the distance.
If we want to know if the distance between~$v$ and~$u$
is long, it suffices to look at the inner paths within
the substructure induced by~$[v]_{\gen(v,u)}$.

\begin{lemma}\label{le:noShortInner}
 Let $m\in\N$,~$G$ be a sufficiently acyclic
 \ca graph and $v,u$ two vertices.
 If there are no inner coset paths
 from~$v$ to~$u$ of length $\leq m$,
 then $d(v,u)>m.$
\end{lemma}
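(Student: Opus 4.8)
The plan is to prove the contrapositive: assuming $d(v,u)\leq m$, I will produce an inner coset path from~$v$ to~$u$ of length $\leq m$, which contradicts the hypothesis. By Definition~\ref{def:distance}, $d(v,u)\leq m$ means there is a non-trivial coset path $v,\alpha_1,\dots,\alpha_\ell,u$ from~$v$ to~$u$ whose length is $\ell=d(v,u)\leq m$. Since every non-trivial coset path has length at least~$2$, this gives $2\leq\ell\leq m$. (If no non-trivial coset path from~$v$ to~$u$ exists at all, then $d(v,u)=\infty>m$ and there is nothing to prove.)

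Next I would make precise what ``sufficiently acyclic'' must mean here: fix $n\geq m$ so that~$G$ is $2n$-acyclic. Then $2\leq\ell\leq n$, so Corollary~\ref{cor:shortIsNontrivial} applies to the coset path above. Taking $\alpha:=\gen(v,u)$, which trivially satisfies the corollary's hypothesis $\alpha\supseteq\gen(v_1,v_\ellp)$ with equality, the corollary supplies vertices $v_i'\in[v_i]_{\alpha_\im\cap\alpha_i}$ and an inner coset path
\[
 v,(\alpha_1\cap\alpha),v_2',(\alpha_2\cap\alpha),v_3',\dots,v_\ell',(\alpha_\ell\cap\alpha),u
\]
from~$v$ to~$u$ of the same length~$\ell\leq m$. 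This contradicts the assumption that there are no inner coset paths from~$v$ to~$u$ of length $\leq m$, so $d(v,u)\leq m$ is impossible and $d(v,u)>m$ follows.

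I do not expect a genuine obstacle, since Corollary~\ref{cor:shortIsNontrivial} already does the substantial work of converting an arbitrary short coset path into an inner one. The only points needing care are bookkeeping: pinning down the acyclicity degree hidden in ``sufficiently acyclic'' so that the length bound $\ell\leq n$ required by the corollary is met, verifying that the corollary may be invoked with $\alpha=\gen(v,u)$ rather than a proper superset, and recording the degenerate case in which no non-trivial coset path exists. The conceptual heart of the statement — that short coset connections can always be pushed inside $[v]_{\gen(v,u)}$ — is supplied by the earlier corollary, so this lemma is essentially its repackaging in terms of the distance measure.
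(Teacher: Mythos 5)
Your proof is correct and follows essentially the same route as the paper: both arguments take a minimal non-trivial coset path realising $d(v,u)\leq m$, note it has length at least~$2$, and invoke Corollary~\ref{cor:shortIsNontrivial} with $\alpha=\gen(v,u)$ to replace it by an inner coset path of the same length, contradicting the hypothesis. Your version merely spells out the bookkeeping (the degenerate case $d(v,u)=\infty$ and the choice of acyclicity degree $n\geq m$) that the paper leaves implicit.
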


\begin{proof}
 Let $\ell\leq m$, and assume there is a non-trivial coset path
 $v_1,\alpha_1,\dots,\alpha_\ell,v_\ellp$ of length~$\ell$
 from $v=v_1$ to $u=v_\ellp$.
 First, any non-trivial coset path has at least length~2.
 Second, we can assume that the path is an inner coset path
 by Lemma~\ref{cor:shortIsNontrivial} since~$G$
 is sufficiently acyclic.
 This contradicts our assumption. Thus, $d(v,u)>m$.
\end{proof}

The original motivation for this distance stems from~\cite{CO17}.
The central problem there is to play \EF games on \ca graphs with
their complex overlapping edge patterns w.r.t.\ cosets.
To win an \EF game, one must be able to control distances
between multiple vertices of a structure. In the case of \ca graphs one needs
to find a suitable measure of distance first. The one
from Definition~\ref{def:distance} suffices.

Furthermore, this distance for \ca graphs closely corresponds to
a very natural distance in their dual hypergraphs. In dual hypergraphs,
the two hyperedges~$\brck{v}$ and~$\brck{u}$,
for vertices~$v$ and~$u$, always intersect. This intersection is
exactly the set of $\alpha$-cosets, for $\alpha\subseteq E$, that contain
both~$v$ and~$u$. At first glance,
the distance between two hyperedges seems always trivially~0.
But we obtain a meaningful measure of distance in dual hypergraphs
between~$\brck{v}$ and~$\brck{u}$ if we cut out the intersection
$\brck{v}\cap\brck{u}$ and consider the remaining
paths in the Gaifman graph. Essentially, we look for the non-trivial
paths of minimal length between~$\brck{v}$ and~$\brck{u}$.
\begin{definition}\label{def:distanceHypergraph}
 Let~$G$ be a \ca graph, $v,u\in G$ and $t=\brck{v}\cap\brck{u}$.
 The distance $d(\brck{v},\brck{u})$ between the hyperedges~$\brck{v}$
 and~$\brck{u}$ in the dual hypergraph~$d(G)$ is the usual
 graph-theoretic distance in the Gaifman graph of
 $d(G)\upharpoonright (d(V[G])\setminus t)$ between~$\brck{v}\setminus t$
 and~$\brck{u}\setminus t$.
\end{definition}

It is the main result of this section that this measure of
distance for dual hypergraphs corresponds exactly to the distance
defined in~\ref{def:distance} for \ca graphs if certain acyclicity
conditions are met.
If~$G$ is a 2-acyclic \ca graph
and $v\neq u$ are vertices, then
$d(v,u)=d(\brck{v},\brck{u})+1.$
However, we will prove a more general statement that
has a wider range of graph and model-theoretic applications.
In order to obtain a meaningful notion of distance,
we followed the same idea both in \ca graphs and their
dual hypergraphs: cut out the trivial connections, or more, and look
at what remains.

Let~$G$ be a \ca graph and $v\neq u$ vertices. The intersection of
the dual hyperedges $t=\brck{v}\cap\brck{u}$ is always
a non-empty set of cosets. If~$G$ is 2-acyclic, then~$t$ is generated by
the unique set $\gen(v,u)$ (cf.\ Lemma~\ref{le:2acycProps}), i.e.
$$t=\{ [v]_\beta : \beta\supseteq\gen(v,u) \}=\{ [u]_\beta : \beta\supseteq\gen(v,u) \}.$$
We can further generalise the distance measure
$d(\brck{v},\brck{u})$ if we do not forbid
$\brck{v}\cap\brck{u}$, but a more general
set of cosets that has the same structure
as $\brck{v}\cap\brck{u}$. If such a set is a superset of
$\brck{v}\cap\brck{u}$, we arrive at a more general measure
of distance that still has a correspondent in \ca graphs.
Before we formally define these distances, we introduce
some notation to describe the forbidden sets.

\begin{definition}
 For a 2-acyclic \ca graph~$G=(V,(R_e)_{e\in E})$
 with the dual hypergraph~$d(G)=(d(V),S,(Q_\alpha)_{\alpha\subseteq E})$,
 we define the following mapping: 
 \begin{align*}
  \rho^G\colon &V\times\mathcal{P}(E)\to \mathcal{P}(d(V)), \\
               &(v,\gamma) \mapsto \{ [v]_\beta : \beta \supseteq \gamma \}
 \end{align*}
 If it is clear from the context,
 we drop the superscript~$G$ and just write~$\rho$ instead.
\end{definition}

The following lemma characterises the relationship
of the sets $\brck{v}\cap\brck{u}$ and
$\rho(v,\gamma)$ in~$d(G)$ in terms of
$\gen(v,u)$ and $\gamma$.
We can observe the usual duality in the transition from
\ca graphs to their dual hypergraphs.

\begin{lemma}\label{le:tProp}
 Let~$G$ be a 2-acyclic \ca graph, $v,u$ two vertices
 and~$\gamma\subseteq E$ a set of generators, then
 $ \brck{v}\cap\brck{u} \subseteq \rho(v,\gamma)$
 if and only if  $\gamma \subseteq \gen(v,u).$
\end{lemma}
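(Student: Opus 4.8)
The plan is to reduce the statement to the observation, made just before the definition of $\rho$, that in a 2-acyclic graph $\brck{v}\cap\brck{u}=\rho(v,\gen(v,u))$, and then to prove the purely lattice-theoretic equivalence $\rho(v,\gen(v,u))\subseteq\rho(v,\gamma)\Leftrightarrow\gamma\subseteq\gen(v,u)$. In fact I would isolate the slightly more general fact that, for any $\alpha,\gamma\subseteq E$, one has $\rho(v,\alpha)\subseteq\rho(v,\gamma)$ iff $\gamma\subseteq\alpha$, and then specialise to $\alpha=\gen(v,u)$.

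For the direction $\gamma\subseteq\gen(v,u)\Rightarrow\brck{v}\cap\brck{u}\subseteq\rho(v,\gamma)$, I would argue directly from the definition of $\rho$; this direction needs no acyclicity at all. Writing $\alpha_0=\gen(v,u)$, any element of $\brck{v}\cap\brck{u}=\rho(v,\alpha_0)$ has the form $[v]_\beta$ with $\beta\supseteq\alpha_0$; since $\alpha_0\supseteq\gamma$ we get $\beta\supseteq\gamma$, so $[v]_\beta\in\rho(v,\gamma)$ by definition. Thus $\rho(v,\cdot)$ is antitone in its generator argument, which gives the inclusion.

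For the converse $\brck{v}\cap\brck{u}\subseteq\rho(v,\gamma)\Rightarrow\gamma\subseteq\gen(v,u)$, the idea is to test the inclusion on the single generating element $[v]_{\alpha_0}$, where $\alpha_0=\gen(v,u)$. Since $[v]_{\alpha_0}\in\rho(v,\alpha_0)=\brck{v}\cap\brck{u}$, the hypothesis places it in $\rho(v,\gamma)$, so there is some $\delta\supseteq\gamma$ with $[v]_{\alpha_0}=[v]_\delta$. This is exactly where 2-acyclicity enters: by Lemma~\ref{le:agentsAndClasses} the map $\alpha\mapsto[v]_\alpha$ is an order-embedding of $(\mathcal{P}(E),\subseteq)$ into the coset lattice, hence injective, so $[v]_{\alpha_0}=[v]_\delta$ forces $\delta=\alpha_0$. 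Then $\gamma\subseteq\delta=\alpha_0=\gen(v,u)$, as required.

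The only nontrivial ingredient is this injectivity of $\alpha\mapsto[v]_\alpha$, so I do not expect a real obstacle: the content sits entirely in Lemma~\ref{le:agentsAndClasses} (itself a consequence of 2-acyclicity), and the rest is bookkeeping with the definition of $\rho$. The one point to state carefully is the identity $\brck{v}\cap\brck{u}=\rho(v,\gen(v,u))$, which rests on Lemma~\ref{le:2acycProps} and is the place where the minimal connecting set $\gen(v,u)$ is genuinely used.
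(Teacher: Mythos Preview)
Your proof is correct and matches the paper's argument closely: both reduce to the identity $\brck{v}\cap\brck{u}=\rho(v,\gen(v,u))$ from Lemma~\ref{le:2acycProps}, then handle one direction by antitonicity of $\rho(v,\cdot)$ and the other by testing membership of the single element $[v]_{\gen(v,u)}$. If anything you are more careful than the paper, which writes $[v]_\beta\in\rho(v,\gamma)\Leftrightarrow\beta\supseteq\gamma$ without comment, whereas you correctly flag that this equivalence needs the injectivity of $\alpha\mapsto[v]_\alpha$ supplied by Lemma~\ref{le:agentsAndClasses}.
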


\begin{proof}
 Put $\alpha:=\gen(v,u)$. From right to left: assume $\gamma \subseteq \alpha$.
 Together with 2-acyclicity this implies
 \[
  \brck{v}\cap\brck{u} = \{ [v]_\beta : \beta\supseteq\alpha \} \subseteq
  \{ [v]_\beta : \beta \supseteq \gamma \} = \rho(v,\gamma).
 \]
 From left to right: assume $\brck{v}\cap\brck{u} \subseteq \rho(v,\gamma)$.
 As before, $\brck{v}\cap\brck{u} = \{ [v]_\beta : \beta\supseteq\alpha \}$
 because of 2-acyclicity.
 Hence, for all $\beta\subseteq E$
 \begin{align*}
  \beta\supseteq \alpha \quad &\Leftrightarrow \quad [v]_\beta\in \brck{w}\cap\brck{v} \\
  &\Rightarrow \quad [v]_\beta \in \rho(v,\gamma) \\
  &\Leftrightarrow \quad \beta \supseteq \gamma,
 \end{align*}
 which implies, in particular, $\gamma\subseteq\alpha$.
\end{proof}

We will use the mapping~$\rho$ to define generalisations
of $d(\brck{v},\brck{u})$ and $d(v,u)$. For 2-acyclic \ca
graphs, the sets $\brck{v}\cap\brck{u}$ and $\rho(v,\gamma)$
are generated, in some sense, by the single sets $\gen(v,u)$
and~$\gamma$, respectively. If~$\gamma$ is a subset of $\gen(v,u)$,
then $\rho(v,\gamma)$ is a superset of $\brck{v}\cap\brck{u}$ by
Lemma~\ref{le:tProp}. Hence, cutting out $\rho(v,\gamma)$
leaves a bigger hole in the dual hypergraph  and fewer
paths from $\brck{v}\setminus\rho(v,\gamma)$ to
$\brck{u}\setminus\rho(v,\gamma)$, and we can define a more
general measure of distance that is parametrized by $\rho(v,\gamma)$.

\begin{definition}\label{def:tDistanceDual}
Let $\mathcal{A}=(A,S)$ be a hypergraph and $t,X,Y\subseteq A$.
We denote with $d_t(X,Y)$ 
the distance between $X\setminus t$ and $Y\setminus t$
in the induced sub-hypergraph
$\mathcal{A}\setminus t := \mathcal{A}\upharpoonright (A\setminus t)$,
i.e.\ the graph-theoretic distance in its Gaifman graph.
\end{definition}

Essentially, we measure the length of the minimal paths that
go from one set to another and do \emph{not} go through a third
subset~$t$; we call such a path a \emph{non-$t$ path}.
The next step is to define the suitable analogon of non-$t$ paths
in \ca graphs. 
In Definition~\ref{def:tDistanceDual} we extended the set that is to
be avoided to the possibly larger set~$t$. Hence, the analogon on the side
of \ca graphs needs to avoid \emph{more} cosets as links which means
that we need to forbid a \emph{smaller} coset and all its supersets.

\begin{definition}
 Let~$G$ be a \ca graph, $v_1,v_\ellp$ two vertices, $\gamma$
 a set of generators and $t=\rho(v_1,\gamma)$. A coset path
 $ v_1,\alpha,v_2,\alpha_2,\dots,\alpha_\ell,v_\ellp $
 is a \emph{non-$t$ path} if, for all $1\leq i \leq \ell$,
 $$ [v_1]_\gamma \nsubseteq [v_i]_{\alpha_i}. $$
\end{definition}

Non-$t$ coset paths are a generalisation of
non-trivial coset paths (cf.\ Definition~\ref{def:cosetPath}) because
every non-trivial coset path from~$v$ to~$u$ is
a non-$t$ coset path, for $t=\rho(v,\gen(v,u))$.
Based on this generalisation, we can generalise
the former notion of distance to a
notion that depends on~$t$ in a straightforward manner.

\begin{definition}\label{def:tDistance}
Let~$G$ be a $2$-acyclic \ca graph, $v\neq u$ two vertices,
$\gamma\subseteq\Gamma$ and $t=\rho(v,\gamma)$.
The \emph{$t$-distance} $d_t(v,u)$ between~$v$ and~$u$ is defined as
the length of a minimal non-$t$ coset path from~$v$ to~$u$.
\end{definition}

\begin{remark}
 $t$-distance generalises the notion
 of distance from Definition~\ref{def:distance} in the sense that
 $d_t(v,u)=d(v,u),$ for $t=\rho(v,\gen(v,u))=\rho(u,\gen(v,u))$.
\end{remark}

\begin{remark}
 Depending on~$t$, $t$-distance allows for distance~1:
 $d_t(v,u)=1$ if and only if $[v]_{\gen(v,u)}\notin t$.
 However, the interesting cases are the ones where
 $\gamma\subseteq\gen(v,u)$, which implies
 $[v]_{\gen(v,u)}\in t$, for $t=\rho(v,\gamma)$.
\end{remark}

These two parametrized notions of distance,
$d_t(v,u)$ for \ca graphs and $d_t(\brck{v},\brck{u})$
for dual hypergraphs, are closely connected in the following
sense.

\begin{prop}\label{prop:twoDistances}
 For $\ell\geq 1$, let~$G$ be a sufficiently acyclic \ca graph,
 $v\neq u$ two vertices, $\gamma\subseteq E$ and $t=\rho(v,\gamma)$.
 Then 
 \[ d_t(v,u) = \ell
    \quad \Leftrightarrow \quad
    d_t(\brck{v},\brck{u}) = \ell -1.
 \]
\end{prop}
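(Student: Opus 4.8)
The plan is to establish a bijection (or at least a length-preserving correspondence) between non-$t$ coset paths from~$v$ to~$u$ in~$G$ and non-$t$ paths from $\brck{v}\setminus t$ to $\brck{u}\setminus t$ in the Gaifman graph of $d(G)\restr(d(V)\setminus t)$, where the coset-path length is always one more than the hypergraph-path length. This mirrors the dualisation already spelled out after Definition~\ref{def:dualhypergraphdef}: a coset path $v_1,\alpha_1,v_2,\dots,\alpha_\ell,v_\ellp$ in~$G$ induces the sequence of cosets $[v_1]_{\alpha_1},\dots,[v_\ell]_{\alpha_\ell}$, which are vertices of $d(G)$; consecutive cosets $[v_i]_{\alpha_i}$ and $[v_\ip]_{\alpha_\ip}$ both lie in $\brck{v_\ip}$, so they are adjacent in the Gaifman graph. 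Thus a coset path with~$\ell$ links yields a sequence of~$\ell$ Gaifman vertices, i.e.\ a walk of length~$\ell-1$ between the extreme cosets, and the endpoints $[v_1]_{\alpha_1}\in\brck{v}$, $[v_\ell]_{\alpha_\ell}\in\brck{u}$ land in the correct hyperedges.

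First I would prove the direction ``$\Rightarrow$''. Given a minimal non-$t$ coset path of length~$\ell$, I would check that each induced Gaifman vertex $[v_i]_{\alpha_i}$ avoids~$t=\rho(v,\gamma)$: this is exactly the non-$t$ condition $[v_1]_\gamma\nsubseteq[v_i]_{\alpha_i}$, since by Lemma~\ref{le:2acycProps} and the definition of~$\rho$, membership $[v_i]_{\alpha_i}\in\rho(v,\gamma)$ is equivalent to $\alpha_i\supseteq\gamma$, which (using $[v_1]_\gamma\subseteq[v_i]_{\alpha_i}\Leftrightarrow\gamma\subseteq\alpha_i$ via Lemma~\ref{le:agentsAndClasses} after translating to a common representative) is precisely what non-$t$ forbids. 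The induced walk then stays in $d(V)\setminus t$, giving $d_t(\brck{v},\brck{u})\leq\ell-1$. For ``$\Leftarrow$'', given a minimal non-$t$ path in the restricted Gaifman graph from some coset in $\brck{v}\setminus t$ to some coset in $\brck{u}\setminus t$, I would reconstruct a coset path in~$G$: each Gaifman edge between cosets $[w]_\alpha$ and $[w]_\beta$ comes from a shared hyperedge $\brck{x}$, i.e.\ a vertex~$x$ lying in both cosets, so successive shared vertices give the carrier vertices $v_i$ and the labels $\alpha_i$ of a coset path, again with the length shifted by one.

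The hard part will be verifying the coset-path \emph{property} at every internal link of the reconstructed path, rather than merely producing a path in~$G$; a raw Gaifman walk only guarantees adjacency, not the disjointness condition $[v_i]_{\alpha_\im\cap\alpha_i}\cap[v_\ip]_{\alpha_i\cap\alpha_\ip}=\emptyset$. Here I expect to lean on minimality of the hypergraph path together with the machinery of Section~\ref{sec:shortPaths}: a violation of the coset property at an internal link would, via Lemma~\ref{le:cosetCutPath}, force a nonempty triple intersection $[v_\im]_{\alpha_\im}\cap[v_i]_{\alpha_i}\cap[v_\ip]_{\alpha_\ip}$, which corresponds to a common Gaifman neighbour providing a shortcut (a chord) and contradicting minimality of the path in $d(G)\restr(d(V)\setminus t)$. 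The sufficient-acyclicity hypothesis is what lets me invoke the zipper lemma (Lemma~\ref{le:zipper}) and Corollary~\ref{cor:shortIsNontrivial} to guarantee that a minimal non-$t$ coset path is short enough to be treated as an inner path, so that the relevant cosets genuinely sit inside $[v]_{\gen(v,u)}$ and the triple-intersection arguments apply without the interference of long, ``outside'' detours ruled out by Corollary~\ref{cor:longOnTheOutside}.

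Finally I would confirm that the two constructions are mutually inverse up to length, so that minimal non-$t$ coset paths of length~$\ell$ correspond exactly to minimal non-$t$ hypergraph paths of length~$\ell-1$, yielding the claimed equivalence $d_t(v,u)=\ell\Leftrightarrow d_t(\brck{v},\brck{u})=\ell-1$. The base case $\ell=1$ is handled separately by the remark following Definition~\ref{def:tDistance}: $d_t(v,u)=1$ iff $[v]_{\gen(v,u)}\notin t$, which on the dual side means $\brck{v}\setminus t$ and $\brck{u}\setminus t$ already share the vertex $[v]_{\gen(v,u)}$, giving hypergraph distance~$0$.
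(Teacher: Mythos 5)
Your proposal follows essentially the same route as the paper: the paper proves Proposition~\ref{prop:twoDistances} via Lemmas~\ref{le:chordlessToCoset} and~\ref{le:cosetToChordless}, which set up exactly your two-way translation between non-$t$ coset paths of length~$\ell$ and chordless paths in $d(G)\setminus t$ with the length shifted by one, using the cosets $[v_i]_{\alpha_i}$ as Gaifman vertices, the hyperedges $\brck{v_i}$ as adjacency witnesses, and the triple-intersection identity of Lemma~\ref{le:cosetCutPath} to identify violations of the coset property with chords. Your reliance on minimality of the hypergraph path and on Lemma~\ref{le:cyclicZipper}-type collapses for the degenerate cases matches the paper's use of chordlessness and sufficient acyclicity, so the plan is sound and not a genuinely different argument.
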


We give a formal proof in Section~\ref{sec:equivalencePaths}.
But first, we have a closer look at short non-$t$
coset paths and generalise some concepts from the previous
section about short coset paths.

\subsubsection{Short non-$t$ coset paths}\label{sec:shortNonTPaths}

In this section, we combine the notions of the set $\sh(v,u)$
and non-$t$ coset paths to obtain the parametrize operator
$\sh_t(\cdot,\cdot)$. It describes the direction one
has to take if one wants to move on a short non-$t$
coset path from~$v$ to~$u$, if such a path exists.

The zipper lemma implies the existence of $\sh(v,u)$.
For short non-$t$ coset paths
we need a specialized version of this operator.
As a reminder: $\alpha\subseteq E$ is a first
edge set for the pair of vertices $(v,u)$ if there
is a short coset path from~$v$ to~$u$ that starts
with an $\alpha$-edge.

\begin{definition}\label{def:shortT}
 Let~$G$ be a 2-acyclic \ca graph, $v,u\in G$
 and $\gamma\subseteq\gen(v,u)$ a set of generators.
 For $t=\rho(v,\gamma)$, we define the set of generators
 $\sh_t(v,z)$
 as the intersection of all the first generator sets of short
 non-$t$ coset paths from~$v$ to~$u$.
\end{definition}

In the definition of $\sh_t(v,u)$ we considered
a certain subset of all short coset paths
from~$v$ to~$u$.
If there are no such
paths, then this subset is
empty and $\sh_t(v,u)$ is not defined.
However, if there are short non-$t$ coset
paths $v,\alpha,\dots,u$ and $v,\beta,\dots,u$,
then there is a short
coset path $v,\alpha\cap\beta,\dots,u$
by Corollary~\ref{cor:edgeCut}, which
is also non-$t$ because $[v]_\gamma\nsubseteq [v]_\alpha$
and $[v]_\gamma\nsubseteq [v]_\beta$
imply $[v]_\gamma\nsubseteq [v]_{\alpha\cap\beta}$.
Thus, $\sh_t(v,u)$ is well-defined if short non-$t$
coset paths from~$v$ to~$u$ exist.

We continue with investigating the properties
of $\sh_t(v,u)$. This set behaves in a controlled and
intuitive manner in sufficiently acyclic graphs.
As $\sh_t(v,u)$ describes the direction of short
non-$t$ coset paths from~$v$ to~$u$, it changes
as one would expect if one moves to a neighbour~$v'$
of~$v$ via some $a$-edge: the direction for short
non-$t$ coset paths from~$v'$ to~$u$ necessarily
includes the generator~$a$.

\begin{lemma}\label{le:stepAwayT}
 Let $m\in\N$,
 $G$ be a \ca graph,
 $v,u$ two vertices,
 $\gamma\subseteq\gen(v,u)$ and
 $t=\rho(v,\gamma)$.
 Assume~$G$ is $2m+1$-acyclic,
 $d_t(v,u)\leq m$, and that there is
 $a\notin\sh_t(v,u)$
 such that $d_t(va,u)\leq m$,
 then $a\in\sh_t(va,u).$
\end{lemma}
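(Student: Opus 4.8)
The plan is to argue by contradiction: assume $a\notin\sh_t(va,u)$ and manufacture a short cyclic coset path at~$v$, contradicting Lemma~\ref{le:cyclicZipper}. Since $d_t(va,u)\leq m$ guarantees short non-$t$ coset paths from~$va$ to~$u$, and since (by the non-$t$ analogue of Corollary~\ref{cor:edgeCut}, already used to justify Definition~\ref{def:shortT}) their first generator sets are closed under intersection, the set $\sh_t(va,u)$ is itself realised: there is a short non-$t$ coset path
\[
 P\colon\ va,\alpha_1,v_2,\dots,\alpha_\ell,u
\]
with $\alpha_1=\sh_t(va,u)$ and $\ell\leq m$. By the contradiction hypothesis $a\notin\alpha_1$.

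First I would prepend the single $a$-edge to obtain
\[
 P'\colon\ v,\{a\},va,\alpha_1,v_2,\dots,\alpha_\ell,u
\]
of length $\ell+1\leq m+1$, and verify that $P'$ is again a non-$t$ coset path. The coset property at the new junction~$va$ reads $[va]_{\{a\}\cap\alpha_1}\cap[v_2]_{\alpha_1\cap\alpha_2}=\{va\}\cap[v_2]_{\alpha_1\cap\alpha_2}$, which is empty because $a\notin\alpha_1$ and because $P$ already satisfies its coset property at its start; the property at~$v$ is immediate from $v\neq va$. For $P'$ to be non-$t$ only the one new link $[v]_{\{a\}}$ must avoid $t=\rho(v,\gamma)$, i.e.\ $[v]_\gamma\nsubseteq[v]_{\{a\}}$, which by Lemma~\ref{le:agentsAndClasses} is equivalent to $\gamma\nsubseteq\{a\}$. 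Here $d_t(v,u)\leq m$ forces $\gamma\neq\emptyset$, since a non-$t$ coset path out of~$v$ has a first link containing~$v$, and such a link can never avoid $t$ when $\gamma=\emptyset$ (as then $[v]_\gamma=\{v\}$ sits inside every coset at~$v$). Thus for $a\notin\gamma$ we get $\gamma\nsubseteq\{a\}$, so $P'$ is a short non-$t$ coset path from~$v$ to~$u$ whose first generator set is $\{a\}$.

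Next I would take a short non-$t$ coset path $Q\colon v,\delta_1,r_2,\dots,\delta_k,u$ from~$v$ to~$u$ realising $\delta_1=\sh_t(v,u)$, with $k\leq m$; by hypothesis $a\notin\delta_1$. I then compare $P'$ and $Q$, two coset paths from~$v$ to~$u$, at their common start~$v$. Because $a$ lies in neither first generator set, $\delta_1\cap\{a\}=\{a\}\cap\alpha_1=\emptyset$, so the overlap expression at~$v$ collapses to a singleton intersection,
\[
 [v]_{\delta_1\cap\{a\}}\cap[va]_{\{a\}\cap\alpha_1}=\{v\}\cap\{va\}=\emptyset,
\]
while its companion is empty by the coset property of~$Q$ at~$v$; hence there is \emph{no} overlap at~$v$. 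The combined length of $P'$ and~$Q$ is at most $(m+1)+m=2m+1$, exactly the degree of acyclicity assumed — this is the role of the ``$+1$'' in $2m+1$-acyclicity, which absorbs the edge gained by prepending. Concatenating $P'$ with the reversal of~$Q$ thus gives a closed coset walk of length $\leq 2m+1$ at~$v$ whose coset cycle property already holds at~$v$. Either this walk is already a cyclic coset path — contradicting Lemma~\ref{le:cyclicZipper} — or it has an overlap at the far endpoint~$u$, in which case the standard zipping step (Lemma~\ref{le:cosetCutPath}, exactly as in the proof of Lemma~\ref{le:zipper}) yields a strictly shorter closed coset walk with the same non-overlapping $v$-end; iterating reaches a cyclic coset path of length $\leq 2m+1$, again contradicting Lemma~\ref{le:cyclicZipper}. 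Therefore $a\in\sh_t(va,u)$.

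The main obstacle I anticipate is the bookkeeping around the extra edge. One has to ensure both that prepending keeps the path non-$t$ — which is precisely where $\gamma\neq\emptyset$ (forced by $d_t(v,u)\leq m$) and $a\notin\gamma$ enter, and where the degenerate case $\gamma=\{a\}$, in which $[v]_{\{a\}}\in t$, must be excluded or handled apart — and that the total length stays within the $2m+1$ budget so that Lemma~\ref{le:cyclicZipper} still applies. Once the non-overlap at~$v$ is established, the contradiction is structurally the same collapse as in Lemmas~\ref{le:cyclicZipper} and~\ref{le:zipper}.
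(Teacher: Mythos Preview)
Your approach is essentially the same as the paper's: assume $a\notin\sh_t(va,u)$, take short non-$t$ coset paths realising $\sh_t(v,u)$ and $\sh_t(va,u)$, splice them together via the single $a$-edge between $v$ and $va$ into a closed walk of length at most $2m+1$, and invoke Lemma~\ref{le:cyclicZipper}. The difference lies in where you anchor the closed walk. The paper takes both paths as running \emph{from~$u$} (to $v$ and to $va$ respectively) and concatenates them via the $a$-edge in the middle, so the resulting sequence is a closed walk \emph{based at~$u$}. Because the coset-path convention sets the boundary labels $\alpha_0=\alpha_{\text{last}+1}=\emptyset$, the coset property at~$u$ is then automatic from the endpoint conditions of the two original paths, and the three new conditions at $v,\{a\},va$ collapse to singleton intersections exactly as you computed. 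This yields a bona fide cyclic coset path immediately, with no further work.

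By contrast you base the closed walk at~$v$, which pushes the meeting point~$u$ into the interior. There the coset property is \emph{not} inherited from~$P$ and~$Q$ (those only give $u\notin[v_\ell]_{\alpha_{\ell-1}\cap\alpha_\ell}$ and $u\notin[r_k]_{\delta_{k-1}\cap\delta_k}$, not the stronger conditions with $\alpha_\ell\cap\delta_k$), so you are forced into the iterative zipping you sketch. That does work, but it amounts to re-running the induction of Lemma~\ref{le:cyclicZipper} inside this proof; simply rotating the basepoint to~$u$ avoids it entirely. Also, your verification that $P'$ is non-$t$ (and the attendant worry about $\gamma=\{a\}$) is unnecessary: Lemma~\ref{le:cyclicZipper} applies to arbitrary cyclic coset paths, not only non-$t$ ones, so once $P'$ is a coset path you may proceed directly.
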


\begin{proof}
 Let $\ell,k\leq m$, and
 $w_1,\alpha_1,\dots,\alpha_\ell,w_\ellp$ and
 $z_1,\beta_1,\dots,\beta_k,z_{k+1}$
 be two coset paths that avoid~$t$ with
 \begin{itemize}
  \item $w_1=z_1=u$, $w_\ellp=v$, $z_{k+1}=v'$, and
  \item $\alpha_\ell=\sh_t(v,u)$, $\beta_k=\sh_t(v',u)$.
 \end{itemize}
 Such paths exist by choice of~$v,u$ and~$v'$ and
 Definition~\ref{def:shortT}.
 If we assume $a\notin\sh_t(v',u)$, then $a\notin\alpha_\ell\cup\beta_k$.
 Together with
 $w_\ellp\notin[w_\ell]_{\alpha_\ellm\cap\alpha_\ell}$,
 $z_\kp\notin[z_k]_{\beta_\km\cap\beta_k}$ and $w_\ellp\neq z_{k+1}$
 this implies
 \begin{itemize}
  \item $[w_\ell]_{\alpha_\ellm\cap\alpha_\ell} \cap [w_\ellp]_{\alpha_\ell\cap\{a\}} = \emptyset$,
  \item $[w_\ellp]_{\alpha_\ell\cap\{a\}} \cap [z_{k+1}]_{\{a\}\cap\beta_k} = \emptyset$, and
  \item $[z_{k+1}]_{\{a\}\cap\beta_k} \cap [z_k]_{\beta_k\cap\beta_\km} = \emptyset$.
 \end{itemize}
 Hence,
 \[
  w_1,\alpha_1,w_2,\dots,w_\ell,\alpha_\ell,w_\ellp,a,z_\kp,\beta_k,z_k,\dots,z_2,\beta_1,z_1
 \]
 is a coset path of length $\ell+k+1\leq 2m+1$ from~$u$ to~$u$,
 which cannot exist by Lemma~\ref{le:cyclicZipper} in a
 $2m+1$-acyclic \ca graph.
\end{proof}

If we choose $\gamma=\gen(v,u)$ in the lemma above,
we obtain this special case:

\begin{corollary}\label{cor:stepAway}
 Let $m\in\N$,
 $G$ be a \ca graph and
 $v,u$ two vertices.
 Assume~$G$ is $2m+1$-acyclic,
 $d(v,u)\leq m$, and that there is $a\notin\sh(v,u)$
 such that $d(va,u)\leq m$, then $a\in\sh(va,u)$.
\end{corollary}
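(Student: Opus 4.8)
The plan is to derive Corollary~\ref{cor:stepAway} as the special case of Lemma~\ref{le:stepAwayT} obtained by setting $\gamma = \gen(v,u)$, as the surrounding text already suggests. First I would observe that with this choice we have $t = \rho(v,\gen(v,u)) = \brck{v}\cap\brck{u}$, the intersection of the two dual hyperedges, and that under this choice the non-$t$ coset paths from~$v$ to~$u$ are exactly the non-trivial coset paths (as remarked after the definition of non-$t$ paths). Consequently the parametrized distance $d_t(v,u)$ specializes to the ordinary coset-path distance $d(v,u)$ of Definition~\ref{def:distance}, and the operator $\sh_t(v,u)$ specializes to $\sh(v,u)$ from Definition~\ref{def:short}. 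These identifications are the content of the remark immediately following Definition~\ref{def:tDistance} together with the discussion preceding Definition~\ref{def:shortT}.

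Once these specializations are in place, the hypotheses of the corollary translate directly into the hypotheses of the lemma. Specifically, $G$ being $2m+1$-acyclic is unchanged; the condition $d(v,u)\leq m$ becomes $d_t(v,u)\leq m$; the assumption $a\notin\sh(v,u)$ becomes $a\notin\sh_t(v,u)$; and $d(va,u)\leq m$ becomes $d_t(va,u)\leq m$. Here the one point needing a word of care is that the corollary writes $va$ where the lemma writes $v'=va$, so I would simply set $v' := v\circ a$ and note that $a\notin\gen(v,u)$ is implicit in $a\notin\sh(v,u)\subseteq\gen(v,u)$ is \emph{not} what is needed---rather, the relevant fact is that $va\neq v$, which holds because $a$ is a non-trivial involution. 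Applying Lemma~\ref{le:stepAwayT} then yields $a\in\sh_t(va,u)=\sh(va,u)$, which is exactly the conclusion.

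The only genuinely substantive step, and hence the one I would dwell on, is verifying that the choice $\gamma=\gen(v,u)$ is admissible for Lemma~\ref{le:stepAwayT}, whose statement requires $\gamma\subseteq\gen(v,u)$. This is immediate since $\gen(v,u)\subseteq\gen(v,u)$, but it is worth stating because it is the hinge that lets the general lemma collapse to the concrete corollary. Everything else is a matter of unwinding notation: the heavy lifting---constructing the length-$(\ell+k+1)$ cyclic coset path from~$u$ to~$u$ and invoking Lemma~\ref{le:cyclicZipper} to rule it out---has already been carried out in the proof of the lemma, so no combinatorial work remains. I therefore expect the corollary to admit a one- or two-line proof that merely records the substitution $\gamma=\gen(v,u)$ and cites Lemma~\ref{le:stepAwayT}, with the main (modest) obstacle being the bookkeeping that confirms the specialized distance and $\sh$-operator indeed match those of Section~\ref{sec:shortPaths}.
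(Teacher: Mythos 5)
Your proposal matches the paper exactly: the corollary is stated there with the one-line justification ``If we choose $\gamma=\gen(v,u)$ in the lemma above, we obtain this special case,'' i.e.\ it is derived precisely as the $\gamma=\gen(v,u)$ instance of Lemma~\ref{le:stepAwayT}, with $d_t$ collapsing to $d$ and $\sh_t$ to $\sh$ as you describe. Your additional bookkeeping (checking $\gamma\subseteq\gen(v,u)$, $va\neq v$, and the identification of non-$t$ with non-trivial paths) only makes explicit what the paper leaves implicit.
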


\subsubsection{Duality of paths}\label{sec:equivalencePaths}

In Section~\ref{sec:distance}, we claimed that
$d_t(v,u)$ and $d_t(\brck{v},\brck{u})$ are equivalent
(Proposition~\ref{prop:twoDistances})
although they are based on two seemingly very different
kinds of paths. In this section,
Lemmas~\ref{le:chordlessToCoset} and~\ref{le:cosetToChordless}
show a correspondence between non-$t$ coset paths and chordless
paths in $d(G)\setminus t$.
The former states that minimal paths in
$d(G)\setminus t$ induce non-$t$ coset paths.

\begin{lemma}\label{le:chordlessToCoset}
 Let~$G$ be a 2-acyclic \ca graph, $v_1\neq v_\ellp$ two vertices,~$\gamma$
 a set of generators and $t=\rho(v_\ellp,\gamma)$.
 Then a chordless path of length~$\ellp\geq 2$
 \[
  [v_1]_{\emptyset},\brck{v_1},[v_2]_{\alpha_1},\brck{v_2},[v_3]_{\alpha_2},\dots,[v_\ellp]_{\alpha_\ell},\brck{v_\ellp},[v_\ellp]_{\emptyset}
 \]
 in~$d(G) \setminus t$ from~$[v_1]_{\emptyset}$ to~$[v_\ellp]_{\emptyset}$
 induces a non-$t$ coset path
 \[
  v_1,\alpha_1,v_2,\dots,v_{\ell},\alpha_\ell,v_\ellp
 \]
 of length~$\ell$ in~$G$.
\end{lemma}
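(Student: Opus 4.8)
The plan is to read the candidate coset path off the hypergraph path and to check its three defining features in turn: the labelled (edge) condition $[v_i]_{\alpha_i}=[v_\ip]_{\alpha_i}$, the non-$t$ condition, and the coset cycle property at each link. Label the Gaifman vertices of the chordless path $x_0=[v_1]_\emptyset$, $x_i=[v_\ip]_{\alpha_i}$ for $1\le i\le\ell$, and $x_{\ell+1}=[v_\ellp]_\emptyset$, so that $\brck{v_k}$ witnesses the edge $x_{k-1}x_k$. Since both endpoints of that edge lie in $\brck{v_k}$, the only nontrivial membership $v_k\in x_k$ forces $[v_k]_{\alpha_k}=[v_\kp]_{\alpha_k}$ for $1\le k\le\ell$; this is exactly the labelled-path condition, and it identifies each interior path vertex with a link of the walk, $x_i=[v_\ip]_{\alpha_i}=[v_i]_{\alpha_i}$. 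Distinctness of the $v_i$ (needed for a genuine labelled path) also follows from chordlessness: $v_i=v_j$ with $i<j$ would place two non-adjacent vertices among $x_{i-1},x_i,x_{j-1},x_j$ into the single hyperedge $\brck{v_i}=\brck{v_j}$, a chord.

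For the non-$t$ condition, recall that vertices of $d(G)$ carry their defining generator set as a colour, so $[v_i]_{\alpha_i}\in\rho(v_\ellp,\gamma)$ holds precisely when $v_\ellp\in[v_i]_{\alpha_i}$ and $\gamma\subseteq\alpha_i$; by Lemma~\ref{le:agentsAndClasses} this is in turn equivalent to $[v_\ellp]_\gamma\subseteq[v_i]_{\alpha_i}$. As every link $[v_i]_{\alpha_i}=x_i$ is a vertex of the path and the path lives in $d(G)\setminus t$ with $t=\rho(v_\ellp,\gamma)$, we have $[v_i]_{\alpha_i}\notin t$, hence $[v_\ellp]_\gamma\nsubseteq[v_i]_{\alpha_i}$ for all $i$; this is the non-$t$ condition.

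The coset cycle property $[v_i]_{\alpha_\im\cap\alpha_i}\cap[v_\ip]_{\alpha_i\cap\alpha_\ip}=\emptyset$ (for $1\le i\le\ell$, with $\alpha_0=\alpha_{\ell+1}=\emptyset$) is where chordlessness really does the work. For $2\le i\le\ell$ I would apply Lemma~\ref{le:cosetCutPath} (whose proof needs only the edge condition just established) to rewrite the left-hand side as $[v_\im]_{\alpha_\im}\cap[v_i]_{\alpha_i}\cap[v_\ip]_{\alpha_\ip}$; any witness $w$ of non-emptiness lies in both $x_\im=[v_\im]_{\alpha_\im}$ and $x_\ip=[v_\ip]_{\alpha_\ip}$, so $\brck{w}$ joins the non-adjacent path vertices $x_\im$ and $x_\ip$---a chord, contradiction (at $i=\ell$ one has $\alpha_{\ell+1}=\emptyset$, so $w=v_\ellp$ and the chord is $x_{\ell-1}x_{\ell+1}$). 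For the remaining index $i=1$ I would argue directly: the intersection is $\{v_1\}\cap[v_2]_{\alpha_1\cap\alpha_2}$, and a witness forces $v_1\in[v_2]_{\alpha_2}=x_2$ while trivially $v_1\in x_0$, giving the chord $x_0x_2$; the degenerate case $\ell=1$ is immediate from $v_1\neq v_\ellp$.

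I expect the coset cycle property to be the only real obstacle, and the difficulty there is bookkeeping rather than ideas: one must keep the reindexing between the links $[v_i]_{\alpha_i}$ and the path vertices $x_i$ straight, and in each case confirm that the two cosets joined by the offending hyperedge are genuinely distinct and non-adjacent along the path, so that the resulting chord truly contradicts chordlessness. Once all three features are verified, the displayed walk is a non-$t$ coset path of length $\ell$, as claimed.
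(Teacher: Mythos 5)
Your proposal is correct and follows essentially the same route as the paper's proof: read the coset path off the hypergraph path, get the edge condition from $v_i\in[v_{i+1}]_{\alpha_i}$, get non-$t$ from the path living in $d(G)\setminus t$, and turn any violation of the coset cycle property into a chord via the three-way intersection rewriting (Lemma~\ref{le:cosetCutPath}, which the paper applies implicitly). Your treatment is somewhat more careful on the bookkeeping (distinctness of the $v_i$, the colouring argument for the non-$t$ equivalence, the $i=\ell$ case), but the ideas coincide.
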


\begin{proof}
 Since $[v_\ip]_{\alpha_i}\in\brck{v_{i}}$ implies $v_{i}\in[v_\ip]_{\alpha_i}$, for all $1\leq i\leq\ell$,
 \[ v_1,\alpha_1,v_2,\dots,v_{\ell},\alpha_\ell,v_\ellp \] is a path in~$G$.
 First, we need to prove that it is also a coset path.
 If there is a vertex
 \[
  u\in[v_1]_{\emptyset\cap\alpha_1}\cap[v_2]_{\alpha_1\cap\alpha_2}
     =\{v_1\}\cap[v_1]_{\alpha_1}\cap[v_2]_{\alpha_2}
     =\{v_1\}\cap[v_1]_{\alpha_1}\cap[v_3]_{\alpha_2},
 \]
 then $u=v_1$ and $[v_3]_{\alpha_2}\in\brck{v_1}$, which implies
 that~$\brck{v_1}$ is a chord that connects~$[v_1]_\emptyset$
 and~$[v_3]_{\alpha_2}$; this cannot be because we assumed that
 the path is chordless. Analogously, one proves
 $[v_\ell]_{\alpha_{\ellm}\cap\alpha_\ell}\cap[v_\ellp]_{\alpha_\ell\cap\emptyset}=\emptyset$.
 If there is an $1< i\leq\ell$ and some vertex
  \[
   u\in[v_i]_{\alpha_{i-1}\cap\alpha_i}\cap[v_{i+1}]_{\alpha_i\cap\alpha_{i+1}}
   =[v_{i-1}]_{\alpha_{i-1}}\cap[v_i]_{\alpha_i}\cap[v_{i+1}]_{\alpha_{i+1}},
  \]
 then $[v_{i-1}]_{\alpha_{i-1}},[v_{i+1}]_{\alpha_{i+1}}\in\brck{v}$,
 which makes~$\brck{u}$ a chord for the path in~$d(G)$, contradicting chordlessness again.
 Second, the coset path is also non-$t$ because, for all $1 \leq i \leq \ell$,
 $$ [v_\ip]_{\alpha_i} \notin t \quad \Leftrightarrow \quad
    [v_\ellp]_\gamma \nsubseteq [v_\ip]_{\alpha_i}.
 $$
\end{proof}

Lemma~\ref{le:cosetToChordless} states the converse direction:
a minimal non-$t$ coset path in a \ca graph~$G$
induces a chordless path in~$d(G)\setminus t$.

\begin{lemma}\label{le:cosetToChordless}
 Let $\ell\geq 1$,~$G$ be a sufficiently acyclic \ca graph
 $v_1,v_\ellp$ two
 vertices, $\gamma\subseteq\gen(v_1,v_\ellp)$ a
 set of generators and $t=\rho(v_\ellp,\gamma)$.
 A non-$t$ coset path of length $\ell\geq 1$
 \[
  v_1,\alpha_1,v_2,\dots,v_{\ell},\alpha_\ell,v_{\ell+1}
 \]
 induces a chordless path of length~$\ell+1$
 \[
  [v_1]_{\emptyset},\brck{v_1},[v_2]_{\alpha_1},\brck{v_2},[v_3]_{\alpha_2},\dots,[v_\ellp]_{\alpha_\ell},\brck{v_\ellp},[v_\ellp]_{\emptyset}
 \]
 in~$d(G) \setminus t$.
\end{lemma}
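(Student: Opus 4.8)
The plan is to check, in order, that the displayed sequence is a genuine path in the Gaifman graph of $d(G)\setminus t$, that all of its coset-vertices avoid $t$, and finally that it is chordless; its length is then $\ell+1$ by inspection. Write $p_0=[v_1]_\emptyset$, $p_i=[v_{i+1}]_{\alpha_i}=[v_i]_{\alpha_i}$ for $1\le i\le\ell$, and $p_{\ell+1}=[v_{\ell+1}]_\emptyset$, using the convention $\alpha_0=\alpha_\ellp=\emptyset$ of Definition~\ref{def:cosetPath}. First I would observe that consecutive cosets $p_{i-1},p_i$ both contain~$v_i$, so they are linked by the hyperedge~$\brck{v_i}$; hence the sequence is a walk. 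That all vertices lie in $d(V[G])\setminus t$ is the next point: for the interior links this is exactly the non-$t$ condition $[v_\ellp]_\gamma\nsubseteq[v_i]_{\alpha_i}$, which is equivalent to $p_i\notin t$; for the endpoints, $\{v_1\}=[v_\ellp]_\beta$ is impossible since $v_1\neq v_\ellp$, while $\{v_\ellp\}=[v_\ellp]_\beta$ forces $\beta=\emptyset$, so $p_\ellp\in t$ would require $\gamma=\emptyset$, a case that is vacuous because then the last link $p_\ell$ already violates the non-$t$ condition. Consecutive vertices are distinct: $p_{i-1}=p_i$ would give $\alpha_{i-1}=\alpha_i$ by Lemma~\ref{le:agentsAndClasses} and hence contradict the coset-path property at~$v_i$, and at the ends $p_0,p_\ellp$ are singletons while $\alpha_1,\alpha_\ell\neq\emptyset$.

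The heart of the argument is chordlessness, which I would prove by contradiction via the cyclic zipper lemma (Lemma~\ref{le:cyclicZipper}). A chord is a common group element $w\in p_a\cap p_b$ for non-consecutive indices $b\ge a+2$; among all chords I would choose one with $b-a$ minimal. Since $w\in[v_{a+1}]_{\alpha_a}$ and $w\in[v_b]_{\alpha_b}$, grafting the witnessing hyperedge $\brck{w}$ onto the arc of the dual path from $p_a$ to $p_b$ and translating back to~$G$ along the shared cosets yields the closed sequence
\[
 w,\alpha_a,v_{a+1},\alpha_{a+1},\dots,v_b,\alpha_b,w,
\]
which starts and ends at~$w$ and has length at most $\ell+1$. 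As $G$ is sufficiently acyclic, i.e.\ $n$-acyclic for some $n\ge\ell+1$, once this sequence is shown to be a coset path (passing to a minimal sub-loop if $w$ happens to coincide with an interior vertex) Lemma~\ref{le:cyclicZipper} is contradicted, so no chord can exist. When $a=0$ (resp.\ $b=\ell+1$) the convention $\alpha_0=\emptyset$ forces $w=v_1$ (resp.\ $w=v_\ellp$), the corresponding first (resp.\ last) step degenerates and is dropped, and $a=0,\,b=\ell+1$ cannot co-occur since $v_1\neq v_\ellp$.

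What remains — and this is the step I expect to be the main obstacle — is to verify the coset-path property at the two junction vertices $w$ and $v_b$ (and symmetrically $v_{a+1}$); at the interior vertices $v_{a+2},\dots,v_{b-1}$ it is inherited verbatim from the given path. Using the cut characterisation of $2$-acyclicity (Lemma~\ref{le:cutChar}) and $w\in[v_a]_{\alpha_a}\cap[v_b]_{\alpha_b}$, the junction requirements $w\notin[v_{a+1}]_{\alpha_a\cap\alpha_{a+1}}$ and $w\notin[v_b]_{\alpha_{b-1}\cap\alpha_b}$ reduce, via Lemma~\ref{le:cosetCutPath}, to the emptiness of the triple intersections $p_a\cap p_{a+1}\cap p_b$ and $p_a\cap p_{b-1}\cap p_b$. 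For $b=a+2$ these are precisely the coset-path property at~$v_{a+1}$; for $b\ge a+3$ a non-empty such triple would exhibit a chord of strictly smaller span (namely $p_{a+1}\cap p_b$ or $p_a\cap p_{b-1}$), contradicting the minimal choice of $b-a$. At the $\emptyset$-padded endpoints one of the two conditions drops out and the surviving one coincides with the original coset-path property at~$v_1$ or at~$v_\ell$, so the same reasoning closes those cases. This produces the required cyclic coset path, the contradiction with Lemma~\ref{le:cyclicZipper}, and hence the chordlessness of the induced path.
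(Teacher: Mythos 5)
Your proof is correct and follows essentially the same route as the paper: verify the sequence is a path avoiding $t$ via the non-$t$ condition, then derive chordlessness by contradiction, turning a chord into a short cyclic coset path that Lemma~\ref{le:cyclicZipper} forbids. You merely fill in details the paper leaves implicit — the minimal-span choice of chord to certify the coset-path property at the junctions, and the edge cases ($\gamma=\emptyset$, endpoint cosets, distinctness of consecutive vertices) — all of which check out.
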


\begin{proof}
 For all $1\leq i\leq\ell$, $v_{i}\in [v_\ip]_{\alpha_i}$
 implies $[v_\ip]_{\alpha_i}\in\brck{v_{i}}$, hence
 \[ [v_1]_{\emptyset},\brck{v_1},[v_2]_{\alpha_1},\brck{v_2},\dots,\brck{v_\ellp},[v_\ellp]_{\emptyset} \]
 is indeed a path in~$d(G)$.
 Furthermore, the coset path is non-$t$ because,
 for all $1 \leq i \leq \ell$,
 $[v_\ip]_{\alpha_i} \notin t$ if and only if
 $[v_\ellp]_\gamma \nsubseteq [v_\ip]_{\alpha_i}.$
 It remains to show that the path is chordless.
 
 Assume there is a chord, i.e.\ a hyperedge $\brck{v}\subseteq d(G)$
 that contains two vertices of the path in~$d(G)$ that have at
 least distance~2 on the path. Set~$\alpha_0=\emptyset$.
 If $\brck{v}$ contains~$[v_\ellp]_\emptyset$ and some vertex
 $[v_i]_{\alpha_\im}$, for $1\leq i\leq\ell$, then $v=v_\ellp$
 and $v_\ellp\in[v_i]_{\alpha_\im}$; this implies a short cyclic
 coset path from~$v_\ellp$ to~$v_\ellp$, which cannot exists
 in sufficiently acyclic \ca graphs by Lemma~\ref{le:cyclicZipper}.
 Otherwise, $\brck{v}$ contains two vertices $[v_i]_{\alpha_\im},[v_j]_{\alpha_\jm}$,
 for some $1\leq i,j\leq \ellp$ with $j>i+1$. Then
 $v\in[v_i]_{\alpha_\im}$ and $v\in[v_j]_{\alpha_\jm}$.
 The case $j=i+2$ and $v\in[v_i]_{\alpha_\im\cap\alpha_i}\cap[v_\ip]_{\alpha_i\cap\alpha_\ip}$
 (keep in mind that $[v_{i+2}]_{\alpha_\ip}=[v_{i+1}]_{\alpha_\ip}$)
 violates the coset cycle property. In any other case, we can find
 again a short cyclic coset path from~$v$ to itself.
\end{proof}

If a coset path is denoted as $v_1,\alpha_1,v_2,\dots,v_{\ell+1}$,
as in the lemma above, then $[v_\ip]_{\alpha_i}=[v_i]_{\alpha_i}$,
for all $1\leq i\leq\ell$.
Additionally, removing the first and last edge from a chordless
path does not change that it is chordless.

\begin{corollary}\label{cor:cosetToChordless}
 Let $\ell\geq 1$,~$G$ be a sufficiently acyclic \ca graph
 $v_1,v_\ellp$ two
 vertices, $\gamma\subseteq\gen(v_1,v_\ellp)$ a
 set of generators and $t=\rho(v_\ellp,\gamma)$.
 A non-$t$ coset path of length $\ell\geq 1$
 \[
  v_1,\alpha_1,v_2,\dots,v_{\ell},\alpha_\ell,v_{\ell+1}
 \]
 induces a chordless path of length~$\ell-1$
 \[
  [v_1]_{\alpha_1},\brck{v_2},[v_2]_{\alpha_2},\brck{v_3},\dots,\brck{v_\ell},[v_\ell]_{\alpha_\ell}
 \]
 in~$d(G) \setminus t$.
\end{corollary}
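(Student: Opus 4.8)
The plan is to read this off Lemma~\ref{le:cosetToChordless} directly: the corollary is just that lemma with the two outermost edges of the induced chordless path trimmed away. First I would apply Lemma~\ref{le:cosetToChordless} to the given non-$t$ coset path $v_1,\alpha_1,v_2,\dots,\alpha_\ell,v_\ellp$, which yields the chordless path of length $\ell+1$
\[
 [v_1]_{\emptyset},\brck{v_1},[v_2]_{\alpha_1},\brck{v_2},[v_3]_{\alpha_2},\dots,[v_\ellp]_{\alpha_\ell},\brck{v_\ellp},[v_\ellp]_{\emptyset}
\]
in $d(G)\setminus t$.

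Next I would rewrite the interior vertices using the identity recorded just before the corollary: since $v_i^{-1}v_\ip\in G_{\alpha_i}$, we have $[v_\ip]_{\alpha_i}=[v_i]_{\alpha_i}$ for all $1\leq i\leq\ell$. Under this identification the $i$-th interior vertex $[v_\ip]_{\alpha_i}$ becomes $[v_i]_{\alpha_i}$, and the hyperedge joining consecutive interior vertices $[v_i]_{\alpha_i}$ and $[v_\ip]_{\alpha_\ip}$ is $\brck{v_\ip}$, because $v_\ip$ lies in both of these cosets. Thus the chordless path reads
\[
 [v_1]_{\emptyset},\brck{v_1},[v_1]_{\alpha_1},\brck{v_2},[v_2]_{\alpha_2},\dots,\brck{v_\ell},[v_\ell]_{\alpha_\ell},\brck{v_\ellp},[v_\ellp]_{\emptyset}.
\]

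Finally I would delete the two endpoint vertices $[v_1]_{\emptyset}$ and $[v_\ellp]_{\emptyset}$, which discards the first edge $\brck{v_1}$ and the last edge $\brck{v_\ellp}$. The remaining contiguous segment is exactly
\[
 [v_1]_{\alpha_1},\brck{v_2},[v_2]_{\alpha_2},\brck{v_3},\dots,\brck{v_\ell},[v_\ell]_{\alpha_\ell},
\]
which has $\ell$ vertices, hence length $\ell-1$. Because any chord of a contiguous subpath is also a chord of the whole path, chordlessness is inherited, so this segment is a chordless path of length $\ell-1$ in $d(G)\setminus t$, as required.

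I expect no genuine obstacle here beyond careful label bookkeeping; the only two points that need checking are the coset identity $[v_\ip]_{\alpha_i}=[v_i]_{\alpha_i}$ used to relabel the interior vertices and the standard fact that chordlessness passes to contiguous subpaths. It is also worth spending one line on the degenerate case $\ell=1$: there the asserted path has length~$0$ and reduces to the single vertex $[v_1]_{\alpha_1}$, which is vacuously chordless.
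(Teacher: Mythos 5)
Your proposal is correct and matches the paper's own justification exactly: the paper derives the corollary from Lemma~\ref{le:cosetToChordless} by the same two observations, namely that $[v_\ip]_{\alpha_i}=[v_i]_{\alpha_i}$ lets one relabel the interior vertices, and that removing the first and last edge of a chordless path preserves chordlessness. Your extra remark on the degenerate case $\ell=1$ is a harmless bonus.
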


We can combine Proposition~\ref{prop:twoDistances} with the zipper lemma
and its implications to obtain a way to verify that the distance between
two vertices in a \ca graph or the distance between two hyperedges in its
dual hypergraph is long by looking only at a inner non-$t$ coset paths.
\begin{lemma}
 Let $\ell \geq 1$, $G$ be a sufficiently acyclic \ca graph, $v\neq u$
 two vertices, $\gamma\subseteq E$ and $t=\rho(v,\gamma)$.
 If there is no inner non-$t$ coset path from~$v$ to~$u$
 of length $\leq\ell$, then
 $$d_t(v,u)>\ell \quad \mbox{and} \quad d_t(\brck{v},\brck{u})>\ell -1. $$
\end{lemma}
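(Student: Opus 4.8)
The plan is to establish the first inequality $d_t(v,u) > \ell$ on its own and then read off the second one, $d_t(\brck{v},\brck{u}) > \ell-1$, from Proposition~\ref{prop:twoDistances}. I would work in the meaningful regime $\gamma \subseteq \gen(v,u)$, as in Lemma~\ref{le:cosetToChordless} and Corollary~\ref{cor:cosetToChordless}: this forces $[v]_{\gen(v,u)} \in t$ and hence (cf.\ the remarks after Definition~\ref{def:tDistance}) rules out any non-$t$ coset path of length~$1$. Consequently every non-$t$ coset path from~$v$ to~$u$ has length at least~$2$, which is the non-$t$ analogue of the fact, used in Lemma~\ref{le:noShortInner}, that non-trivial coset paths have length $\geq 2$.

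For $d_t(v,u) > \ell$ I would argue by contradiction, mirroring the proof of Lemma~\ref{le:noShortInner}. Assuming $d_t(v,u) \leq \ell$, there is a non-$t$ coset path from~$v$ to~$u$ of some length $k$ with $2 \leq k \leq \ell$. Since $G$ is sufficiently acyclic this path is short, so I would apply Corollary~\ref{cor:shortIsNontrivial} with $\alpha := \gen(v,u)$ to replace each label $\alpha_i$ by $\alpha_i \cap \alpha$ and each intermediate vertex by a suitable $v_i'$, obtaining an \emph{inner} coset path of the same length $k \leq \ell$ between the same endpoints~$v$ and~$u$.

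The one point that needs checking — and the only genuine obstacle — is that this inner path is still non-$t$. Here I expect the key observation to be that, under $\gamma \subseteq \gen(v,u)$, being inner is actually \emph{stronger} than being non-$t$. Indeed, inner-ness gives $[v_i']_{\alpha_i \cap \alpha} \subsetneq [v]_\alpha$ for every $i$, and $\gamma \subseteq \alpha$ together with Lemma~\ref{le:agentsAndClasses} yields $[v]_\alpha \subseteq [v]_\gamma$; chaining these shows $[v_i']_{\alpha_i\cap\alpha} \subsetneq [v]_\gamma$, so $[v]_\gamma \nsubseteq [v_i']_{\alpha_i\cap\alpha}$, which is precisely the non-$t$ condition. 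This produces an inner non-$t$ coset path of length $\leq \ell$, contradicting the hypothesis, and hence $d_t(v,u) > \ell$. The boundary case $\ell = 1$ needs no appeal to Corollary~\ref{cor:shortIsNontrivial}: the hypothesis is then vacuous, and $d_t(v,u) > 1$ already follows from the length-$\geq 2$ observation.

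Finally, I would invoke Proposition~\ref{prop:twoDistances}, which asserts $d_t(v,u) = j \Leftrightarrow d_t(\brck{v},\brck{u}) = j-1$ for all $j \geq 1$, i.e.\ $d_t(\brck{v},\brck{u}) = d_t(v,u) - 1$. Thus $d_t(v,u) > \ell$ translates immediately into $d_t(\brck{v},\brck{u}) > \ell - 1$, completing the argument.
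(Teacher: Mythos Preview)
Your overall plan mirrors the paper's proof exactly: assume $d_t(v,u)\leq\ell$, pick a short non-$t$ coset path, apply Corollary~\ref{cor:shortIsNontrivial} to obtain an inner path of the same length, verify that this inner path is still non-$t$, derive a contradiction, and then invoke Proposition~\ref{prop:twoDistances} for the hypergraph inequality. So structurally you are on target.

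The gap is in the step where you argue that the inner path is non-$t$. You claim that ``$\gamma\subseteq\alpha$ together with Lemma~\ref{le:agentsAndClasses} yields $[v]_\alpha\subseteq[v]_\gamma$'', but that inclusion is the wrong way round: Lemma~\ref{le:agentsAndClasses} says $\gamma\subseteq\alpha \Leftrightarrow [v]_\gamma\subseteq[v]_\alpha$. With the corrected inclusion your chain collapses, and in fact the assertion ``inner implies non-$t$'' is false for $\gamma\subsetneq\gen(v,u)$: the first link $[v]_{\alpha_1\cap\alpha}$ of any inner path contains $v$, so whenever $\gamma\subseteq\alpha_1\cap\alpha$ one gets $[v]_\gamma\subseteq[v]_{\alpha_1\cap\alpha}$, violating non-$t$ even though the path is inner.

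The paper fixes this by not trying to derive non-$t$ from inner-ness alone; instead it carries the non-$t$ property over from the \emph{original} path. The new link cosets satisfy $[v_i']_{\alpha_i\cap\alpha}=[v_i]_{\alpha_i\cap\alpha}\subseteq[v_i]_{\alpha_i}$ (because $v_i'\in[v_i]_{\alpha_{i-1}\cap\alpha_i}\subseteq[v_i]_{\alpha_i}$ and $\alpha_i\cap\alpha\subseteq\alpha_i$), and the original path already gives $[v]_\gamma\nsubseteq[v_i]_{\alpha_i}$; hence $[v]_\gamma\nsubseteq[v_i']_{\alpha_i\cap\alpha}$. Replace your inner-implies-non-$t$ paragraph with this monotonicity argument and the proof goes through.
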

\begin{proof}
 Assume $d_t(v,u)=k\leq\ell$, and let
 $ v_1,\alpha_1,v_2,\dots,v_k,\alpha_k,v_{k+1} $,
 with $v_1=v$ and $v_{k+1}=u$, be a non-$t$ coset path.
 Since~$G$ is sufficiently acyclic, this path is short.
 Hence,
 Corollary~\ref{cor:shortIsNontrivial}
 implies there are $v_i'\in[v_i]_{\alpha_i\cap\alpha}$,
 for $\alpha=\gen(v,u)$ and $1<i\leq k$, such that
 \[
  v_1,\alpha'_1,v_2',\alpha'_2,v_3',\dots,v_k',\alpha'_k,v_{k+1},
 \]
 for $\alpha'_i=\alpha_i\cap\alpha$, $1\leq i\leq k$,
 is a short inner coset path.
 This inner coset path is also non-$t$
 because $[v_\ellp]_{\gamma}\nsubseteq[v_i]_{\alpha_i}$ and
 $[v_i]_{\alpha_i\cap\alpha}=[v_i']_{\alpha_i\cap\alpha}\subseteq[v_i]_{\alpha_i\cap\alpha}$
 imply $[v_\ellp]_{\gamma}\nsubseteq [v_i']_{\alpha_i\cap\alpha}$.
 However, we assumed that such inner paths do not exist.
 Thus, $d_t(v,u)>\ell$ and by Corollary~\ref{prop:twoDistances}
 also $d_t(\brck{v},\brck{u}) > \ellm$.
\end{proof}

\section*{Conclusion}

This work provides a general toolbox for dealing with
the highly intricate overlap patterns of cosets in Cayley graphs
without short coset cycles.
These patterns are extremely dense, yet their highly regular structure
allows us to invoke notions of locality at multiple scales.
The overlap patterns are analysed in terms of related structures,
with a focus on the duality between \ca graphs and their associated dual hypergraphs.

We present several characterisations of local tree-likeness in \ca
structures, like the zipper lemma or regarding coset acyclic \ca graphs
as the dual image of $\alpha$-acyclic hypergraphs, which are locally
tree-decomposable. The zipper lemma gives us further insight into
the structure of coset paths on every level of granularity of
the coset overlap pattern. The duality between \ca graphs
and associated hypergraphs allows us to translate between coset paths
in \ca graphs and chordless graphs in the dual hypergraph.
Thus, we can translate problems in $n$-acyclic
\ca graphs to problems in $n$-acyclic hypergraphs and use
well-known results about $\alpha$-acyclicity to solve these problems.
Conversely,
we know how certain model-theoretic constructions on \ca graphs
impact their dual hypergraphs.
So far such techniques were successfully
applied in~\cite{CO17}, \cite{Ca18}, \cite{CO21} to characterise the
expressive power of Common Knowledge logic in certain classes of
Kripke structures that are based on \ca graphs.
This work makes these techniques accessible for a wider range of
applications.

\newpage

\bibliographystyle{abbrv}
\bibliography{S5}

\end{document}